\documentclass{article}

\usepackage{macros}         

\usepackage{verbatim}
\usepackage{soul}
\usepackage{authblk}

\title{Asynchronous decentralized successive convex approximation}
\author[$\dag$]{Ye Tian}
\author[$\dag$]{Ying Sun}
\author[$\dag$]{Gesualdo Scutari}
\affil[$\dag$]{School of Industrial Engineering, Purdue University}

\begin{document}

\maketitle

\begin{abstract}
We study decentralized asynchronous multiagent optimization over networks, modeled as   directed graphs.  The optimization problem consists of minimizing a (nonconvex) smooth function--the sum of the agents' local costs--plus a convex (nonsmooth) regularizer, subject to convex constraints. 		Agents can perform their local computations as well as communicate  with their immediate neighbors at any time, without any form of coordination or centralized scheduling; furthermore,   when solving their local subproblems, they  can use outdated information from their neighbors.
		  We propose the first  distributed algorithm, termed ASY-DSCA, working in such a general asynchronous scenario and applicable to constrained, composite optimization. When the objective function is nonconvex, ASY-DSCA is proved to converge to a stationary solution of the problem at a sublinear rate.
When the problem is convex and satisfies the Luo-Tseng error bound condition, ASY-DSCA converges at an R-linear rate to the optimal solution.  Luo-Tseng (LT) condition is   weaker than   strong convexity of the objective function, and it is satisfied by several nonstrongly convex functions arising from machine learning applications; examples include LASSO and logistic regression problems.    ASY-DSCA is the first distributed algorithm provably achieving linear rate for such a class of problems. \vspace{-0.1cm}
\end{abstract}

\section{Introduction} 
	We consider the following general class of (possibly nonconvex) multiagent \emph{composite} optimization:
	\begin{equation}
		\tag{P}
		\min_{x\in\mathcal{K}} U(x)  \triangleq  \sum_{i\in [I]} f_i(x) +G(x),\vspace{-0.2cm}
		\label{eq:problem}
	\end{equation}
	where $[I] \triangleq \until{I}$ is the set of agents in the system,  $f_i:\mathbb{R}^n\to \mathbb{R}$ is the   cost function of agent $i$, assumed to be smooth but possibly nonconvex; $G:\mathbb{R}^n\to \mathbb{R}$ is  convex  possibly nonsmooth; and $\mathcal{K}\subseteq \mathbb{R}^n$ is a closed convex set.  Each agent has access only to its own objective $f_i$ but not the sum    $ \sum_{i=1}^I f_i$ while $G$ and $\mathcal K$ are common to all the agents.

Problem~\eqref{eq:problem} has found a wide range of applications in machine learning, particularly in supervised learning; examples   include logistic regression, SVM and LASSO, and deep learning.
In these problems,  each $f_i$ is the empirical risk that measures the mismatch  between the model (parameterized by $x$) to be learnt,   and the data set owned \emph{only} by agent $i$.  
 $G$ and $\KK$ plays the role of regularization that restricts the solution space to promote some favorable structure, such as sparsity. 

	Classic distributed learning typically subsumes a master-slave computational architecture wherein the master nodes run  the optimization algorithm gathering the needed information from the workers (cf. Fig.~\ref{fig:comp_centra_decen}-left panel).
	In contrast, in this paper, we consider a decentralized computational architecture,  modeled as a general directed graph that lacks a central controller/master node (see Fig.~\ref{fig:comp_centra_decen}-right panel).
	 Each node can only communicate with its intermediate neighbors. This setting arises naturally when   data are acquired and/or stored     at the node sides.  Examples include resource allocation, swarm robotic control, and multi-agent reinforcement learning \cite{littman1994markov, zhang2018fully}.  Furthermore,  in scenarios where both  architectures are available,  decentralized learning   has the advantage of being robust to single point failures and being communication efficient.  For instance, \cite{lian2017can} compared the performance of  stochastic gradient descent on both architectures; they show that, the two implementations have similar total computational complexity,   while the maximal communication cost per node of the algorithm   running on the decentralized architecture is $\mathcal{O}(\text{degree of network})$, significantly smaller than the $\mathcal{O}(I)$ of the same scheme running on a master-slave system.

\begin{figure}
	\centering{\includegraphics[width=0.6\textwidth]{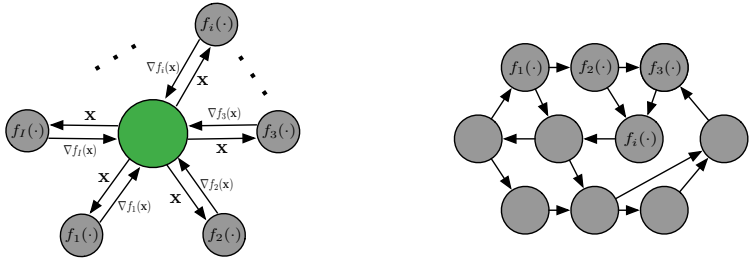}}\vspace{-0.1cm}
	\caption{\small Master-slave   (left panel) vs.  decentralized  (right panel) architectures.}
	\label{fig:comp_centra_decen}\vspace{-0.6cm}
\end{figure}
	As the problem and  network size scale, synchronizing the entire  multiagent system becomes inefficient or infeasible.
	Synchronous schedules require a global clock, which is against the gist of removing the central controller as in  decentralized optimization.
	This calls for the development of {\it asynchronous} decentralized learning algorithms.  In addition, asynchronous modus operandi  brings also  benefits such as   mitigating communication and/or memory-access congestion, saving resources (e.g., energy, computation, bandwidth), and making algorithms more fault-tolerant.  Therefore,  asynchronous decentralized algorithms have the potential to prevail in large scale learning problems. In this paper, we consider the following general decentralized asynchronous setting: \begin{description}\item[(i)]  Agents can perform their local computations as well as communicate (possibly in parallel) with their immediate neighbors at any time, without any form of coordination or centralized scheduling; and
\item[(ii)] when solving their local subproblems, they  can use outdated information from their neighbors, subject to  arbitrary but bounded delays. \end{description} We are not aware of any provably convergence scheme applicable to the envisioned decentralized asynchronous setting and Problem \eqref{eq:problem}--specifically in the presence of constraints or the nonsmooth term $G$--see Sec. \ref{sec:related-works} for a discussion of  related works. This paper fills exactly this gap.
 \vspace{-0.3cm}
\subsection{Main contributions}  \vspace{-0.1cm} Our major  contributions are summarized  next.

	\noindent\textbf{$\bullet$ Algorithmic design:} We introduce \ASYSCA/, the first distributed asynchronous algorithm  [in the sense (i) and (ii) above]   applicable to the \emph{composite, constrained} optimization \eqref{eq:problem}.    \ASYSCA/ builds on successive convex approximation techniques (SCA) \cite{facchinei2015parallel,Scutari_Ying_LectureNote,scutari_PartI,scutari_PartII}--agents solve   strongly convex approximations of \eqref{eq:problem}--coupled with a suitably defined perturbed push-sum mechanism that is robust against asynchrony, whose goal is to track locally and asynchronously the average of agents' gradients. No specific
activation mechanism for the agents' updates, coordination,
or communication protocol is assumed, but only some mild
conditions ensuring that information used in the updates does
not become infinitely old. We remark that SCA offers a unified umbrella to deal efficiently with convex and nonconvex problems \cite{facchinei2015parallel,Scutari_Ying_LectureNote,scutari_PartI,scutari_PartII}: for several problems (P) of practical interest (cf. Sec. \ref{sec:applications}), a proper choice of the  agents' surrogate functions to minimize leads to subproblems that admit a closed form solution (e.g., soft-thresholding and/or projection to the Euclidean ball).  \ASYSCA/ generalizes ASY-SONATA, proposed in the companion paper \cite{Tian_arxiv},  by i) enabling SCA models in the agents' local updates;  and ii) enlarging the class of optimization problems to include constraints and nonsmooth (convex) objectives.

	\noindent\textbf{$\bullet$ Convergence rate:}  Our convergence results are the following:  \textbf{i)} For  general  nonconvex $F$ in \eqref{eq:problem}, a sublinear rate is established for a suitably defined merit function  measuring  both distance of the (average) iterates from stationary solutions and  consensus disagrement; ii) When \eqref{eq:problem} satisfies  the Luo-Tseng (LT) error bound condition~\cite{luo1993error}, we establish   {\it R-linear} convergence   of the sequence generated by  \ASYSCA/ to an optimal solution. Notice that the LT condition is  weaker than strong convexity, which is the common assumption used in the literature to establish linear convergence of distributed (even synchronous) algorithms.   Our interest in the LT condition  is motivated by the fact that
several popular   objective functions  arising from machine learning applications are nonstrongly convex but satisfy the LT error bound; examples include  popular empirical losses in   high-dimensional statistics such as   quadratic  and logistic  losses--see Sec.\ref{sec:eb} for more details.  \ASYSCA/ is the first {\it asynchronous distributed}  algorithm with provably linear rate for such a class of problems over networks;  this result is new even in the synchronous distributed setting.

\noindent\textbf{$\bullet$ New line of analysis: } We put forth  novel  convergence proofs,   whose main novelties are  highlighted next.

\subsubsection{New Lyapunov function for descent}  Our convergence analysis  consists in carefully analyzing the interaction among the consensus, the gradient tracking and the nonconvex-nonsmooth-constrained optimization processes {\it in the asynchronous environment}.   This interaction can be seen as a perturbation that each of these processes induces on
  the dynamics of the others. The challenge is proving that the perturbation generated by one system on the others is of a
 sufficiently small order (with respect to suitably defined metrics), so that convergence can be established and a convergence rate of suitably defined quantities be derived.  Current techniques  from  centralized (nonsmooth) SCA optimization methods \cite{facchinei2015parallel,Scutari_Ying_LectureNote,scutari_PartI,scutari_PartII}, error-bound analysis \cite{luo1993error},  and (asynchronous) consensus algorithms, alone or   brute-forcely put together, do not provide a satisfactory  answer: they   would generate  ``too large'' perturbation errors and do not exploit the interactions among different processes. On the other hand, existing approaches proposed for distributed algorithms are not applicable too (see Sec. \ref{sec:related-works} for a detailed review of the state of the art):  they can neither  deal with asynchrony (e.g., \cite{sun2019convergence}) or be applicable to optimization problems  with a nonsmooth function in the objective and/or constraints.

 To cope with the above challenges our analysis builds on two new Lyapunov functions, one for nonconvex instances of \eqref{eq:problem} and one for convex ones. These functions are carefully crafted to  combine objective  value dynamics  with consensus and gradient errors while accounting for asynchrony and outdated information in the agents' updates. Apart from the specific expression of these functions, a major novelty here is the use in the  Lyapunov functions  of weighting vectors that  {\it endogenously vary  based upon the asynchrony trajectory of the algorithm}--see Sec.\ref{sec:pf_1} (Step 2) and Sec.\ref{sec:pf_2}  (Remark \ref{rmk:Lyapunov})  for   technical details.
  The descent property of the Lyapunov functions is the key step to prove that consensus and tracking errors vanish and further establish the desired converge rate of valid optimality/stationarity measures. 
 \subsubsection{Linear rate under the LT condition}   The proof of linear convergence of \ASYSCA/ under the LT condition is a new contribution of this work. Existing proofs establishing linear rate of distributed synchronous and asynchronous algorithms \cite{shi2015extra,Nedich-geometric,qu2017harnessing,sun2019convergence,alghunaim2019decentralized}   (including   our companion paper \cite{Tian_arxiv}) are not applicable here, as they all leverage  strong convexity of $F$, a property that we do not assume.
 On the other hand,  existing techniques  showing linear rate of {\it centralized} first-order methods under the LT condition  \cite{luo1993error,tseng1991rate}  do not  customize to our {\it distributed}, asynchronous setting. Roughly speaking, this is mainly due to the fact that use of the LT condition in \cite{luo1993error,tseng1991rate}   is subject to proving descent on the objective function along the algorithm iterates, a property that can no longer be guaranteed in the distributed setting, due to the perturbations generated by the consensus and the gradient tracking errors.   Asynchrony complicates further the analysis, as it induces  unbalanced updating frequency of agents and the presence of the outdated information in agents'  local computation. Our proof of linear convergence leverages  the  descent property of the proposed Lyapunov function to be able to invoke the LT condition in our distributed, asynchronous  setting (see  Sec.\ref{sec:pf_1}  for a technical discussion on this matter). \vspace{-0.1cm}
\vspace{-0.3cm}

\subsection{Related works}\label{sec:related-works} \vspace{-0.1cm}\noindent\textbf{On the asynchronous model:} The literature on asynchronous methods is vast; based upon  agents' activation rules and assumptions on   delays,  existing algorithms   can be roughly grouped in    three categories. \textbf{1)} Algorithms  in \cite{wang2015cooperative,li2016distributed,Tsianos:ef,Tsianos:en,lin2016distributed,doan2017impact}  tolerate delayed information but require synchronization among agents, thus fail to meet the asynchronous requirement (i) above.  \textbf{2)} On the other hand, schemes   in \cite{NotarnicolaNotarstefanoTAC17,xu2017convergence,iutzeler2013asynchronous,wei20131,bianchi2016coordinate,hendrikx2019asynchronous,hendrikx2018accelerated} accounts for agents' random (thus uncoordinated) activation; however, upon activation, they must use the most updated information from their neighbors, i.e., no delays are allowed;  hence, they fail to meet requirement (ii).  \textbf{3)}  Asynchronous activations and delays are considered in ~\cite{nedic2011asynchronous,zhao2015asynchronous,kumar2017asynchronous, peng2016arock,wu2016decentralized} and \cite{eisen2017decentralized,bof2017newton,Tian_arxiv,olshevsky2018robust,assran2018asynchronous}, with the former (resp. latter) schemes employing random (resp. deterministic) activations. 
Some restrictions on the form of delays are imposed. Specifically,   \cite{nedic2011asynchronous,zhao2015asynchronous,kumar2017asynchronous,bof2017newton} can only tolerate packet losses (either the information gets lost or is received with no delay);  \cite{assran2018asynchronous}   handles only communication delays (eventually all the transmitted information is received by the intended agent); and \cite{peng2016arock,wu2016decentralized} assume that the agents' activation and delay as independent random variables, which is not  realistic and hard to enforce in practice~\cite{cannelli2016asynchronous}.

The only schemes  we are aware of that are compliant with the asynchronous model    (i) and (ii) are those in \cite{Tian_arxiv,olshevsky2018robust}; however, they are applicable only to  {\it smooth unconstrained} problems.	Furthermore, all  the aforementioned algorithms  but \cite{kumar2017asynchronous,Tian_arxiv} are designed only for {\it convex} objectives $U$.  

\noindent\textbf{On the convergence rate:} Referring to  convergence rate guarantees,  none of the aforementioned  methods is proved to converge linearly in the {\it asynchronous} setting and  when applied to {\it  nonsmooth constrained} problems in the form \eqref{eq:problem}. Furthermore, even restricting the focus to {\it synchronous} distributed methods or   smooth unconstrained instances of (P),  we are not aware of any distributed scheme that provably achieves linear rate without requiring $U$ to be strongly convex; 
we refer to~\cite{sun2019convergence} for a recent literature review of synchronous distributed schemes  belonging to this class.
In the centralized setting, linear  rate can be proved for first order methods under the assumption that  $U$  satisfies some error bound conditions, which are weaker than strongly convexity; see,  e.g.,~\cite{zhang2013linear,bolte2017error,luo1993error,karimi2016linear}. A natural question is whether  such results  can be extended  to (asynchronous) decentralized methods. This paper   provides a positive answer to this open question. \vspace{-0.3cm}

\subsection{Notation}
 The $i$-th vector of the standard basis in $\mathbb{R}^n$ is denoted by $e_i$;  {$x_i$ is the $i$-th entry of a vector $x$; and $A_i$ denotes the $i$-th row of a matrix $A$. Given two matrices (vectors) $A$ and $B$ of same size,  by $A\preccurlyeq B$ we mean  that $B-A$   is a nonnegative matrix (vector).  We do not differentiate between a vector and its transpose when it is the argument of a function/mapping.} The vector of all ones is denoted by  $\mathbf{1}$ (its dimension will be clear from the context).  We use $\norm{\cdot}$ to denote the  Frobenius norm when the argument is   a matrix and the Euclidean norm when applied to a vector;    $\norm{\cdot}_2$ denotes the spectral norm of a matrix. Given $G:\mathbb{R}^n\to \mathbb{R}$,   the proximal mapping  
 is defined as $\text{prox}_{G} (x) \triangleq \argmin_{y\in \mathcal{K}}G(y) + \frac{1}{2} \norm{y-x}_2^2$. Let $\mathcal{K}^*$ denote the set of stationary solutions of \eqref{eq:problem}, and $ \text{dist}(x,\KK^*) \triangleq \min_{y\in \mathcal{K}^*} \|x - y\|$.\vspace{-0.2cm}

	\section{Problem setup}\label{sec:Problem_setup}

	We study Problem~\eqref{eq:problem} under the following  assumptions.
	\begin{assumption}[On Problem~\eqref{eq:problem}] The following hold: 

			(i) The set $\mathcal{K} \subset \mathbb{R}^n$ is nonempty, closed, and convex;

			(ii) Each $f_i:\mathcal{O} \rightarrow \mathbb{R}$  is proper, closed and $l$-smooth, where $\mathcal{O} \supset \mathcal{K}$ is open; $F$ is $L$-smooth  with $L\triangleq I\cdot l$; 

			(iii) $G:\mathcal{K} \to \mathbb{R}$ is convex but possibly nonsmooth;  \text{ and} \quad  (iv) $U$ is lower bounded on $\mathcal{K}$.

		\label{ass:cost_functions}
	\end{assumption}\vspace{-0.1cm}
	Note that each $f_i$ need not  be convex, and  	each agent $i$ knows only its own  $f_i$  but not $\sum_{j\neq i} f_j$. The regularizer $G$ and the  constraint set $\KK$ are common knowledge to all agents.

	To  solve Problem~\eqref{eq:problem},  agents need to leverage message exchanging over the network.
		The communication network of the agents is  modeled as a  fixed, directed
		graph $\GG \triangleq (\mathcal{V},\EE)$.  $\mathcal{V} \triangleq [I]$ is the set of nodes (agents), and $\EE\subseteq \mathcal{V} \times \mathcal{V}$
		is the set of edges ({communication} links). If  $(i,j)\in \EE$, it means that agent $i$
		can send information to agent $j$. We assume that the digraph does not have self-loops.
		We denote by $\nbrs_i^{in}$ the set of \emph{in-neighbors} of agent $i$, i.e.,  
		$\nbrs_i^{in} \triangleq \left\{j \in \mathcal{V} \mid (j,i) \in \EE \right\}$ while   $\nbrs_i^{out} \triangleq \left\{j \in \mathcal{V} \mid (i,j) \in \EE \right\}$
		is the   set of its \emph{out-neighbors}. 
		The following  assumption on the graph connectivity is standard.\vspace{-0.1cm} 
		\begin{assumption}
			The graph $\GG$ is strongly connected.
			\label{ass:strong_conn}\vspace{-0.3cm}
		\end{assumption}
\subsection{Case study: Collaborative supervised learning}\vspace{-0.1cm}\label{sec:applications}
A timely   application of the described decentralized setting and optimization Problem~\eqref{eq:problem} is  collaborative supervised learning. Consider a training data set $\{\left( u_s, y_s\right)\}_{s \in \mathcal{D}} $, where
 $u_s$ is the input feature vector and $y_s$ is the outcome associated to item $s.$  In the envisioned decentralized setting,  data $\mathcal{D}$ are partitioned into $I$ subsets $\{\mathcal{D}_i\}_{i \in [I]}$, each of which belongs to an agent $i\in [I]$.
 The goal is to  learn a mapping $p(\cdot \,; x)$ parameterized by $x \in \mathbb{R}^n$ using   \emph{all} samples in $\mathcal{D}$ by solving  $
 \min_{x \in \mathcal{K}} {1}/{| \mathcal{D}|}\sum_{s \in \mathcal{D}} \ell\left( p(u_s; x), y_s \right) + G(x),
 $ wherein $\ell$ is a loss function  that measures the mismatch between  $ p(u_s; x)$ and $ y_s$; and $G$ and $\KK$ play the role of  regularizing the solution. This problem is an instance of~\eqref{eq:problem} with $f_i(x) \triangleq  {1}/{| \mathcal{D}|} \sum_{s \in \mathcal{D}_i} \ell\left( p(u_s; x), y_s \right)$.
Specific examples of loss functions and regularizers are give next.
 \begin{enumerate}\item[1)] \textbf{ Elastic net regularization for log linear models:}
  $\ell\left( p(u_s; x), y_s \right) \triangleq \Phi (u_s^\top x) - y_s \cdot (u_s^\top x)$ with $\Phi$ convex, $u_s \in \mathbb{R}^n$ and $y_s \in \mathbb{R}$; $G(x) \triangleq  \lambda_1 \norm{x}_1 + \lambda_2 \norm{x}_2^2$ is the elastic net regularizer, which reduces to the LASSO regularizer when $(\lambda_1, \lambda_2) = (\lambda,0)$ or the ridge regression regularizer when $(\lambda_1, \lambda_2) = (0,\lambda)$;

  \item[2)] \textbf{Sparse group LASSO \cite{friedman2010note}:}  The loss function is the same as that in example 1), with $\Phi(t) = t^2/2$; $G(x) = \sum_{S\in \mathcal{J}} w_S \norm{x_S}_2 + \lambda \norm{x}_1$, where $\mathcal{J}$ is a partition of  $[n]$;

\item[3)] \textbf{ Logistic regression:}  $\ell\left( p(u_s; x), y_s \right) \triangleq  \text{ln} ( 1+e^{-y_s \cdot u_s^\top x})$; popular choices of $G(x)$ are $G(x) \triangleq \lambda \norm{x}_1$ or $G(x) \triangleq \lambda \norm{x}_2^2$. The constraint set $\mathcal{K}$ is generally assumed to be bounded. 
\end{enumerate}

For large scale data sets,  solving such learning problems is computationally challenging even if $F$ is convex.  When the problem dimension  $n$ is  larger than the sample size $ |\mathcal{D}|$,
 the Hessian of the  empirical risk   loss $F$ is typically rank deficient and hence  $F$ is not strongly convex. Since linear convergence rate for decentralized methods is established in the literature only under strong convexity, it is  unclear whether such a  fast rate can be achieved under less restrictive conditions,  e.g.,  embracing popular high-dimensional learning problems as those mentioned above. We show next that a positive  answer to this question can be obtained leveraging  the renowned LT error bound, a condition that has been wide explored in the  literature of centralized optimization methods. \vspace{-0.3cm}



\subsection{The Luo-Tseng error bound}\label{sec:eb}

\begin{assumption}(Error-bound conditions \cite{Dembo1984local,pang1986inexact,luo1993error}):
\begin{description}
	\item [(i)] $F$ is convex;

		\item[(ii)] For any $\eta \!> \!\inf_{x \in \mathcal{K}} \!U(x)$, there exists $\epsilon,\kappa > 0$ such that:\vspace{-0.1cm}
		 \begin{align}\label{eq:LT_condition} \hspace{-0.5cm} U(x) \leq \eta \quad \text{and}\quad \norm{x - \text{prox}_G(x - \nabla F(x))} \leq \epsilon  \vspace{-0.5cm}\end{align}\vspace{-0.5cm}
		$$\Downarrow\vspace{-0.4cm}$$
		\begin{align}\label{eq:LT_effect}   \text{dist}(x , \mathcal{K}^*) \leq \kappa \norm{x - \text{prox}_G(x - \nabla F(x))}.\end{align}\end{description}
	\label{ass:cost_functions_eb}\vspace{-0.2cm}
\end{assumption}
 Assumption~\ref{ass:cost_functions_eb}(ii) is a local growth condition on $U$   around $\KK^*$, crucial to  prove  linear rate.  Note that for convex $F$, condition \ref{ass:cost_functions_eb}(ii) is equivalent to other renowned error bound conditions, such as the Polyak-\L{}ojasiewicz \cite{polyak1964gradient,lojasiewicz1963topological}, the quadratic growth~\cite{drusvyatskiy2018error}, and the  Kurdyka-\L{}ojasiewicz~\cite{bolte2017error} conditions.
A broad class of functions satisfying Assumption~\ref{ass:cost_functions_eb} is in the form   $U(x) = F(x)+G(x)$, with $F$ and $G$ such that   (cf. \cite[Theorem~4]{tseng2009coordinate},  \cite[Theorem~1]{zhang2013linear}): 
	\begin{description}\item[(i)] $F(x)= h(A x)$ is $L$-smooth, where $h$ is strongly convex and $A$ is any linear operator;

	\item[(ii)] $G$ is either a polyhedral convex function (i.e., its epigraph is a polyhedral set) or has a specific separable form as  $G(x) = \sum_{S\in \mathcal{J}} w_S \norm{x_S}_2 + \lambda \norm{x}_1$, where $\mathcal{J}$ is a partition of the set $[n]$, and $\lambda$ and $w_S$'s are nonnegative weights (we used $x_S$ to denote the vector whose component $i$ is $x_i$ if $i\in S$, and $0$ otherwise);

	\item[(iii)] $U(x)$ is coercive.\end{description}

It follows that  all  examples listed in  Section~\ref{sec:applications} satisfy Assumption~\ref{ass:cost_functions_eb}. Hence,  the proposed  decentralized asynchronous algorithm, to be introduced, will provably achieve linear rate for such a general classes of problems.  \vspace{-0.3cm}

	\section{Algorithmic development}\label{sec:main} 
	Solving Problem~\eqref{eq:problem} over $\mathcal{G}$   poses  the following challenges: i) $U$ is nonconvex/nonsmooth; ii) each agent $i$ only knows its local loss  $f_i$ but not the global $F$; and iii) agents  perform updates in an asynchronous fashion. Furthermore, it is well established that, when $f_i$ are nonconvex (or convex only in some variable), using convex surrogates for $f_i$ in the agents' subproblems rather than just linearization (as in gradient algorithms) provides more flexibility in the algorithmic design and can enhance practical  convergence \cite{facchinei2015parallel,Scutari_Ying_LectureNote,scutari_PartI,scutari_PartII}. This motivated us to equip  our distributed asynchronous design with   SCA models.

	To address these challenges,
	we develop our algorithm building on   SONATA~\cite{YingMAPR,sun2019convergence}, as   to our knowledge it is the only  synchronous decentralized algorithm for \eqref{eq:problem}   capable to handle challenges i) and ii) and incorporating SCA techniques.   Moreover, when employing a constant step size, it converges linearly to the optimal solution of (P) when $F$  is strongly convex; and sublinearly to the set of stationary points of ~\eqref{eq:problem}, when $F$ is nonconvex.
	We begin briefly reviewing SONATA.\vspace{-0.3cm}


	%
	%
	%

	\subsection{Preliminaries: the  SONATA algorithm \cite{YingMAPR,sun2019convergence}}\label{sec:SONATA}
	Each agent $i$ maintains a local estimate $x_i$ of the common optimization vector $x$, to be updated at each iteration; the $k$-th iterate is denoted by $x_i^k$.
	 The specific procedure put forth by SONATA is given in Algorithm~\ref{alg:SONATA} and briefly described next.

	\begin{algorithm}[!ht]
		\caption{The SONATA Algorithm}
		\noindent \textbf{Data:} For all agent $i$ and $\forall j \in \mathcal{N}_i^{in}$, $x_i^0 \in \mathbb{R}^n$, ${z}_i^0 = y_i^0 = \nabla f_i(x_i^0)$, $\phi_i^0 = 1$. Set $k = 0$.
		\begin{algorithmic}
			\While{a termination criterion is not met, \emph{each agent $i \in [I]$}}
			\State \hspace{-0.3cm}(S.1) Local optimization:
			\begin{subequations}
				\begin{align}
					\begin{split}
			\label{eq:loc_opt}
 & \widetilde{x}_{i}^k     =   \underset{x\in \mathcal{K}}{\mathrm{argmin}}~\big\{\widehat{U}_{i} \big(x;x_{i}^k, I \,y_{i}^k - \nabla f_{i}(x_{i}^{k}) \big) \triangleq \\
 &  \quad  \widetilde{f}_i(x;x_i^k) +( I \,y_{i}^k - \nabla f_{i}(x_{i}^{k}))^\top \left(x - x_{i}^{k}\right) + G(x) \big\},
					\end{split}\\
					\begin{split}
						\label{eq:relaxation}	&  v_{i}^{k+1}  =   x_{i}^k + \gamma \left( \widetilde{x}_{i}^k - x_{i}^k\right).
					\end{split}
				\end{align}
			\end{subequations}
			\State \hspace{-0.3cm}(S.2) Consensus step:
			\vspace{-1ex}
			\begin{align}\label{eq:sonata_consensus}
				x_{i}^{k+1} = w_{ii}v_{i}^{k+1} + \sum_{j\in \mathcal{N}_{i}^{{in}}} w_{ij} v_{j}^{k+1}.
			\end{align}
			\State \hspace{-0.3cm}(S.3) Gradient tracking: \vspace{-1ex}
			\begin{align}\label{eq:sonata_tracking}
				\begin{split}
				&	z_i^{k+1}  = \sum_{j = 1}^I a_{ij} \left( z_j^k + \nabla f_j (x_j^{k+1}) - \nabla f_j(x_j^k) \right) ,\\
				&	\phi_i^{k+1}  =  \sum_{j = 1}^I a_{ij} \phi_j^k, \quad y_i^{k+1} = \frac{z_i^{k+1}}{\phi_i^{k+1}}.
				\end{split}
			\end{align}
			\State   $k\leftarrow k+1$
			\EndWhile
		\end{algorithmic}\label{alg:SONATA}
	\end{algorithm}

	\textbf{(S.1): Local optimization.}
	At each iteration $k$, every agent $i$ locally solves a strongly convex approximation of Problem~\eqref{eq:problem} at $x_{i}^k$, as given in~\eqref{eq:loc_opt},
	where $\widetilde{f}_i: \KK \times \KK \to \mathbb{R}$ is a so-called SCA surrogate of $f_i$, that is, satisfies  Assumption~\ref{assump:surrogate} below.   	The second term in~\eqref{eq:loc_opt}, ${(I y_{i}^k - \nabla f_i (x_i^k))}^\top \left(x - x_{i}^{k}\right)$, serves as a first order approximation of $\sum_{ j \neq i} f_j(x)$
	unknown to agent $i$, wherein
	$I y_{i}^k$  tracks the sum gradient $\sum_{j =1}^I \nabla f_j (x_i^k)$ (see step (S.3)). 	We then employ a relaxation step~\eqref{eq:relaxation} with step size $\gamma$.\vspace{-0.1cm}
	\begin{assumption}\label{assump:surrogate}
		 $\widetilde{f}_i: \KK \times \KK \to \mathbb{R}$ satisfies:\begin{description}
		 	\item[(i)] $\nabla \widetilde{f}_i(x;x) = \nabla f_i(x)$ for all $x\in\mathcal{K}$;

			\item[(ii)] $\widetilde{f}_i(\cdot; y)$ is uniformly strongly convex on $\mathcal{K}$ with constant $\widetilde{\mu}>0$;

			\item[(iii)] $\nabla \widetilde{f}_i(x;\cdot)$ is uniformly Lipschitz continuous on $\mathcal{K} $ with constant $\widetilde{l}$.
		 \end{description}
	\end{assumption}
The choice of $\widetilde{f}_i$ is quite flexible. For example, one can construct a proximal gradient type update~\eqref{eq:loc_opt} by linearizing $f_i$ and adding  a proximal term;   if $f_i$ is a DC function, $\widetilde{f}_i$ can retain the convex part of $f_i$ while linearizing the nonconvex part. We refer to~\cite{facchinei2015parallel,Scutari_Ying_LectureNote,scutari_PartI,scutari_PartII} for more details on the choices of $\widetilde{f}_i$, and Sec. \ref{sec:simu} for specific examples used in our experiments.



	\textbf{(S.2): Consensus.} This steps aims at enforcing consensus   on the local variables $x_i$ via    gossiping. Specifically, after the local optimization step, each agent $i$ performs a consensus update~\eqref{eq:sonata_consensus} with mixing matrix $W = (w_{ij})_{i,j=1}^I$ satisfying the following assumption.
	\begin{assumption}\label{assumption_weights} The weight matrices ${W}\triangleq (w_{ij})_{i,j=1}^I$ and ${A}\triangleq (a_{ij})_{i,j=1}^I$ satisfy (we will write ${M}\triangleq (m_{ij})_{i,j=1}^I$ to denote either ${A}$ or ${W}$ and $\mathbf{1} \in \mathbb{R}^I$ is a vector of all ones):\begin{description}
			\item[(i)] $\exists \, \bar{m}>0$ such that: $m_{ii} \geq \bar{m}$,  $\forall i \in \mathcal{V}$; $m_{ij} \geq \bar{m}$,  for all $(j,i) \in \mathcal{E}$;  and $m_{ij}=0$, otherwise;

		\item[(ii)] ${W}$ is row-stochastic, that is,  ${W}\,\mathbf{1}=\mathbf{1}$;  \text{ and }  iii) ${A}$ is column-stochastic, that is,  ${A}^\top\,\mathbf{1}=\mathbf{1}$.\end{description}
	\end{assumption}
	 Several choices for $W$ and $A$  are available; see, e.g.,  ~\cite{sayed2014adaptation}.
 Note that SONATA  uses a row-stochastic matrix $W$ for the consensus update and a  column-stochastic matrix $A$ for the gradient tracking. In fact, for general  digraph, a doubly stochastic matrix compliant with the graph might not exist 
 while one can always build compliant row or column stochastic matrices. These weights can be determined locally by the agents, e.g., once its in- and out-degree can be estimated.

	\textbf{(S.3): Gradient tracking.} This step updates $y_i$ by employing a perturbed push-sum algorithm with weight matrix $A$ satisfying Assumption~\ref{assumption_weights}.   This step aims to track the average
	gradient $(1/I) \sum_{i=1}^I \nabla f_i(x_i)$ via $y_i$.
		In fact, using the column stochasticity of $A$ and applying the telescopic cancellation, one can  check that the  following   holds:\vspace{-0.1cm}
\begin{equation}\vspace{-0.1cm}
 \label{eq:sum_preservation}
\sum_{i = 1}^I \phi_i^k  =\sum_{i = 1}^I \phi_i^0 = I, \quad  \sum_{i = 1}^I z_i^k   = \sum_{i=1}^{I} \nabla f_i(x_i^k). \vspace{-0.1cm}
 \end{equation}
 It can be shown that for all $i \in [I]$, $z_i^k$ and $\phi_i^k$ converges to $\xi_i^k \cdot \sum_{i=1}^{I} z_i^k$ and $\xi_i^k \cdot  \sum_{i = 1}^I \phi_i^k$, respectively, for some $\xi_i^k >0$~\cite{nedic2014distributed}. Hence,   $y_i^k = z_i^k / \phi_i^k$ converges to $(1/I) \sum_{i=1}^I \nabla f_i(x_i^k)$, employing the desired gradient tracking.

 Notice that the extension of the gradient tracking to the asynchronous setting is not trivial, as the ratio consensus property discussed above no longer holds if
agents naively  perform their updates  using in ~\eqref{eq:sonata_tracking} delayed information.
 In fact, 
 packets sent by an agent, corresponding to the summand in~\eqref{eq:sonata_tracking}, may get lost.
 This breaks the equalities in~\eqref{eq:sum_preservation}. Consequently, the ratio $y_i^k$ cannot correctly track the average gradient. 
 To cope with this issue, our approach is to replace step~(S.3)  by the asynchronous gradient tracking mechanism developed in~\cite{Tian_arxiv}.\vspace{-0.1cm} 
 \vspace{-0.2cm}

	\subsection{Asynchronous decentralized SCA  (\ASYSCA/)}\label{eq:asy_sca}
We now break the synchronism in SONATA and propose \ASYSCA/ (cf.   Algorithm~\ref{alg:ASYSCA}). All agents update asynchronously and continuously without coordination, possibly using delayed information from their neighbors.   
	\begin{algorithm}[!ht]
		\caption{The \ASYSCA/ Algorithm}
		\noindent \textbf{Data:} For all agent $i$ and $\forall j \in \mathcal{N}_i^{\text{in}}$, $x_i^0 \in \mathbb{R}^n$, ${z}_i^0 =y_i^0 = \nabla f_i(x_i^0)$, $\phi_i^0 = 1$,  {$\tilde{{\rho}}_{ij}^0 = 0$},  $ \tilde{\sigma}_{ij}^0 = 0$, $\tau_{i j}^{-1} = -D$.  And for $t = -D, -D+1, \ldots, 0$, ${\rho}_{ij}^t = 0$, $\sigma_{ij}^t = 0$, $v_i^t = 0$.  Set $k = 0$.
		\begin{algorithmic}
			\While{a termination criterion is not met}\smallskip

			\State  Pick: $\quad (i^k, {d}^k)$;\smallskip

			\State Set:
			$ \quad
			\tau_{i^k j}^k = \max(\tau_{i^k j}^{k-1}, k-d_j^k), \quad \forall j \in \mathcal{N}_{i^k}^{\text{in}};
			$\smallskip

			\State (S.1) Local optimization:
			\begin{align}\label{eq:sca}
				\begin{split}
					& \widetilde{x}_{i^k}^k     =    \underset{x\in \mathcal{K}}{\mathrm{argmin}} \quad \widehat{U}_{i^k} \left(\,x;\,x_{i^k}^k, \, I \,y_{i^k}^k - \nabla f_{i^k}(x_{i^k}^{k}) \right), \\
					&  v_{i^k}^{k+1}  =   x_{i^k}^k + \gamma \left( \widetilde{x}_{i^k}^k - x_{i^k}^k\right);
				\end{split}
			\end{align}
			\State (S.2) Consensus step (using delayed information): \\
			\begin{equation}  \label{eq:asy-consensus}
			\qquad x_{i^k}^{k+1} = w_{i^ki^k}v_{i^k}^{k+1} + \sum_{j\in \mathcal{N}_{i^k}^{\text{in}}} w_{{i^k}j} v_{j}^{\tau_{i^k j}^k};
			\end{equation}
			\State (S.3) Robust gradient tracking:
			\smallskip\begin{equation}\label{eq:asy-track}
			\begin{aligned}
				\qquad y_{i^k}^{k+1}&= \mathcal{F} \Big(i^k, \, k, \, ( \rho_{i^k j}^{\tau_{i^k j}^k} )_{j\in \mathcal{N}_{i^k}^{\text{in}}}, \, ( \sigma_{i^k j}^{\tau_{i^k j}^k} )_{j\in \mathcal{N}_{i^k}^{\text{in}}},\\
				&\quad  \qquad \nabla f_{i^k}(x_{i^k}^{k+1}) - \nabla f_{i^k}(x_{i^k}^{k}) \Big)
			\end{aligned}\end{equation}
			\State Untouched state variables shift to state $k+1$ while keeping the same value;    $k\leftarrow k+1.$
			\EndWhile
			\item[]

			\Procedure{\FuncName/}{$i, k, ( \rho_{i j} )_{j\in \mathcal{N}_{i}^{\text{in}}},( \sigma_{i j} )_{j\in \mathcal{N}_{i}^{\text{in}}}, {\epsilon} $}
			\State Sum step:\vspace{-0.1cm}
		\begin{equation}\label{eq:asy-sum-step}	\begin{aligned}
			& z_{i}^{k+\frac{1}{2}}  = z_{i}^{k}  +  \sum_{j \in \mathcal{N}_{i}^{\text{in}}}  \left(\rho_{{i}j}-\tilde\rho_{ij}^k\right) + {\epsilon},  \\
			& \phi  _{i}^{k+\frac{1}{2}}  = \phi_{i}^{k}  + \sum_{j \in \mathcal{N}_{i}^{\text{in}}}  \left({\sigma}_{{i}j} - \tilde{\sigma}_{ij}^k \right);
			\end{aligned}\end{equation} \vspace{-0.4cm}
			\State Push step:
		\begin{equation}\label{eq:asy-push-step}
			\hspace{-0.3cm}\begin{aligned}
				& z_{i}^{k+1}    =  a_{ii}\, z_{i}^{k+\frac{1}{2}}  , \quad \phi_{i}^{k+1}    =  a_{ii}\, \phi_{i}^{k+\frac{1}{2}}; \quad \forall j\in \mathcal{N}_{i}^{\text{out}},  \\
				& \rho_{j {i}}^{k+1}   = \rho_{j {i}}^{k} + a_{j {i}} \,z_{i}^{k+\frac{1}{2}},  \quad  {\sigma}_{j {i}}^{k+1}   = {\sigma}_{j {i}}^{k} + a_{j {i}} \,\phi_{i}^{k+\frac{1}{2}};  \end{aligned}	\end{equation}
			\State Mass-Buffer update:
			\begin{equation}
			 \tilde\rho_{{i}j}^{k+1} = \rho_{{i}j},  \quad   \tilde{\sigma}_{{i}j}^{k+1} = {\sigma}_{{i}j},   { \quad \,\ \forall j \in \mathcal{N}_{i}^{\text{in}}};\label{eq:mass-buff-update}
			\end{equation}
			\State \Return  $\quad    z_{i}^{k+1} / \phi_{i}^{k+1}.$

			\EndProcedure
		\end{algorithmic}\label{alg:ASYSCA}
	\end{algorithm}
More specifically, 	a global iteration counter $k$, unknown to the agents, is introduced, which increases by $1$ whenever a variable of the multiagent system changes.
	Let $i^k$ be the agent triggering iteration $k \to k+1$; it executes Steps (S1)-(S.3) (no necessarily withih the same activation), as described below. 

	\textbf{(S.1): Local optimization.} Agent $i^k$ solves  the strongly convex optimization problem (\ref{eq:sca}) based on the local surrogate    $\widehat{U}_{i^k}$.
 {It is tacitly assumed that  $\widehat{U}_{i^k}$ is chosen so that (\ref{eq:sca})  is  simple to solve (i.e.,  the  solution can be computed in closed form or efficiently). Given the solution  $\widetilde{x}_{i^k}^k$, $v_{i^k}^{k+1}$ is generated. }

	\textbf{(S.2): Consensus.}
	Agent $i^k$ may receive delayed variables from its in-neighbors $j \in \mathcal{N}_{i^k}^{in}$, whose iteration index is  $k - d_j^k$. 
	To perform its update, agent $i^k$  first  sorts the ``age'' of  all the received variables from agent $j$ since $k=0$, and then picks the most recently generated one. This is implemented  maintaining a local counter $\tau_{i^k j}$, updated  recursively as $\tau_{i^k j}^k = \max(\tau_{i^k j}^{k-1}, k-d_j^k)$. Thus,  the  variable agent $i^k$ uses from $j$ has iteration index $\tau_{i^k j}^k$.
	Since the  consensus algorithm is robust against asynchrony~\cite{Tian_arxiv}, we simply adopt the  update of SONATA [cf.~\eqref{eq:sonata_consensus}] and replace $v_j^k$ by its delayed version $v_j^{\tau_{i^k j}^k}$.

	\textbf{(S.3): Robust gradient tracking.}
	As anticipated in  Sec.~\ref{sec:SONATA}, the packet loss caused by asynchrony breaks the  sum preservation property \eqref{eq:sum_preservation} in SONATA. If treated in the same way as the $x$ variable in \eqref{eq:asy-consensus}, $y_i$ would fail to track  $(1/I) \sum_{i = 1}^{I} \nabla f_i (x_i)$. To cope with this issue, we leverage the  asynchronous sum-push scheme (P-ASY-SUM-PUSH) introduced in our companion paper \cite{Tian_arxiv} and update the $y$-variable as in \eqref{eq:asy-track}.
	Each agent $i$ maintains  mass counters $(\rho_{ji}, \sigma_{ji})$ associated  to $(z_i, \phi_i)$  that record the cumulative mass generated by $i$  for $j \in \mathcal{N}_i^{out}$ since $k=0$; and
	  transmits $(\rho_{ji}, \sigma_{ji})$. 
	In addition, agent $i$ also maintains  buffer variables $(\tilde{{\rho}}_{ij}, \tilde{\sigma}_{ij})$
	to track the latest mass counter $(\rho_{ij}, \sigma_{ij})$  from $j \in \mathcal{N}_i^{in}$ that has been used in its update.
	We  describe now the update of $z$ and $\rho$;    $\phi$ and $\sigma$ follows similar steps. 
	 For notation simplicity, let $i= i^k$ update. It first performs the sum step \eqref{eq:asy-sum-step} using  a possibly delayed mass counter $\rho_{i j}^{\tau_{i j}^k}$ received from $j$. By computing the difference  $\rho_{i j}^{\tau_{i j}^k} - \tilde{\rho}_{ij}^k$, it collects the sum of the $a_{ij} z_j$'s generated by $j$ that it has not yet added. Agent $i$ then sums them together with a gradient correction term (perturbation) $\epsilon = \nabla f_i (x_i^{k+1}) - \nabla f_i (x_i^{k}) $  to its current state variable $z_i^k$  to form the intermediate mass $z_i^{k+\frac{1}{2}}$. Next, in the push step \eqref{eq:asy-push-step}, agent $i$ splits  $z_i^{k+\frac{1}{2}}$, maintaining $a_{ii} z_i^{k+\frac{1}{2}}$ for itself and accumulating  $a_{ji} z_i^{k+\frac{1}{2}}$ to its local mass counter $\rho_{ji}^k$, to be transmit to $j \in \mathcal{N}_i^{out}$.
Since the last mass counter agent $i$   processed is $\rho_{i j}^{\tau_{i j}^k}$, it sets $\tilde{\rho}_{ij} =\rho_{i j}^{\tau_{i j}^k} $ [cf. \eqref{eq:mass-buff-update}].  Finally, it outputs $y_i^{k+1} \!=\! {z_i^{k+1}}/{\phi_{i}^{k+1}}$.
\vspace{-0.1cm}

	\section{Convergence of \ASYSCA/}\label{sec:con_analy}

	We study \ASYSCA/  under the  asynchronous model below.\vspace{-0.1cm}
	\begin{assumption}[Asynchronous model] \label{ass:delays}Suppose:		\begin{description}
	\item[(i)]  $\exists$  $0<T<\infty$ such that  {$\cup_{t=k}^{k+T-1} i^t=\mathcal{V}$}, for all $k\in \mathbb{N}_+$; 	 \item[(ii)]  $\exists$ $0<D<\infty$ such that $0 \leq d_j^k\leq D$, for all $j\in \mathcal{N}_{i^k}^{in}$ and $k\in \mathbb{N}_+$.\vspace{-0.1cm}\end{description}
	\end{assumption}

	Assumption~\ref{ass:delays}(i) is an essentially cyclic rule stating that  within $T$ iterations all agents will have updated at least once, which guarantees that all of them participate ``sufficiently often''. Assumption~\ref{ass:delays}(ii) requires bounded delay--old information must eventually be purged by the system. This asynchronous model is general and imposes no coordination among agents or specific communication/activation protocol--an extensive discussion on specific implementations and communication protocols satisfying Assumption~\ref{ass:delays} can be found for ASY-SONATA in the companion paper \cite{Tian_arxiv} and  apply also to  \ASYSCA/; we thus omit here further details.

The convergence of  \ASYSCA/  is established under two settings, namely: i) convex $F$   and error bound Assumption~\ref{ass:cost_functions_eb} (cf. Theorem \ref{thm:linear_const}); and ii) general nonconvex $F$ (cf. Theorem \ref{thm:sublinear_const}).

	\begin{theorem}[Linear convergence]\label{thm:linear_const}
		Consider~(P) under Assumption ~\ref{ass:cost_functions} and~\ref{ass:cost_functions_eb}, and let $U^\star$ denote the optimal function value. Let  {$\{(x_i^k)_{i = 1}^ I\}_{k \in \mathbb{N}}$} be the sequence generated by Algorithm~\ref{alg:ASYSCA}, under Assumption~\ref{ass:strong_conn}, \ref{ass:delays}, and with weight matrices   ${W}$ and ${A}$   satisfying Assumption~\ref{assumption_weights}.  Then, there exist a constant $\bar{\gamma}_{cvx} >0$ and a solution $x^\star$ of~\eqref{eq:problem} such that if   $\gamma \leq \bar{\gamma}_{cvx}$, it holds 		\[
		\| U(x_i^k) - U(x^\star) \|  = \mathcal{O}(\lambda^k), \quad \| x_i^k - x^\star \|  = \mathcal{O}\left((\sqrt{\lambda})^k \right),
	\] for all $i \in \mathcal{V}$ and  some $\lambda \in (0,1)$.
\qed
	\end{theorem}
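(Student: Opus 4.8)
The plan is to build a single scalar Lyapunov function $V^k$ that aggregates three error quantities and then show it contracts geometrically. First I would set up the natural ``stacked'' variables: let $\mathbf{x}^k = (x_i^k)_{i\in[I]}$, let $\bar x^k$ be an appropriate (possibly asynchrony-weighted) average of the $x_i^k$, and introduce (a) the \emph{consensus error} $e_c^k \triangleq \sum_i \|x_i^k - \bar x^k\|$, (b) the \emph{gradient-tracking error} $e_t^k$ measuring how far each $y_i^k$ is from $(1/I)\sum_j \nabla f_j(x_j^k)$ (accounting for the push-sum normalization $\phi_i^k$ and the delayed mass counters), and (c) the \emph{optimality gap} $e_o^k \triangleq U(\bar x^k) - U^\star$, or alternatively the proximal-gradient residual $r^k \triangleq \|\bar x^k - \mathrm{prox}_G(\bar x^k - \nabla F(\bar x^k))\|$. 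The three processes perturb one another; the heart of the argument is a coupled system of inequalities of the form $e_\bullet^{k+T'} \le \rho\, e_\bullet^k + (\text{small multiples of the other errors over a window})$, where $T'$ is a fixed horizon chosen to absorb the essential-cyclicity constant $T$ and the delay bound $D$ from Assumption~\ref{ass:delays}.

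\textbf{Key steps, in order.} (1) \emph{Consensus contraction:} Using Assumption~\ref{assumption_weights}(i)--(ii) (row-stochastic $W$, uniform positive weights) together with Assumption~\ref{ass:delays}, show that over a window of $O(T+D)$ iterations the augmented consensus map is a strict contraction on $e_c$, up to a perturbation proportional to $\gamma \sum \|\widetilde x_i - x_i\|$ (the local-optimization ``innovation''). This is the asynchronous analogue of the standard convergence of perturbed averaging; the delayed variables $v_j^{\tau_{i^kj}^k}$ are handled by lifting to a product graph/augmented-state representation as in \cite{Tian_arxiv}. (2) \emph{Tracking contraction:} Invoke the P-ASY-SUM-PUSH machinery of \cite{Tian_arxiv} to show that $e_t^k$ contracts geometrically (over a window) with a driving perturbation equal to the successive gradient differences $\|\nabla f_{i^k}(x_{i^k}^{k+1}) - \nabla f_{i^k}(x_{i^k}^k)\|$, which by $l$-smoothness is $\le l\gamma\|\widetilde x_{i^k}^k - x_{i^k}^k\| + l\,e_c$-type terms. (3) \emph{Descent on the objective:} Using the SCA surrogate properties (Assumption~\ref{assump:surrogate}: $\nabla\widetilde f_i(x;x) = \nabla f_i(x)$, $\widetilde\mu$-strong convexity, $\widetilde l$-Lipschitz gradient in the second argument) together with the optimality of $\widetilde x_{i^k}^k$ and $L$-smoothness of $F$, derive a ``sufficient decrease'' of $U(\bar x^{k+1})$ versus $U(\bar x^k)$ of the order $-c_1\gamma \|\bar x^{k} - \widehat{\mathrm{prox}}\|^2 + c_2(e_c^k + e_t^k)$, i.e., genuine descent modulo consensus/tracking perturbations. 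Here the asynchrony forces the use of endogenously varying weights in the average $\bar x^k$ (the Remark~\ref{rmk:Lyapunov} / Step 2 device) so that each agent's contribution is counted with the correct frequency. (4) \emph{Lyapunov assembly:} Form $V^k \triangleq e_o^k + \alpha\, e_c^k + \beta\, e_t^k$ (summed over a sliding window to match horizons) and choose $\gamma$ small and $\alpha,\beta$ appropriately so that the cross terms are dominated, yielding $V^{k+T'} \le V^k - c\gamma\, r^k{}^2$ for a constant $c>0$ — this uses that $r^k$ controls the innovation $\|\widetilde x - x\|$ up to consensus error. (5) \emph{Invoking the LT bound:} Since $F$ is convex and Assumption~\ref{ass:cost_functions_eb}(ii) holds (with $\eta$ fixed by the bounded level set implied by descent from step 3--4), we get $\mathrm{dist}(\bar x^k,\KK^*) \le \kappa r^k$; combined with convexity (which gives $e_o^k = U(\bar x^k)-U^\star \le \langle \nabla F + \partial G, \bar x^k - \Pi_{\KK^*}\bar x^k\rangle \lesssim \mathrm{dist}(\bar x^k,\KK^*)\cdot(\text{bounded})$), one bounds $e_o^k \le c_3\, r^k{}^2 + c_4 r^k$, and similarly $e_c^k, e_t^k$ are themselves $O(r^{k-\text{window}})$-controllable; hence $V^k \le C\, r^{k'}{}^2$ near the solution, so $V^{k+T'} \le (1 - c\gamma/C) V^k$ up to lower-order terms, giving $V^k = O(\lambda^k)$. (6) \emph{Extracting the two rates:} $\lambda$-linear decay of $V^k$ gives $U(x_i^k) - U^\star = O(\lambda^k)$ directly (since $e_o^k, e_c^k \le V^k$ and $|U(x_i^k) - U(\bar x^k)| \lesssim e_c^k$), while $\|x_i^k - x^\star\| = O((\sqrt\lambda)^k)$ follows because the \emph{squared} distance behaves like $V^k$ — one must pin down a single limit point $x^\star \in \KK^*$ via a Cauchy/summability argument on $\sum_k \|\bar x^{k+1}-\bar x^k\| \lesssim \sum_k \sqrt{\gamma r^k{}^2} = O(\sum (\sqrt\lambda)^k) < \infty$.

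\textbf{Main obstacle.} The crux is step (3)--(5): guaranteeing a \emph{descent} property strong enough to both (i) keep the iterates in a fixed sublevel set $\{U \le \eta\}$ so that the LT constants $\epsilon,\kappa$ are uniform, and (ii) have the Lyapunov decrement proportional to the \emph{same} residual $r^k$ that the LT bound converts into $\mathrm{dist}(\bar x^k,\KK^*)$. In the centralized proofs of \cite{luo1993error,tseng1991rate} this is immediate because the objective itself decreases monotonically; here the consensus and tracking perturbations destroy monotonicity of $U(\bar x^k)$, so one must show these perturbations are summable and of higher order than the descent term — which in turn requires the window-contraction estimates of steps (1)--(2) to be tight enough that $e_c^k, e_t^k \lesssim \sum_{\ell \le k} \gamma^{?}\,\rho^{k-\ell} r^\ell$, a bound that only closes for $\gamma$ below an explicit threshold $\bar\gamma_{cvx}$ determined by $l, L, \widetilde\mu, \widetilde l$, the mixing constant $\bar m$, the spectral gap of the augmented graph, and $T, D$. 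Carefully tracking how $\bar\gamma_{cvx}$ depends on these while keeping the bookkeeping of the endogenous weight vectors consistent is where the real work lies; the rest is (lengthy but) routine linear-algebra and small-gain estimation.
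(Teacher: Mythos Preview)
Your outline captures the right ingredients --- the three error types, the windowed contractions for consensus and tracking driven by the optimization innovation, the descent-plus-perturbation for the optimization error, and the need for endogenously varying weights to handle asynchrony. But step~(5) as written has a genuine gap, and the closing mechanism you propose is different from the paper's.

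\textbf{The gap in step (5).} You write $e_o^k \le c_3\, (r^k)^2 + c_4\, r^k$ and then conclude $V^k \le C\,(r^{k'})^2$ ``near the solution''. This is backwards: near the solution $r^k \to 0$, so the \emph{linear} term $c_4 r^k$ dominates the quadratic one, giving $V^k \sim r^k$, not $(r^k)^2$. Substituting into $V^{k+T'} \le V^k - c\gamma (r^k)^2$ then yields only $V^{k+T'} \le V^k - c'\gamma (V^k)^2$, which is $O(1/k)$ sublinear, not linear. A repair is possible --- for convex composite problems the LT condition is equivalent to a proximal-PL inequality, which does give $U(x)-U^\star \lesssim r(x)^2$ --- but you have not invoked that equivalence, and the bound you wrote (via $\langle \nabla F + \partial G, \bar x^k - \Pi_{\KK^*}\bar x^k\rangle$) only delivers the linear term.

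\textbf{How the paper closes the loop.} The paper never tries to upper-bound $e_o$ by $r^2$ and does not assemble a single scalar Lyapunov $V^k = e_o^k + \alpha e_c^k + \beta e_t^k$ for the contraction. Instead it keeps five quantities $E_y, E_x, E_t, O, \Delta$ separate, establishes a \emph{system} of coupled inequalities [cf.~(5a)--(5e)], and closes via a generalized small-gain theorem. The Lyapunov $\mathcal{L}^k = (\psi^k)^\top \widetilde{U}(H^k)$ --- note the endogenous weights multiply the \emph{values} $U(h_i^k)$ over all agents and delayed copies, rather than forming a weighted point $\bar x^k$ --- is used only in a \emph{preliminary} role: its descent (Proposition~\ref{thm:lyapu}) shows the iterates remain in a bounded level set and the residual vanishes, so that the LT premises~\eqref{eq:LT_condition} eventually hold. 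The actual contraction on the optimality gap is then obtained by a separate ``cost-to-go'' argument (Lemma~\ref{lem:bound_T}, Proposition~\ref{prop:contraction}): the mean-value theorem plus LT bound $\mathrm{dist}(x_{i^k}^k,\KK^*)$, yielding $U(v_{i^k}^{k+1})-U^\star \le \sigma(\gamma)\bigl(U(x_{i^k}^k)-U^\star\bigr) + C_3(\gamma)\,E_t^k$ with $\sigma(\gamma)<1$, which is propagated through the row-stochastic matrix products $(\W\D)^{k:\ell}$ to get geometric decay of $\max_i\bigl(U(h_i^k)-U^\star\bigr)$. This sidesteps the $e_o$-versus-$r^2$ issue entirely. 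Your single-Lyapunov route can likely be made to work once step~(5) is repaired via proximal-PL, but the paper's small-gain route is more modular and avoids the coefficient balancing your step~(4) would require.
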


 Theorem~\ref{thm:linear_const} establishes the first linear convergence result of a distributed (synchronous or asynchronous)  algorithm over networks  without requiring strong convexity but the   weaker LT condition. 
Linear convergence is achieve on both function values and sequence iterates.

We consider now the nonconvex setting. To  measure the progress of \ASYSCA/ towards stationarity, we introduce the merit function \begin{equation} \label{eq:merit} \begin{aligned}  M_F(x^k) \triangleq  \max \big\{ \| \bar{x}^k - \prox_G (\bar{x}^k - \nabla F (\bar{x}^k))\|^2,   \sum_{i=1}^I \| x_i^k - \bar{x}^k\|^2 \big\}, \end{aligned} \end{equation} where  $\bar{x}^k \triangleq (1/I) \cdot \sum_{i=1}^I x_i^k$, and $\prox_G$ is the prox operator (cf. Sec.~\ref{sec:eb}). $M_F$ is a valid merit function since it is continuous and $M_F(x^k) = 0$ if and only if all the $x_i$'s are consensual and stationary. The following theorem shows that  $M_F(x^k)$ vanishes at sublinear rate.

	\begin{theorem}[Sublinear convergence]\label{thm:sublinear_const}
		Consider~(P) under Assumption~\ref{ass:cost_functions} (thus possibly nonconvex). Let   {$\{(x_i^k)_{i = 1}^I \}_{k \in \mathbb{N}_0}$} be the sequence generated by Algorithm~\ref{alg:ASYSCA}, in the same setting of Theorem~\ref{thm:linear_const}. Given $\delta>0$,  let $T_{\delta}$ be the first iteration $k\in \mathbb{N}$ such that $M_F (x^k)  \leq \delta$.
		Then, there exists a  $\bar{\gamma}_{ncvx} >0$, such that if $ \gamma \leq \bar{\gamma}_{ncvx}$,   $T_\delta = \mathcal{O} (1/\delta)$. \qed
	\end{theorem}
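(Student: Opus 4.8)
The plan is to build a Lyapunov (potential) function that simultaneously captures three quantities: (a) the optimality gap of the average iterate, measured through $U(\bar x^k)$ (or a suitable majorizing surrogate), (b) the consensus error $\sum_{i}\|x_i^k-\bar x^k\|^2$, and (c) the gradient tracking error $\sum_i\|y_i^k-(1/I)\sum_j\nabla f_j(x_j^k)\|^2$. The first step is bookkeeping to handle asynchrony: following the device flagged in the introduction (Sec.~\ref{sec:pf_1}, Step~2), I would pass to an ``augmented'' state that stacks the current iterates together with the last $D$ delayed copies (of $v$'s and of the mass counters $\rho,\sigma$), so that the asynchronous recursions \eqref{eq:asy-consensus}--\eqref{eq:mass-buff-update} become one-step recursions in the augmented variables. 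Crucially, in the Lyapunov function the consensus and tracking error terms must be weighted by vectors that are the (time-varying) Perron/left-eigenvector-type quantities associated with the products of the delayed stochastic matrices; this is what keeps the perturbation bounds from blowing up. Once the right weighting is in place, the asynchronous consensus/push-sum machinery of \cite{Tian_arxiv} gives linear contraction of both error terms \emph{up to} a perturbation proportional to the size of the per-iteration increments $\|v_{i^k}^{k+1}-x_{i^k}^k\| = \gamma\|\widetilde x_{i^k}^k - x_{i^k}^k\|$ and to $\|\nabla f_{i^k}(x_{i^k}^{k+1})-\nabla f_{i^k}(x_{i^k}^k)\|\le l\|v_{i^k}^{k+1}-x_{i^k}^k\|$ (using $l$-smoothness and the consensus step being nonexpansive on the stochastic matrix).

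The second, decisive step is the descent estimate for the objective along the average iterate. Using the optimality of $\widetilde x_{i^k}^k$ in \eqref{eq:sca}, strong convexity of the surrogate (Assumption~\ref{assump:surrogate}(ii), constant $\widetilde\mu$) and the gradient-consistency property $\nabla\widetilde f_{i^k}(x;x)=\nabla f_{i^k}(x)$, one obtains the standard SCA inequality
\[
U(\bar x^{k+1}) \le U(\bar x^k) - c_1\gamma\,\|\widetilde x_{i^k}^k - x_{i^k}^k\|^2 + c_2\,\big(\text{consensus error}\big) + c_3\,\big(\text{tracking error}\big) + O(\gamma^2),
\]
where the error terms enter because $i^k$ minimizes a surrogate built at $x_{i^k}^k$ (not $\bar x^k$) and uses $Iy_{i^k}^k$ in place of the true aggregate gradient. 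The $L$-smoothness of $F$ controls the $O(\gamma^2)$ term, and $U$ being lower bounded (Assumption~\ref{ass:cost_functions}(iv)) gives a finite total budget. Combining this with the contraction inequalities for the two error terms, and choosing $\bar\gamma_{ncvx}$ small enough that the cross-coupling coefficients $c_2,c_3$ are dominated by the contraction margins, produces a Lyapunov inequality of the form $V^{k+1}\le V^k - \gamma\,c\,\|\widetilde x_{i^k}^k - x_{i^k}^k\|^2 - c'\,(\text{errors}^k)$ with $V^k$ bounded below. Summing over $k$ yields $\sum_k \big(\gamma\,\|\widetilde x_{i^k}^k - x_{i^k}^k\|^2 + \text{errors}^k\big) < \infty$.

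The final step converts this summability into the $\mathcal O(1/\delta)$ claim for $M_F$. From summability, $\min_{k\le K}(\|\widetilde x_{i^k}^k - x_{i^k}^k\|^2 + \text{errors}^k) = \mathcal O(1/K)$. Then I would relate the SCA direction norm to the prox-residual of $F$ at $\bar x^k$: by the fixed-point characterization of \eqref{eq:sca}, when $\widetilde x_{i^k}^k = x_{i^k}^k$, $y_{i^k}^k$ equals the aggregate gradient and the $x_i$'s are consensual, then $\bar x^k$ is stationary; quantitatively, $\|\bar x^k - \prox_G(\bar x^k - \nabla F(\bar x^k))\|^2 \le C\big(\|\widetilde x_{i^k}^k - x_{i^k}^k\|^2 + \text{consensus error} + \text{tracking error}\big)$, using nonexpansiveness of $\prox_G$, $l$-smoothness to pass from $x_{i^k}^k$ to $\bar x^k$, and Assumption~\ref{ass:delays}(i) (the $T$-bound) to ensure every agent's direction is accounted for within a window of $T$ iterations. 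Together with the bound on the consensus error itself, this gives $\min_{k\le K} M_F(x^k) = \mathcal O(1/K)$, hence $T_\delta = \mathcal O(1/\delta)$. The main obstacle is Step~2 combined with the asynchronous weighting: getting the coupling constants $c_2,c_3$ small enough \emph{uniformly in the delay/activation pattern}, which is exactly why the endogenously time-varying weighting vectors in the Lyapunov function are needed rather than fixed weights; handling the $T$- and $D$-windows so that ``every agent updates often enough'' translates into a genuine decrease of $M_F$ (not just of a single agent's local residual) is the other delicate point.
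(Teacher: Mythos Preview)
Your overall architecture is right---build a Lyapunov function, establish square-summability of $\|\widetilde x_{i^k}^k-x_{i^k}^k\|^2$, then link $M_F$ to that quantity plus consensus/tracking errors---and you correctly flag that time-varying Perron-type weights are essential. But the specific Lyapunov you propose, $U(\bar x^k)$ with $\bar x^k=(1/I)\sum_i x_i^k$, has a real obstruction. Two issues compound. First, since $W$ is only row-stochastic and updates are asynchronous, the uniform average $\bar x^k$ does not evolve as ``$\bar x^k+\text{(small step)}$'': the update $x_{i^k}^{k+1}$ mixes in delayed $v_j^{\tau_{i^kj}^k}$, so $\bar x^{k+1}-\bar x^k$ is not $O(\gamma)$ in the direction $\widetilde x_{i^k}^k-x_{i^k}^k$ and a descent-lemma argument on $F(\bar x^k)$ does not isolate the SCA step cleanly. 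Second, and more fundamentally, $G$ is nonsmooth: you cannot Taylor-expand $G(\bar x^{k+1})-G(\bar x^k)$, and there is no evident way to relate this difference to the quantity $G(\widetilde x_{i^k}^k)-G(x_{i^k}^k)$ that the SCA optimality condition actually controls.

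The paper resolves both issues by \emph{splitting} the Lyapunov: $L^k=F(\bar h^k)+{\psi^k}^\top\widetilde G(H^k)$, where $\bar h^k={\psi^k}^\top H^k$ is the $\psi^k$-weighted average of the augmented state and $\widetilde G$ stacks $G$ evaluated at every (delayed) local copy. The point of the $\psi^k$-average is that, by ${\psi^{k+1}}^\top\widehat W^k={\psi^k}^\top$, one gets the clean recursion $\bar h^{k+1}=\bar h^k+\gamma\psi_{i^k}^k(\widetilde x_{i^k}^k-x_{i^k}^k)$, so the descent lemma on the smooth part $F$ produces exactly the $-\gamma\,\|\widetilde x_{i^k}^k-x_{i^k}^k\|^2$ term you want (after invoking the SCA first-order condition). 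The nonsmooth part is handled \emph{separately}: convexity of $G$ plus row-stochasticity of $\widehat W^k$ give $\widetilde G(H^{k+1})\preccurlyeq\widehat W^k[(1-\gamma)\widetilde G(H^k)+\gamma\widetilde G(H^k+\Delta H^k)]$, and multiplying by ${\psi^{k+1}}^\top$ converts this into ${\psi^{k+1}}^\top\widetilde G(H^{k+1})-{\psi^k}^\top\widetilde G(H^k)\le\gamma\psi_{i^k}^k(G(\widetilde x_{i^k}^k)-G(x_{i^k}^k))$, which is exactly what the SCA inequality supplies. Your ``suitable majorizing surrogate'' hedge is pointing at this, but the split and the placement of the $\psi^k$ weights (on $G$ componentwise, not on an averaged argument) is the missing idea.

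One smaller point: in your final step you invoke the $T$-bound to ``account for every agent's direction within a window.'' The paper does not need this. At each $k$ it bounds $\|\bar x^k-\mathrm{prox}_G(\bar x^k-\nabla F(\bar x^k))\|$ by passing through the \emph{single} active agent $x_{i^k}^k$ (using nonexpansiveness of $\mathrm{prox}_G$ and $L$-smoothness) and absorbing the discrepancy into the consensus error; Lemma~\ref{lm:resi} then controls the residual at $x_{i^k}^k$ by $\|\widetilde x_{i^k}^k-x_{i^k}^k\|^2$ and the tracking error. Summing $M_F(x^t)$ directly gives a finite bound $B_{\mathrm{opt}}$, hence $T_\delta\le B_{\mathrm{opt}}/\delta$.
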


  The expression of the step-size can be found in \eqref{eq:noncon_step}. 
  \vspace{-0.3cm}

	\section{Numerical Results}\label{sec:simu}
We test  \ASYSCA/ on a LASSO problem (a convex instance of \eqref{eq:problem}) and an M-estimation problem (a constrained nonconvex formulation)  over both directed and undirected graphs.  The experiments were performed using MATLAB R2018b on a cluster computer with two 22-cores Intel E5-2699Av4 processors (44 cores in total) and 512GB of RAM each.
The setting of our simulations is the following.

\textbf{(i) Network graph.} We simulated both undirected and directed graph, generated according to the following procedures.
\texttt{Undirected graph:}  An undirected graph is generated according
to the Erdos-Renyi model with parameter p = 0.3 (which
represents the probability of having an edge between any
two nodes). Doubly stochastic weight matrices are used, with  weights generated according to the Metropolis-Hasting rule.
\texttt{Directed graph:}  We first generate a directed cycle graph to guarantee strong connectivity.  Then we randomly add a fixed number of out-neighbors for each node.  The  row-stochastic weight matrix $W$ and the column-stochastic weight matrix $A$ are generated using uniform weights.

\textbf{(ii) Surrogate functions of \ASYSCA/ and SONATA.}  We consider two surrogate functions: $\widetilde{f}^1_i(x;x_i^k) = \nabla f_i(x_i^k)^\top (x - x_i^k) + \frac{\widetilde{\mu}}{2} \|x- x_i^k\|^2$ and  $\widetilde{f}^2_i(x;x_i^k) = \nabla f_i(x_i^k)^\top (x - x_i^k) +\frac{1}{2} (x - x_i^k)^\top H (x - x_i^k) + \frac{\widetilde{\mu}}{2} \|x- x_i^k\|^2,$ where $H$ is a diagonal matrix having the same diagonal entries as $\nabla^2 f_i(x_i^k).$  We suffix SONATA and  \ASYSCA/   with ``-L'' if the former surrogate functions are employed and with ``-DH'' if the latter are adopted.

\textbf{(iii) Asynchronous model.}  Each agent sends   its updated information to its out-neighbors and starts a new computation round, immediately after it finishes one.  The length of each computation time is sampled from a uniform distribution over the interval $[p_{\min}, p_{\max}]$.  The communication time/traveling time of each packet follows an exponential distribution $\exp(\frac{1}{D_{\text{tv}}})$.  Each agent  uses the most recent information among the arrived packets from its in-neighbors, which  in general is subject to delays.     In all our simulations, we set $p_{\min} = 5$, $p_{\max} = 15$, and $D_{\text{tv}} = 30$ (ms is the default time unit).

\textbf{(iv) Comparison with state of arts schemes.}  We compare the convergence rate of \ASYSCA/, AsyPrimalDual~\cite{wu2016decentralized} and synchronous SONATA in terms of time. The parameters are manually tuned to yield the best empirical performance for each--the used setting is reported in the caption of the associated figure.
Note that AsyPrimalDual is the only   asynchronous decentralized algorithm able to handle constraints and nonsmoothness additive functions in the objective and constraints, but only over undirected graphs and under restricted assumptions of asynchrony; also AsyPrimalDual is provably convergence only when applied to convex problems.
\vspace{-0.2cm}

\subsection{LASSO}\label{sec:simul_lasso}
\begin{figure}
		\centering
		\includegraphics[width=\linewidth]{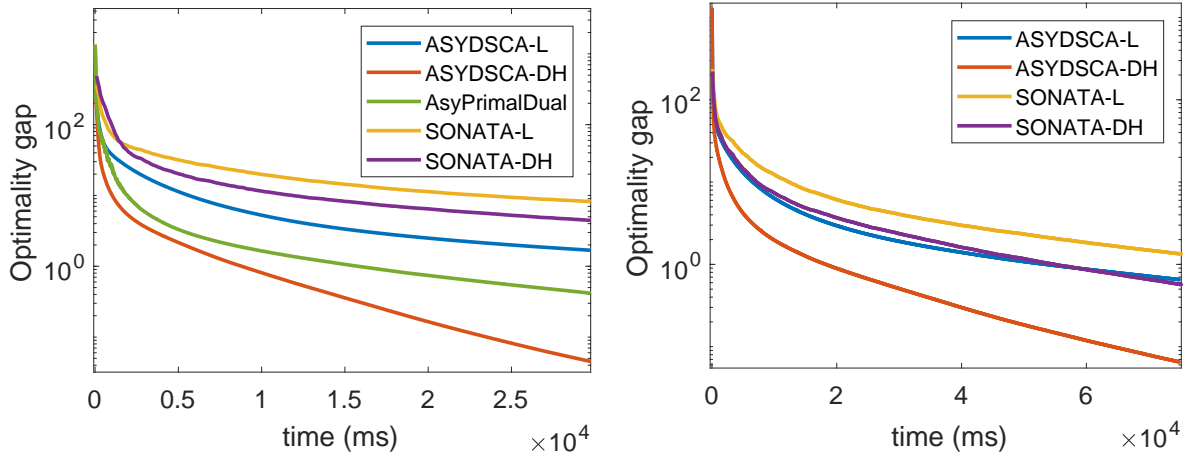}
		\captionof{figure}{\small LASSO.  \textbf{Left:} undirected graph.  We set $\tilde{\mu} = 8$ and $\gamma = 0.008$ in \ASYSCA/-L; $\tilde{\mu} = 1$ and $\gamma = 0.008$ in \ASYSCA/-DH; $\alpha = 0.06$ and $\eta = 0.6$ in AsyPrimalDual; $\tilde{\mu} = 1$ and $\gamma = 0.002$ in SONATA-L; and $\tilde{\mu} = 1$ and $\gamma = 0.005$ in SONATA-DH.  \textbf{right:} directed graph (each agent is of 10 out-neighbors).  We set $\tilde{\mu} = 10$ and $\gamma = 0.01$ in \ASYSCA/-L; $\tilde{\mu} = 10$ and $\gamma = 0.03$ in \ASYSCA/-DH;  $\tilde{\mu} = 10$ and $\gamma = 0.03$ in SONATA-L; and $\tilde{\mu} = 10$ and $\gamma = 0.05$ in SONATA-DH.}
		\label{fig:lasso}\vspace{-0.3cm}
\end{figure}

The decentralized LASSO problem reads \vspace{-0.1cm}
\begin{equation}
		\min_{x\in\mathbb{R}^n}  U(x)\triangleq    \sum_{i\in [I]} \| M_ix- b_i\|^2   +\lambda \norm{x}_1.\vspace{-0.2cm}
		\label{eq:LASSO-problem}
\end{equation}
Data $(M_i,b_i)_{i\in[I]}$ are generated as follows. We choose  $x_0\in \mathbb{R}^n$  as a ground truth sparse vector, with  $density*n$ nonzero entries drawn i.i.d. from $\mathcal{N}(0,1)$.  
Each row of $M_i \in \mathbb{R}^{r \times n}$ is drawn i.i.d. from $\mathcal{N}(0,\Sigma)$ with $\Sigma$ as a diagonal matrix such that $\Sigma_{i,i} = i^{-\omega}$.  We use $\omega$ to control the conditional number of $\Sigma.$
Then we generate $b_i =  M_i x_0 + \delta_i $, with each entry of $\delta_i$ drawn i.i.d. from $\mathcal{N}(0, 0.01)$.  We set $r = 10$, $n = 300$, $I = 20$, $\lambda = 2$, $\omega = 1.1$ and $density = 0.3$.
Since the problem satisfies the LT condition, we use $\frac{1}{I} \sum_{i\in[I]}  U\left(x_i^k \right) - U^\star$ as the optimality measure.     The result  are reported in Fig.~\ref{fig:lasso}.
\vspace{-0.3cm}

\subsection{Sparse logistic regression}
\begin{figure}
		\centering
		\includegraphics[width=\linewidth]{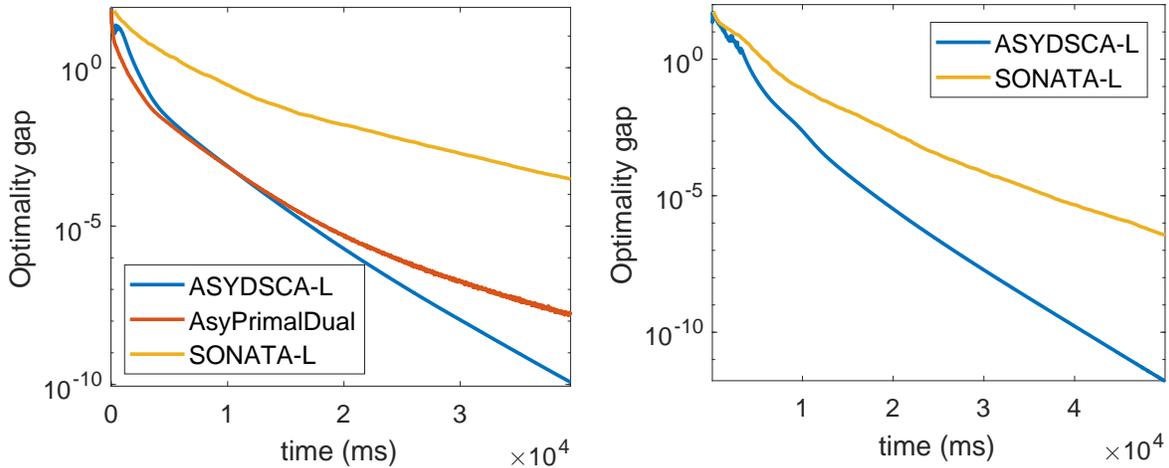}
		\captionof{figure}{\small Logistic regression.  \textbf{Left:} undirected graph.  We set $\tilde{\mu} = 10$ and $\gamma = 0.06$ in \ASYSCA/-L; $\alpha = 0.1$ and $\eta = 0.7$ in AsyPrimalDual; and $\tilde{\mu} = 10$ and $\gamma = 0.08$ in SONATA-L.  \textbf{right:} directed graph (each agent is of 10 out-neighbors).  We set $\tilde{\mu} = 10$ and $\gamma = 0.05$ in \ASYSCA/-L; and $\tilde{\mu} = 10$ and $\gamma = 0.1$ in SONATA-L.}
		\label{fig:logistic}\vspace{-0.3cm}
\end{figure}
We consider the decentralized sparse logistic regression problem in the following form
\[
\min_{x\in \mathbb{R}^n}  \sum_{i\in [I]} \sum_{s \in \mathcal{D}_i} \log (1+ \exp(-y_s \, u_s^\top x)) + \lambda \norm{x}_1,\vspace{-0.2cm}
\]
Data $(u_s,y_s)$,  ${s\in \cup_{i\in [I]}\mathcal{D}_i}$, are generated as follows. We  first choose $x_0\in \mathbb{R}^n$  as a ground truth sparse vector with  $density*n$ nonzero entries drawn i.i.d. from $\mathcal{N}(0,1)$.  We generate each sample feature $u_s$ independently, with each entry drawn i.i.d. from $\mathcal{N}(0,1)$; then we set $y_s=1$ with probability $1/(1+\exp(-u_s^\top x_0))$, and $y_s=-1$ otherwise.
We set $\abs{\mathcal{D}_i} = 3,\forall i\in [I],\, n=100,\, I =20,\,  \lambda = 0.01$ and $density=0.3.$
We use the same optimality measure as that for the LASSO problem.   The results and the tuning of parameters are reported in Fig.~\ref{fig:logistic}.\vspace{-0.2cm}

\subsection{M-estimator}

\begin{figure}
		\centering
		\includegraphics[width=\linewidth]{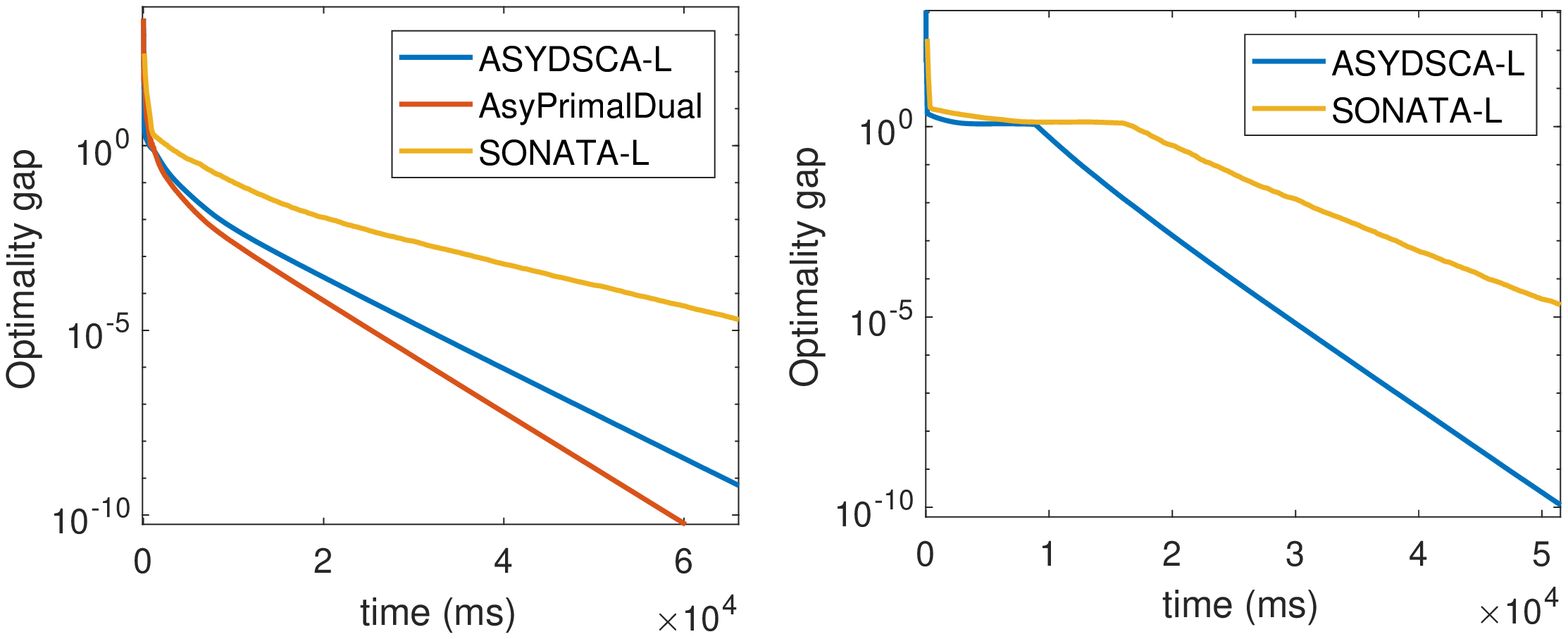}
		\captionof{figure}{\small m-estimator.  \textbf{Left:} undirected graph.  We set $\tilde{\mu} = 300$ and $\gamma = 0.1$ in \ASYSCA/-L; $\alpha = 0.01$ and $\eta = 0.6$ in AsyPrimalDual; and $\tilde{\mu} = 100$ and $\gamma = 0.1$ in SONATA-L.  \textbf{right:} directed graph (each agent is of 7 out-neighbors).  We set $\tilde{\mu} = 1000$ and $\gamma = 0.08$ in \ASYSCA/-L; and $\tilde{\mu} = 1000$ and $\gamma = 0.2$ in SONATA-L.}
		\label{fig:m-estimator}\vspace{-0.3cm}
\end{figure}

As nonconvex (constrained, nonsmooth) instance of  problem \eqref{eq:problem},  we consider the following M-estimation task   \cite[(17)]{zhang2019robustness}:
\vspace{-0.1cm}\begin{equation}
	\label{eq:M_est}
\min_{\norm{x}_2\leq r} \frac{1}{\abs{\mathcal{D}}} \sum_{i\in [I]} \sum_{s \in \mathcal{D}_i} \rho_{\alpha}(u_s^\top x - y_s) + \lambda \norm{x}_1,\vspace{-0.1cm}
\end{equation}
where $\rho_\alpha(t) = (1-e^{-\alpha \,t^2/2})/\alpha$ is the nonconvex Welsch's exponential squared loss and $\mathcal{D} \triangleq \cup_{i\in[I]} \mathcal{D}_i$.  We   generate $x_0\in \mathbb{R}^n$ as unit norm sparse vector with  $density*n$ nonzero entries drawn i.i.d. from $\mathcal{N}(0,1)$.  Each entry of $u_s \in \mathbb{R}^{n}$ is drawn i.i.d. from $\mathcal{N}(0,1)$; we generate $y_s =  {u_s}^\top x_0 + 0.1*\epsilon_s$, with $\epsilon_s \overset{i.i.d.}{\sim}\mathcal{N}(0, 1)$.
We set $\abs{\mathcal{D}_i} = 10$, for all $i\in [I]$, $n=100$,  $I =30,$  $\alpha = 0.1,$ $r =2$,  $\lambda = 0.01$, and $density=0.1.$
Since  (\ref{eq:M_est}) is nonconvex,  progresses   towards stationarity and consensus are measured using the merit function   $M_F(\cdot)$ in \eqref{eq:merit}. The result and   tuning of parameters are reported in Fig.~\ref{fig:m-estimator}.\vspace{-0.2cm}

\subsection{Discussion}\label{sec:sim-discussion}
  All the experiments clearly show that   \ASYSCA/ achieves linear rate on LASSO and Logistic regression, with nonstrongly convex objectives, both over undirected   and directed graphs--this supports our theoretical findings (Theorem \ref{thm:linear_const}). 
The flexibility in choosing the surrogate functions provides us the chance to better exploit the curvature of the objective function than plain linearization-based choices.   For example, in the LASSO experiment, ASY-DSCA-DH outperforms all the other schemes due to its advantage of better exploiting   second order information.
Also,   \ASYSCA/  compares favorably with    AsyPrimalDual.   \ASYSCA/ exhibits  good performance also in the nonconvex setting (recall that   no convergence proof is available for AsyPrimalDual applied to nonconvex problems). In our experiments,  asynchronous algorithms turned to be  faster than synchronous ones.  The reason is that,
at each iteration, agents in synchronous algorithms must  wait for the slowest agent receiving the information  and finishing its computation (no delays are allowed), before proceeding to the next iteration. This is not the case of  asynchronous algorithms wherein agents communicate and update continuously with no coordination.\vspace{-0.2cm}

\section{Proof of Theorem~\ref{thm:linear_const}}\label{sec:pf_1}
\subsection{Roadmap of the proof}\vspace{-0.1cm} We begin introducing in this section the roadmap of the proof.
Define ${x}^k \triangleq [{x}_1^k, \cdots, {x}_I^k]^{\top}$, ${v}^k \triangleq [{v}_1^k, \cdots, {v}_I^k]^{\top} \in \mathbb{R}^{I \times n}$; and let $S\triangleq (D+2)I.$ 
Construct the two  $S\times n$  matrices:\vspace{-0.2cm}  
\[\begin{aligned}
	 & \augY^k \triangleq {e}_{i^k} \left( \Delta x^k\right)^\top,\quad \text{with}\quad \Delta x^k \triangleq \widetilde{x}_{i^k}^k - {x}_{i^k}^k,\\    & \augX^{k}  \triangleq [({x}^k)^\top, ({v}^k)^\top, ({v}^{k-1})^\top, \cdots, ({v}^{k-D})^\top]^{\top},
\end{aligned}\vspace{-0.2cm}
\]
with ${v}^t=0$, for  $t \leq 0$.
Our proof builds on the  following quantities that monitor the progress of the algorithm.
\begin{subequations}\begin{itemize}
	\item Optimality gaps: \begin{equation}
			\label{def:opt_err}    	\NT^k  \triangleq  \|\tl{x}_{i^k}^k - x_{i^k}^k\|,\quad \OG^k  \triangleq  \max_{i\in [S]} U(\augX_i^k) - U^\ast; \vspace{-0.1cm}
 	\end{equation}
		\item Consensus errors ($\CV^k$ is some weighted average of row vectors of $\augX^k$ and will be defined in Sec.~\ref{sec:prelim}):\begin{equation}
		\label{def:consensus_err}    \CE^k  \triangleq  \norm{\augX^{k}- \bm{1} \cdot \CV^{k}}, \quad \TE^k \triangleq \norm{{y}_{i^k}^{k}-\avg{g}^k};\vspace{-0.1cm}
\end{equation}
\item Tracking error:\vspace{-0.1cm}\begin{equation}
	 \FY^k  \triangleq  \norm{Iy_{i^k}^k -\nabla F(x_{i^k}^k)}^2.\vspace{-0.1cm}
\end{equation}
\end{itemize}\end{subequations}
Specifically, $\NT^k$ and $\OG^k$ measure the distance of the $x_i^k$'s from optimality in terms of  step-length and objective value. $\CE^k$ and $\TE^k$ represents the consensus error of  $x_i$'s and $y_i$'s, respectively while  $\FY^k$ is the tracking error of $y_i^k$.  Our goal is to show that the above quantities vanish at a linear rate, implying  convergence (at the same rate)  of the iterates generated by the algorithm to a solution of  Problem~\eqref{eq:problem}.    Since  each of them affects the dynamics of the others, our proof begins   establishing the following  set of inequalities linking these quantities  (the explicit expression of the constants below will be given in the forthcoming sections):\vspace{-0.2cm}
\begin{subequations}\label{eq:small_gain_inequalities}
	\begin{align}
 	\begin{split}
			& \label{eq:consensus_bound_y}\TE^{k+1}   \leq  3 C_1 l \sum_{l=0}^k \rho^{k-l}  \left(  \CE^l  + \gamma \NT^l \right)    + C_1 \rho^k \norm{{g}\spe{0}},
		\end{split}\\
		\begin{split}
			& \label{eq:consensus_bound_x}\CE^{k+1}  \leq  C_2 \rho^k \CE^{0} + C_2 \sum_{l=0}^k \rho^{k-l} \gamma \NT^l,
		\end{split}\\
		\begin{split}
			& \label{eq:tracking_bound}\FY^k  \leq  8 I\,l^2 (\CE^k)^2 + 2 I^2 (\TE^k)^2,
		\end{split}\\
		\begin{split}
			& \label{eq:opt_bound}\OG^{k+1}
			\leq  C_4(\gamma)  \, \zeta(\gamma)^{k} \OG^{0} + \frac{C_3(\gamma) C_4(\gamma)}{\zeta(\gamma)} \sum_{\ell = 0}^k  \zeta(\gamma)^{k-\ell} \FY^{\ell},
		\end{split}\\
		\begin{split}
			& \label{eq:dx_bound}(\NT^k )^2  \leq \frac{1}{\gamma \left( \tilde{\mu} - \frac{\epsilon}{2}- \frac{\gamma L}{2}  \right) } \OG^k + \frac{1}{2  \, \epsilon\left( \tilde{\mu} - \frac{ \epsilon}{2}- \frac{\gamma L}{2}  \right)  } \FY^k.
		\end{split}
	\end{align}
\end{subequations}
We then show that  $\NT^k$,  $\OG^k$, $\CE^k$, $\TE^k$ and $\TE^k$ vanish at linear rate chaining the above inequalities by means of the generalized small gain theorem \cite{Tian_arxiv}.

The main steps of the proof are summarized next.

 \noindent $\bullet$ \textbf{Step 1: Proof of \eqref{eq:consensus_bound_y}-\eqref{eq:tracking_bound} via P-ASY-SUM-PUSH.}
We rewrite    (S.2)  and  (S.3) in \ASYSCA/ (Algorithm~\ref{alg:ASYSCA}) as instances of  the perturbed asynchronous consensus scheme and the perturbed asynchronous sum-push scheme (the P-ASY-SUM-PUSH) introduced in the companion paper \cite{Tian_arxiv}. By doing so,  we can bound  the consensus errors $\CE^k$ and $\TE^k$ in terms of $\Delta^k$ and then prove \eqref{eq:consensus_bound_y}-\eqref{eq:consensus_bound_x}--see Lemma~\ref{lm:consensus_err} and  Lemma~\ref{lm:tracking_err}. Eq.~\eqref{eq:tracking_bound} follows readily from \eqref{eq:consensus_bound_y}-\eqref{eq:consensus_bound_x}--see Lemma \ref{lm:sqr_sum}.  

\noindent $\bullet$ \textbf{Step 2: Proof of \eqref{eq:opt_bound}-\eqref{eq:dx_bound}  under  the LT condition.} Proving   \eqref{eq:opt_bound}--contraction of the  optimality measure $\OG^k$ up to the tracking error--poses several challenges.  To  prove 
  contraction of some form of optimization errors, existing techniques developed in the literature of {\it distributed} algorithms   \cite{shi2015extra,Nedich-geometric,qu2017harnessing,sun2019convergence,alghunaim2019decentralized}   (including   our companion paper \cite{Tian_arxiv}) leverage  strong convexity of $F$, a property that is replaced here by the weaker local growing condition  \eqref{eq:LT_effect} in the LT error bound. 
  Hence, they are not applicable to our setting.
 On the other hand,  existing proofs showing linear rate of {\it centralized} first-order methods under the LT condition  \cite{luo1993error} do not readily customize to our distributed, asynchronous setting, for the  reasons elaborated next. To invoke the local growing condition  \eqref{eq:LT_effect},  one needs   first to show that the sequences generated by the algorithm enters (and stays into) the region where    \eqref{eq:LT_condition} holds, namely: a) the function value remains  bounded; and b) the proximal operator residual is sufficiently small. A standard path to prove a) and b) in the centralized setting is showing that the objective function sufficiently descents along the trajectory of the algorithm.   Asynchrony apart,  in the distributed setting, function values  on the agents' iterates  do not   monotonically decrease provably,   due to  consensus and gradient tracking errors. To cope with these issues, in this Step 2, we put forth a new analysis. Specifically,   i) Sec.~\ref{sec:asymptotic_convergence}: we   build  a novel   Lyapunov function [cf. \eqref{eq:Ly-convex}] that linearly combines   objective values of current and past (up to $D$) iterates (all the elements of $H^k$); notice that the choice of the weights  (cf.  ${\psi}^{k}$ in Lemma \ref{lm:Wscramb}) is very peculiar and represents a major departure from existing approaches (including our companion paper \cite{Tian_arxiv})--${\psi}^{k}$ {\it endogenously vary    according to the asynchrony trajectory of the algorithm}. The Lyapunov function  is proved to ``sufficiently'' descent over the asynchronous iterates of \ASYSCA/ (cf. Proposition \ref{thm:lyapu}); ii)  Sec.~\ref{sec:err_bound}:   building on  such descent properties, we manage to prove that   $x_{i^k}^k$  will eventually satisfy  the aforementioned conditions  \eqref{eq:LT_condition} (cf. Lemma \ref{lm:resi} \& Corollary \ref{cor:bdd_err}), so that the LT   growing property \eqref{eq:LT_effect} can be invoked  at $x_{i^k}^k$ (cf. Corollary~\ref{cor:err_bound}); iii) Sec. \ref{sec:opt_err}: Finally, leveraging this local   growth, we  uncover relations between $\OG^k$  and  $\FY^k$ and  prove \eqref{eq:opt_bound} (cf. Proposition \ref{prop:OG}). Eq. \eqref{eq:dx_bound} is proved in   Sec. \ref{sec:opt_err} by product of the  derivations above.

\noindent $\bullet$ \textbf{Step 3: R-linear convergence via the generalized small gain theorem.}
We complete the proof of linear convergence by applying   \cite[Th. 23]{Tian_arxiv} to the inequality system \eqref{eq:small_gain_inequalities}, and conclude that all the local variables $\{x_i\}_{i\in[I]}$ converge to the set of optimal solutions $\KK^\ast$ R-linearly.\vspace{-0.3cm}

\subsection{Step 1: Proof of \eqref{eq:consensus_bound_y}-\eqref{eq:tracking_bound}}\label{sec:prelim} 

We interpret the consensus step (S.2) in Algorithm 2  as an instance of the perturbed asynchronous consensus scheme~\cite{Tian_arxiv}: (\ref{eq:asy-consensus}) can be rewritten as
\begin{align}\label{eq:chi}
	{\augX}^{k+1} = \W^k({\augX}^{k} + \gamma \augY^k),
\end{align}
where $\W^k$ is a time-varying augmented matrix induced by the update order of the agents and the delay profile.    The specific expression of $\W^k$ can be found in \cite{Tian_arxiv} and is omitted here, as it is  not  relevant to the convergence proof. We only need to recall the following properties of $\W^k$.

\begin{lemma}\label{lm:Wscramb} \cite[Lemma~17]{Tian_arxiv}
	Let $\{\W^k \}_{k\in \mathbb{N}_+}$ be the sequence of matrices in the dynamical system \eqref{eq:chi}, generated under Assumption~\ref{ass:delays}, and with   ${W}$ satisfying Assumption~\ref{assumption_weights} (i), (ii).  Define $K_1 \triangleq (2I-1) \cdot T+I \cdot D $, $C_2 \triangleq \frac{2\sqrt{(D+2)I}  (1+ \lbm^{-K_1})}{1- \lbm^{-K_1}}$, $\eta \triangleq \bar{m}^{K_1}$ and $\rho \triangleq (1-\eta)^{\frac{1}{K_1}}$.  Then we have for any $k\geq 0$:
	\begin{enumerate}[(i)]
		\item $\W^k$ is row stochastic;
		\item all the entries in the first $I$ columns of $\W^{k+K_1-1:k}$ are uniformly bounded below by $\eta  ;$
		\item  there exists a sequence of stochastic vectors $\{{\psi}^k\}_{k\geq 0}$ such that: i) for any $\ell \geq t\geq 0$, $\norm{\W^{\ell:t} -\bd{1}{{\psi}^t}^{\top}}_2 \leq C_2 \rho^{\ell-t}$;  ii) $\psi_i^k\geq \lbpsi  \,$ for all $i\in \mathcal{V}$.
	\end{enumerate}
\end{lemma}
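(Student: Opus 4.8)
The statement to be proved is Lemma~\ref{lm:Wscramb}, which is actually cited verbatim from the companion paper \cite[Lemma~17]{Tian_arxiv}. Since it is quoted as a known result, the "proof" here is really a reconstruction of why the ergodic/scrambling properties of the product matrices $\W^{\ell:t}$ hold. Let me sketch how I would establish it.

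\textbf{Proof plan for Lemma~\ref{lm:Wscramb}.}

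The plan is to establish the three claims in order, relying on the structure of the augmented matrix $\W^k$ and on Assumption~\ref{assumption_weights}(i)--(ii) together with the asynchronous model of Assumption~\ref{ass:delays}. First I would verify row stochasticity (claim (i)): each row of $\W^k$ either copies a row of $W$ (for the $i^k$-th block row, whose entries are $w_{i^k i^k}$ and $w_{i^k j}$ for in-neighbors $j$, summing to $1$ since $W\mathbf 1 = \mathbf 1$), or is a unit shift row (a single $1$ that moves a buffered $v$-variable down one ``delay slot''), which is trivially stochastic. Hence $\W^k \mathbf 1 = \mathbf 1$, and by induction any finite product $\W^{\ell:t}$ is row stochastic. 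This is the routine part.

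The heart of the argument is claim (ii), the uniform lower bound on the first $I$ columns of the length-$K_1$ product $\W^{k+K_1-1:k}$ with $K_1 = (2I-1)T + ID$. The idea is a ``reachability within $K_1$ steps'' argument: (a) any delayed slot in the augmented state is flushed into an actual agent state within $D+1$ steps (the shift rows march buffered $v$'s back to the top $I$ coordinates), contributing the $ID$ term; (b) once all mass lives in the top $I$ coordinates, the essentially cyclic rule (Assumption~\ref{ass:delays}(i)) guarantees every agent updates within any window of $T$ steps, so by the strong connectivity of $\GG$ (Assumption~\ref{ass:strong_conn}) information propagates from every node to every node; since a path in a strongly connected digraph on $I$ nodes has length at most $I-1$, and each ``hop'' may need to wait up to $T$ iterations for the relevant agent to activate, a window of roughly $(2I-1)T$ steps suffices for the product to have all its first-$I$-column entries strictly positive. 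Each nonzero entry that appears is a product of at most $K_1$ factors each $\ge \bar m$, so the entries are bounded below by $\eta = \bar m^{K_1}$. The careful bookkeeping here — precisely which augmented coordinates are reachable from which in a given number of steps, and that the count never exceeds $K_1$ — is the main obstacle; it requires the explicit (suppressed) form of $\W^k$ and a somewhat delicate combinatorial/graph-theoretic induction over the update sequence $\{i^k\}$.

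Finally, claim (iii) follows from (i)--(ii) by a standard weak-ergodicity argument for backward products of row-stochastic matrices. Having (ii), the coefficient of ergodicity (Dobrushin/Hajnal coefficient) of each block $\W^{k+K_1-1:k}$ is at most $1-\eta$; chaining $\lfloor (\ell-t)/K_1\rfloor$ such blocks gives a contraction of the ``disagreement'' by $(1-\eta)^{\lfloor(\ell-t)/K_1\rfloor}$, which one bounds by $C_2\rho^{\ell-t}$ with $\rho = (1-\eta)^{1/K_1}$ after absorbing the leftover factors and passing to the spectral norm (the constant $C_2$ comes from relating the $\ell_\infty$-type ergodicity bound to $\|\cdot\|_2$ on $(D+2)I$ coordinates, hence the $\sqrt{(D+2)I}$ factor). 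The limit vectors $\psi^t$ are defined as $\psi^t \triangleq \lim_{\ell\to\infty}(\W^{\ell:t})^\top e_1$ (any row of the rank-one limit), which exist by Cauchy's criterion from the contraction; they are stochastic as limits of rows of row-stochastic matrices, and the uniform positivity $\psi_i^k \ge \underline\psi$ for all $i\in\mathcal V$ is inherited from claim (ii), since every entry in the first $I$ columns of $\W^{\ell:t}$ (for $\ell - t \ge K_1$) is $\ge \eta$, so passing to the limit gives $\psi_i^t \ge \eta =: \underline\psi$. I would then set $\underline\psi = \eta$ and note that the restriction to the first $I$ coordinates is what matters downstream, since the augmented state's ``physical'' variables live there. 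As this lemma is imported from \cite{Tian_arxiv}, in the paper itself one would simply cite it; the above is the argument underpinning that citation.
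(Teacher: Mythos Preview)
Your proposal is correct in spirit: the paper does not prove Lemma~\ref{lm:Wscramb} at all but simply imports it from \cite[Lemma~17]{Tian_arxiv}, so there is nothing to compare against beyond the citation. Your reconstruction---row stochasticity by inspection of the augmented rows, the reachability/scrambling argument over a window of length $K_1=(2I-1)T+ID$ yielding the $\bar m^{K_1}$ lower bound on the first $I$ columns, and then the standard Dobrushin/Hajnal contraction to get the geometric decay and the limiting stochastic vectors $\psi^t$---is exactly the architecture of the proof in the companion paper, and your identification $\underline\psi=\eta$ is consistent with how the bound is used downstream (e.g., in the proof of Proposition~\ref{thm:lyapu}).
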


Note that  Lemma~\ref{lm:Wscramb} implies
\begin{equation}\label{eq:prod_mat}
	\hspace{-0.3cm}\bd{1}{{\psi}^t}^{\top}=  \lim_{n\to\infty} \W^{n:t} = (\lim_{n\to\infty} \W^{n:t+1} )\W\spe{t} =  \bd{1} {{\psi}^{t+1}}^{\top} \W\spe{t},
\end{equation}
and thus
$
{{\psi}^{t+1}}^{\top} \W\spe{t} = {{\psi}^t}^{\top}$, for all $t \geq 0.$
Then we define 
\begin{equation}\vspace{-0.2cm}
	\CV^{k} = {{\psi}^k}^{\top}{\augX}^k;
\end{equation}
  $\CV^{k}$ evolves according to the following dynamics:\vspace{-0.2cm}
\begin{equation}\label{eq:avg_dynamic_x}
	\CV^{k+1} =  {\bd{{\psi}}^0}^{\top}{\augX}^0+\sum_{l=0}^k  \gamma{\bd{{\psi}}^l}^{\top}\Delta  {\augX}^l.\vspace{-0.2cm}
\end{equation}
This can be shown by applying \eqref{eq:chi} recursively, so that
\begin{equation}\label{eq:x_evolution}\vspace{-0.2cm}
	{\augX}^{k+1} = \W^{n:0}{\augX}^0+\sum_{l=0}^k \W^{n:l}\gamma \Delta  {\augX}^l,
\end{equation}
and multiplying \eqref{eq:x_evolution} from the left by ${{\psi}^{k+1}}^{\top}$ and using~\eqref{eq:prod_mat}.  Taking the difference between~\eqref{eq:avg_dynamic_x} and~\eqref{eq:x_evolution} and applying Lemma~\ref{lm:Wscramb}  the consensus error $\CE^k$ can be bound as follows.

\begin{lemma}\label{lm:consensus_err}
	Under the condition of Lemma~\ref{lm:Wscramb},  $\{\CE^{k}\}$ satisfies\vspace{-0.2cm}
	\begin{equation}\label{lm:inter_eq1}
		\CE^{k+1} \leq  C_2 \, \rho^k \, \CE^{0} + C_2 \sum_{l=0}^k \rho^{k-l} \gamma \, \NT^l,\quad \forall k\geq 0. \vspace{-0.2cm}
	\end{equation}
\end{lemma}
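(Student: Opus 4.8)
The plan is to derive \eqref{lm:inter_eq1} directly from the recursive representation \eqref{eq:x_evolution} of $\augX^{k+1}$ together with the closed form \eqref{eq:avg_dynamic_x} for the running weighted average $\CV^{k+1}$, using the geometric mixing bound in Lemma~\ref{lm:Wscramb}(iii). First I would write the consensus error at step $k+1$ as the difference of these two expressions:
\[
\augX^{k+1} - \bm{1}\,\CV^{k+1} \;=\; \big(\W^{k:0} - \bm{1}\,{\psi^0}^{\top}\big)\augX^0 \;+\; \sum_{l=0}^k \big(\W^{k:l} - \bm{1}\,{\psi^l}^{\top}\big)\,\gamma\,\Delta\augX^l .
\]
Here I am implicitly using that $\bm{1}\,{\psi^{k+1}}^{\top}\W^{k:l} = \bm{1}\,{\psi^l}^{\top}$, which is the content of \eqref{eq:prod_mat} iterated; this is what makes the two ``limit'' terms line up columnwise so the subtraction produces exactly the matrix differences $\W^{k:l}-\bm{1}{\psi^l}^{\top}$.

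Next I would take norms and apply the triangle inequality, then bound each term using submultiplicativity $\|AB\|\le\|A\|_2\|B\|$ together with Lemma~\ref{lm:Wscramb}(iii)(i), which gives $\|\W^{k:l} - \bm{1}\,{\psi^l}^{\top}\|_2 \le C_2\,\rho^{k-l}$ (and $\le C_2\rho^{k}$ for $l=0$ in the first term). This yields
\[
\CE^{k+1} \;\le\; C_2\,\rho^{k}\,\|\augX^0 - \bm{1}\CV^0\| \;+\; C_2\sum_{l=0}^k \rho^{k-l}\,\gamma\,\|\Delta\augX^l\| .
\]
Finally I would identify $\|\augX^0 - \bm{1}\CV^0\| = \CE^0$ by definition of $\CE$, and observe that $\Delta\augX^l = e_{i^l}(\Delta x^l)^\top$ is a rank-one matrix whose Frobenius norm equals $\|\Delta x^l\| = \|\tl{x}_{i^l}^l - x_{i^l}^l\| = \NT^l$; substituting gives exactly \eqref{lm:inter_eq1}.

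The only slightly delicate point — and the step I would be most careful about — is justifying the algebraic cancellation that reduces $\augX^{k+1}-\bm{1}\CV^{k+1}$ to a sum of terms each carrying a factor $\W^{k:l}-\bm{1}{\psi^l}^{\top}$: one must check that \eqref{eq:avg_dynamic_x} is indeed obtained from \eqref{eq:x_evolution} by left-multiplication by ${\psi^{k+1}}^{\top}$ and repeated use of ${\psi^{l+1}}^{\top}\W^l = {\psi^l}^{\top}$, so that the $\bm{1}{\psi^l}^{\top}$ factors in $\bm{1}\CV^{k+1}$ match term by term with the $\W^{k:l}$ factors in $\augX^{k+1}$. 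Everything else is a routine norm estimate. Note also that the stated inequality uses the same constants $C_2$ and $\rho$ as Lemma~\ref{lm:Wscramb}, so no re-optimization of constants is needed; the result is essentially a transcription of the mixing bound into the optimization variables' notation.
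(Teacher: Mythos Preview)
Your proposal is correct and follows exactly the approach the paper indicates (subtract \eqref{eq:avg_dynamic_x} from \eqref{eq:x_evolution} and apply Lemma~\ref{lm:Wscramb}(iii)). The only step you glossed over is why the first term produces $C_2\rho^k\CE^0$ rather than $C_2\rho^k\|\augX^0\|$: since $\W^{k:0}$ is row-stochastic and $\psi^0$ is a stochastic vector, $(\W^{k:0}-\bm{1}{\psi^0}^\top)\bm{1}=0$, so $(\W^{k:0}-\bm{1}{\psi^0}^\top)\augX^0=(\W^{k:0}-\bm{1}{\psi^0}^\top)(\augX^0-\bm{1}\CV^0)$, after which your norm estimate gives the stated bound.
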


To establish similar bounds for $E_y^k$,  we build on the fact that  the  gradient tracking update (\ref{eq:asy-track}) is an instance of the P-ASY-SUM-PUSH   in \cite{Tian_arxiv}, as shown next.
Define
\begin{align*}
&	g^k = [\nabla f_1(x_1^k), \nabla f_2(x_2^k), \cdots, \nabla f_I(x_I^k)]^\top, \\
&	\avg{g}^k = (1/I) \cdot  (g^k )^\top  \mathbf{1}, \qquad \TE^k \triangleq \norm{{y}_{i^k}^{k}-\avg{g}^k}.
\end{align*}
We can   prove the following bound for $\TE^{k}$.
\begin{lemma}\label{lm:tracking_err}
	Let $\{x^k, y_{i^k}^k\}_{k=0}^{\infty}$ be the sequence generated by the Algorithm~\ref{alg:ASYSCA} under Assumption~\ref{ass:strong_conn},~\ref{assumption_weights}, and~\ref{ass:delays}.  Then, there exists a constant $C_1=\frac{4\sqrt{2S}(1+\lbm^{-K_1})}{I\,\eta\,\rho(1-\lbm^{K_1})}$ such that \vspace{-0.1cm}
	\begin{equation}\label{lm:inter_eq2}
		\TE^{k+1}  \leq  3 \, C_1 \, l \sum_{l=0}^k \rho^{k-l}  \left(  \CE^l  + \gamma  \NT^l \right)    + C_1 \rho^k \norm{{g}\spe{0}}.\vspace{-0.1cm}
	\end{equation}

\end{lemma}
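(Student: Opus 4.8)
The plan is to recognize the robust gradient-tracking module (S.3) of \ASYSCA/ as an instance of the perturbed asynchronous sum-push scheme (P-ASY-SUM-PUSH) analyzed in the companion paper~\cite{Tian_arxiv}, invoke its asynchrony-robust tracking guarantee, and then dominate the resulting perturbation terms by the consensus error $\CE^l$ and the step-length $\NT^l$. Concretely, I would first check that \eqref{eq:asy-sum-step}--\eqref{eq:mass-buff-update}, run with the column-stochastic matrix $A$ (Assumption~\ref{assumption_weights}(i),(iii)), is exactly the P-ASY-SUM-PUSH iteration of~\cite{Tian_arxiv} fed with the additive perturbation $e^l \triangleq \nabla f_{i^l}(x_{i^l}^{l+1}) - \nabla f_{i^l}(x_{i^l}^l)$ injected at node $i^l$ at iteration $l$ (this is the quantity $\epsilon$ in \eqref{eq:asy-sum-step}). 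The mass-buffer update \eqref{eq:mass-buff-update} ensures that every unit of mass an agent generates for an out-neighbour is eventually processed exactly once; together with column stochasticity of $A$, this restores the mass-preservation identities $\sum_{i=1}^I \phi_i^k = I$ (so the ratios $z_i^k/\phi_i^k$ are well defined) and $\sum_{i=1}^I z_i^k = \sum_{i=1}^I \nabla f_i(x_i^k)$ for all $k$, whence the tracked target is $\bar{g}^k = (1/I)\sum_{i=1}^I z_i^k$.

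Next I would invoke the tracking-error estimate established for P-ASY-SUM-PUSH in~\cite{Tian_arxiv} under Assumptions~\ref{ass:strong_conn}, \ref{assumption_weights}, \ref{ass:delays}, which provides the constant $C_1$ in the statement and the same contraction factor $\rho\in(0,1)$ as in Lemma~\ref{lm:Wscramb}, yielding
\[
\TE^{k+1} \;=\; \Big\| y_{i^{k+1}}^{k+1} - \frac{1}{I}\sum_{i=1}^I z_i^{k+1} \Big\| \;\le\; C_1\,\rho^{k}\,\|g^{0}\| \;+\; C_1 \sum_{l=0}^k \rho^{\,k-l}\,\|e^l\| ,
\]
where the first term accounts for the initial mass $z_i^0 = \nabla f_i(x_i^0)$, so that $\|z^0\| = \|g^0\|$.

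It then remains to control the perturbation norm $\|e^l\|$. By $l$-smoothness of $f_{i^l}$ (Assumption~\ref{ass:cost_functions}(ii)), $\|e^l\| \le l\,\|x_{i^l}^{l+1} - x_{i^l}^l\|$. Expanding the consensus update \eqref{eq:asy-consensus}, using $v_{i^l}^{l+1} = x_{i^l}^l + \gamma\,\Delta x^l$ and row stochasticity of $W$ (so $w_{i^l i^l} + \sum_{j\in\mathcal{N}_{i^l}^{\text{in}}} w_{i^l j} = 1$), one obtains
\[
x_{i^l}^{l+1} - x_{i^l}^l \;=\; w_{i^l i^l}\,\gamma\,\Delta x^l \;+\; \sum_{j\in\mathcal{N}_{i^l}^{\text{in}}} w_{i^l j}\,\big(v_j^{\tau_{i^l j}^l} - x_{i^l}^l\big) .
\]
The key observation is that, since $l - D \le \tau_{i^l j}^l \le l$ under Assumption~\ref{ass:delays}(ii) (a consequence of the $\max$-recursion defining $\tau$), both $v_j^{\tau_{i^l j}^l}$ and $x_{i^l}^l$ are rows of the \emph{same} augmented matrix $\augX^l$, hence each lies within Euclidean distance $\CE^l$ of $\CV^l = (\psi^l)^\top\augX^l$; therefore $\|v_j^{\tau_{i^l j}^l} - x_{i^l}^l\| \le 2\,\CE^l$. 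Using $w_{i^l i^l}\le 1$, $\sum_j w_{i^l j}\le 1$, and $\|\Delta x^l\| = \NT^l$ gives $\|x_{i^l}^{l+1}-x_{i^l}^l\| \le \gamma\,\NT^l + 2\,\CE^l$, and so
\[
\|e^l\| \;\le\; l\,\big(\gamma\,\NT^l + 2\,\CE^l\big) \;\le\; 3\,l\,\big(\CE^l + \gamma\,\NT^l\big).
\]
Substituting this into the bound of the previous paragraph yields exactly \eqref{lm:inter_eq2}.

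I expect the main obstacle to be the reduction step: verifying that \ASYSCA/'s tracking module is \emph{verbatim} the P-ASY-SUM-PUSH of~\cite{Tian_arxiv} driven by the gradient-increment perturbation $e^l$, so that its robust tracking guarantee transfers unchanged --- in particular, handling that the total injected mass is time-varying (a fresh $e^l$ enters at every iteration) and that the error in $\TE^{k+1}$ must be read off at the agent $i^{k+1}$ that uses $y$ next, not necessarily at the last active agent. By contrast, the perturbation bound is essentially bookkeeping once one notices that all vectors entering the $x$-update live as rows of $\augX^l$; its only mild subtlety is the verification that $\tau_{i^l j}^l \ge l - D$.
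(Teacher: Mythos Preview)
Your proposal is correct and follows essentially the same approach as the paper: identify the tracking step as an instance of P-ASY-SUM-PUSH from~\cite{Tian_arxiv} with perturbation $\epsilon^l = \nabla f_{i^l}(x_{i^l}^{l+1}) - \nabla f_{i^l}(x_{i^l}^l)$, invoke the robust tracking bound to obtain $\TE^{k+1}\le C_1\rho^k\|g^0\| + C_1\sum_{l=0}^k\rho^{k-l}\|\epsilon^l\|$, and then control $\|\epsilon^l\|$ via smoothness and the observation that all vectors appearing in the $x$-update are rows of $\augX^l$. The paper simply defers the last step to~\cite[Prop.~18]{Tian_arxiv}, whereas you spell it out explicitly; otherwise the arguments coincide.
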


\begin{proof}
See Appendix~\ref{ap:lm:tracking_err}.\end{proof}

 Finally, using Lemma~\ref{lm:consensus_err} and Lemma~\ref{lm:tracking_err}, we can bound  $\sum_{t=0}^k (\CE^t)^2$ and $\sum_{t=0}^k (\TE^t)^2$ in terms of $\sum_{t=0}^{k} \gamma^2 (\NT^t)^2$,  and  $\FY^k$ in terms of $\CE^k$ and $\TE^k$, as given below.
\begin{lemma}\label{lm:sqr_sum}
	Under the setting of Lemma~\ref{lm:consensus_err} and Lemma~\ref{lm:tracking_err}, we have: for any $k \geq 1$,\vspace{-0.2cm}
	\begin{align}
	&  \sum_{t=0}^k (\CE^t)^2 \leq c_{x} + \varrho_{x} \sum_{t=0}^{k} \gamma^2 (\NT^t)^2 , \nonumber \\
	&  \sum_{t=0}^k (\TE^t)^2 \leq   c_{y} + \varrho_{y} \sum_{t=0}^{k} \gamma^2 (\NT^t)^2,   \nonumber \\
	&\label{eq:yct}  \FY^k  \leq   2 I^2 (\TE^k)^2 + 8 I\,l^2\, (\CE^k)^2.
	\end{align}
	with $\varrho_{x} \triangleq \frac{2C_2^2}{(1-\rho)^2},$ and   $\varrho_{y} \triangleq \frac{36 \left( C_1 L  \right)^2 \left(2 C_2^2 + (1-\rho)^2 \right)}{(1-\rho)^4}.$
	(The expressions of the constants $c_{x}$ and $c_{y}$ are omitted as they are not relevant).
\end{lemma}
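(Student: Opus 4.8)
\textbf{Proof plan for Lemma~\ref{lm:sqr_sum}.}

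The plan is to derive the two summed bounds directly from the recursions in Lemma~\ref{lm:consensus_err} and Lemma~\ref{lm:tracking_err}, and then obtain \eqref{eq:yct} as a pointwise consequence of the tracking/consensus structure together with $L$-smoothness. The key technical tool for the first two bounds is a standard estimate for convolution-type sums: if $a_{k+1}\le \sum_{l=0}^k \rho^{k-l} b_l + \rho^k c$ with $\rho\in(0,1)$ and $b_l\ge 0$, then squaring, applying Cauchy--Schwarz in the weighted form $\big(\sum_l \rho^{k-l} b_l\big)^2 \le \big(\sum_l \rho^{k-l}\big)\big(\sum_l \rho^{k-l} b_l^2\big) \le \frac{1}{1-\rho}\sum_{l=0}^k \rho^{k-l} b_l^2$, and then summing over $k$ and swapping the order of summation, one gets $\sum_{t=0}^k a_t^2 \le \frac{2}{(1-\rho)^2}\sum_{t=0}^k b_t^2 + (\text{const depending on }c,\rho)$. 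This is exactly the mechanism that produces the factor $\varrho_x = \frac{2C_2^2}{(1-\rho)^2}$ when applied to \eqref{lm:inter_eq1} with $b_l = C_2\gamma\NT^l$; the residual term from $C_2\rho^k\CE^0$ collects into $c_x$.

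First I would carry out this argument for $\CE^k$: apply the convolution-sum lemma to \eqref{lm:inter_eq1} to get $\sum_{t=0}^k (\CE^t)^2 \le c_x + \varrho_x \sum_{t=0}^k \gamma^2 (\NT^t)^2$ with $\varrho_x = 2C_2^2/(1-\rho)^2$. Next, for $\TE^k$, the recursion \eqref{lm:inter_eq2} has the driving term $3C_1 l \sum_{l=0}^k \rho^{k-l}(\CE^l + \gamma\NT^l)$; I would split $(\CE^l+\gamma\NT^l)^2 \le 2(\CE^l)^2 + 2\gamma^2(\NT^l)^2$, apply the same convolution-sum estimate, and then substitute the bound on $\sum_t (\CE^t)^2$ just obtained. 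Tracking the constants: the $9 (C_1 l)^2$ from squaring $3C_1 l$, combined with $L = Il$ so $l^2 = L^2/I^2$, the factor $2/(1-\rho)^2$ from the convolution lemma, the extra factor $2$ from the split, and then the additional $\varrho_x = 2C_2^2/(1-\rho)^2$ picked up when $\sum(\CE^t)^2$ is re-expanded, should combine to give precisely $\varrho_y = \frac{36(C_1 L)^2(2C_2^2 + (1-\rho)^2)}{(1-\rho)^4}$ — the $(2C_2^2 + (1-\rho)^2)$ factor arising because the $\gamma\NT^l$ contributes directly (the $(1-\rho)^2$ piece after normalization) while the $\CE^l$ contributes through $\varrho_x$ (the $2C_2^2$ piece). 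The constant $c_y$ absorbs the $C_1\rho^k\|g^0\|$ residual plus the $c_x$-dependent leftover, and is declared irrelevant.

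The last inequality \eqref{eq:yct} is pointwise, not summed. I would write $Iy_{i^k}^k - \nabla F(x_{i^k}^k) = I(y_{i^k}^k - \avg g^k) + I\avg g^k - \nabla F(x_{i^k}^k)$, and note $I\avg g^k - \nabla F(x_{i^k}^k) = \sum_{i=1}^I(\nabla f_i(x_i^k) - \nabla f_i(x_{i^k}^k))$, whose norm is at most $l\sum_{i=1}^I \|x_i^k - x_{i^k}^k\| \le l\sum_{i=1}^I(\|x_i^k - \bar x^k\| + \|x_{i^k}^k - \bar x^k\|)$ by $l$-smoothness of each $f_i$; Cauchy--Schwarz over $i$ then bounds this by something of order $\sqrt{I}\cdot l\cdot(\text{consensus-type norm})$, and since $\CE^k$ is built to dominate the consensus disagreement of the $x_i^k$'s (it is the norm of $\augX^k - \mathbf 1 \CV^k$, whose top $I$ block contains the $x_i^k - \CV^k$ deviations), one gets $\|I\avg g^k - \nabla F(x_{i^k}^k)\|^2 \le 8Il^2(\CE^k)^2$ after using $\|a+b\|^2\le 2\|a\|^2+2\|b\|^2$. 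Combined with $\|I(y_{i^k}^k - \avg g^k)\|^2 = I^2(\TE^k)^2$ and one more $2(\cdot)^2 + 2(\cdot)^2$ split, this yields $\FY^k \le 2I^2(\TE^k)^2 + \dots + 8Il^2(\CE^k)^2$.

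The main obstacle is bookkeeping, not ideas: getting the constants to come out exactly as $\varrho_x$ and $\varrho_y$ requires being careful about which Cauchy--Schwarz normalization is used ($\sum_l\rho^{k-l}\le 1/(1-\rho)$ versus $\le 1/(1-\rho)$ applied twice after summing over $k$), and about the interaction between the two recursions (the $\TE$ bound depends on $\CE$, so the $\varrho_y$ constant must inherit a copy of the structure that produced $\varrho_x$). A secondary subtlety is ensuring that, in \eqref{eq:yct}, the metric $\CE^k = \|\augX^k - \mathbf 1\CV^k\|$ genuinely upper-bounds $\sum_i\|x_i^k - \bar x^k\|^2$ up to a benign constant — this uses that $\CV^k = (\psi^k)^\top\augX^k$ is a convex combination (Lemma~\ref{lm:Wscramb}(iii)), so $\bar x^k$ and $\CV^k$ are comparable, and that the stacked vector $\augX^k$ repeats the $v$-blocks but its first $I$ rows carry the $x_i^k$'s; I would state this comparison explicitly as the one nontrivial structural input. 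Everything else is routine summation and norm manipulation, which is why the statement flags $c_x$ and $c_y$ as omitted.
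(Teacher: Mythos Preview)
Your proposal is correct and follows essentially the paper's approach: the paper defers the first two inequalities to \cite[Lemma~26]{Tian_arxiv}, whose argument is exactly the convolution--Cauchy--Schwarz--Fubini mechanism you outline (and your constant tracking for $\varrho_x,\varrho_y$ is right), and proves only \eqref{eq:yct} explicitly. One simplification worth adopting: for \eqref{eq:yct} the paper inserts $\pm\nabla F(\CV^k)$ rather than $\pm\nabla F(\bar x^k)$ after the split at $I\bar g^k$, which sidesteps your ``secondary subtlety'' of comparing $\bar x^k$ with $\CV^k$ entirely---since $\CE^k=\|\augX^k-\mathbf 1\CV^k\|$ by definition, both $\sum_j\|x_j^k-\CV^k\|^2$ and $\|x_{i^k}^k-\CV^k\|^2$ are bounded by $(\CE^k)^2$ with no further work.
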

\begin{proof}
The proof of the first two results follows similar steps as in that of \cite[Lemma~26]{Tian_arxiv} and thus is omitted. We prove only the last inequality, as follows:
\begin{equation*}
		\begin{aligned}
			  & \FY^k  = \norm{I y_{i^k}^k \pm I \bar{g}^k- \nabla F (x_{i^k}^k)}^2\\
			& \leq 2 I^2 (\TE^k)^2 + 2 \Big\|\sum_{j=1}^I f_j (x_j^k) \pm F (\CV^k)-  \nabla F (x_{i^k}^k)\Big\|^2\\
			& \leq   2 I^2 (\TE^k)^2 + 8 I\,l^2\, (\CE^k)^2  .
		\end{aligned}\vspace{-0.6cm}
	\end{equation*}
\end{proof}

\subsection{Step 2: Proof of \eqref{eq:opt_bound}-\eqref{eq:dx_bound}  under  the LT condition}

\subsubsection{A new Lyapunov function and its descent}\label{sec:asymptotic_convergence}
We begin studying descent of the objective function $U$ along the trajectory of the algorithm; we have the following result. \begin{lemma}\label{lem:v_descent_1}
	Let $\{(x^k,y^k)\}$ be the sequence generated by Algorithm~\ref{alg:ASYSCA} under Assumptions~\ref{ass:cost_functions} and~\ref{assump:surrogate}, it holds
	\begin{align}\label{eq:v_descent_1}
		U(v_{i^k}^{k+1})  \leq & \,U(x_{i^k}^k)-  \gamma \left( \tilde{\mu} - \frac{\gamma L}{2} \right) \norm{\dx^k}^2  \nonumber \\
		&+  \gamma \cdot \left(\nabla F(x_{i^k}^{k}) - I y_{i^k}^k  \right)^\top \dx^k.
	\end{align}
\end{lemma}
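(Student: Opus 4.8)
The plan is to obtain the claimed bound on $U(v_{i^k}^{k+1})$ by combining three ingredients: the $L$-smoothness descent lemma applied to the relaxation step, the optimality (variational) condition of the strongly convex subproblem~\eqref{eq:sca}, and the gradient-consistency property of the surrogate (Assumption~\ref{assump:surrogate}(i)). Throughout, only agent $i^k$ moves at iteration $k$, so $v_{i^k}^{k+1} = x_{i^k}^k + \gamma\,\dx^k$ with $\dx^k = \widetilde{x}_{i^k}^k - x_{i^k}^k$, and all the action is in the $i^k$-th coordinate; the other blocks are unchanged. To lighten notation I write $i$ for $i^k$, $x = x_i^k$, $\widetilde{x} = \widetilde{x}_i^k$, $d = \dx^k$, and $\pi = I y_i^k - \nabla f_i(x)$ for the linear-term coefficient appearing in $\widehat{U}_i$.

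First I would apply $L$-smoothness of $F$ (Assumption~\ref{ass:cost_functions}(ii)) together with convexity of $G$ to the point $v_i^{k+1} = x + \gamma d$. Since $F$ is $L$-smooth,
\begin{equation*}
F(x+\gamma d) \le F(x) + \gamma\,\nabla F(x)^\top d + \frac{\gamma^2 L}{2}\,\|d\|^2,
\end{equation*}
and by convexity of $G$,
\begin{equation*}
G(x+\gamma d) = G\big((1-\gamma)x + \gamma\widetilde{x}\big) \le (1-\gamma)G(x) + \gamma G(\widetilde{x}).
\end{equation*}
Adding these gives
\begin{equation*}
U(v_i^{k+1}) \le U(x) + \gamma\Big(\nabla F(x)^\top d + G(\widetilde{x}) - G(x)\Big) + \frac{\gamma^2 L}{2}\|d\|^2.
\end{equation*}

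Next I would control the bracketed term using the optimality of $\widetilde{x}$ for the strongly convex subproblem. Since $\widetilde{x}$ minimizes $\widetilde{f}_i(\cdot;x) + \pi^\top(\cdot - x) + G(\cdot)$ over $\mathcal{K}$ and this objective is $\widetilde{\mu}$-strongly convex, comparing its value at $\widetilde{x}$ against its value at $x$ yields
\begin{equation*}
\widetilde{f}_i(\widetilde{x};x) + \pi^\top d + G(\widetilde{x}) + \frac{\widetilde{\mu}}{2}\|d\|^2 \le \widetilde{f}_i(x;x) + G(x).
\end{equation*}
Using $\widetilde{f}_i(x;x) - \widetilde{f}_i(\widetilde{x};x) \le -\nabla\widetilde{f}_i(x;x)^\top d = -\nabla f_i(x)^\top d$ (the inequality from convexity of $\widetilde{f}_i(\cdot;x)$, the equality from Assumption~\ref{assump:surrogate}(i)), and substituting $\pi = I y_i^k - \nabla f_i(x)$, this rearranges to
\begin{equation*}
G(\widetilde{x}) - G(x) \le -\big(I y_i^k\big)^\top d - \frac{\widetilde{\mu}}{2}\|d\|^2.
\end{equation*}
Plugging this into the smoothness bound, the $\nabla f_i(x)$ terms do not cancel against $\nabla F(x)$ (they differ by $\sum_{j\ne i}\nabla f_j$), so I keep $\nabla F(x)$ as is and write $\nabla F(x)^\top d - (I y_i^k)^\top d = (\nabla F(x) - I y_i^k)^\top d$; this produces exactly
\begin{equation*}
U(v_i^{k+1}) \le U(x) - \gamma\Big(\widetilde{\mu} - \frac{\gamma L}{2}\Big)\|d\|^2 + \gamma\,(\nabla F(x) - I y_i^k)^\top d,
\end{equation*}
which is~\eqref{eq:v_descent_1}.

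I do not anticipate a serious obstacle here — this is the standard SCA descent argument and the only subtlety is bookkeeping the linear coefficient $\pi$ correctly so that the cross term emerges as $(\nabla F(x_{i^k}^k) - I y_{i^k}^k)^\top \dx^k$ rather than something involving $\nabla f_{i^k}$. The mildly delicate point, if one wants full rigor, is justifying that $v_{i^k}^{k+1} \in \mathcal{K}$ (so that $G$ is defined there and $F$'s smoothness on the open set $\mathcal{O} \supset \mathcal{K}$ applies): this follows because $\widetilde{x}_{i^k}^k \in \mathcal{K}$ by the constraint in~\eqref{eq:sca}, $x_{i^k}^k \in \mathcal{K}$, $\gamma \in (0,1]$, and $\mathcal{K}$ is convex. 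One should also note that strong convexity of $\widetilde{f}_i(\cdot;x)$ is what furnishes the $\frac{\widetilde{\mu}}{2}\|d\|^2$ term; Assumption~\ref{assump:surrogate}(ii) guarantees $\widetilde{\mu} > 0$ uniformly, so the bound is agent-independent.
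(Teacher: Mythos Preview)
Your overall strategy is sound and essentially parallels the paper's, but there is a genuine slip that costs you a factor of two in the strong-convexity constant. As written, your chain of inequalities yields
\[
U(v_{i^k}^{k+1}) \le U(x_{i^k}^k) - \gamma\Big(\tfrac{\widetilde{\mu}}{2} - \tfrac{\gamma L}{2}\Big)\|\dx^k\|^2 + \gamma\,(\nabla F(x_{i^k}^k) - I y_{i^k}^k)^\top \dx^k,
\]
not the stated bound with $\widetilde{\mu}$. The loss occurs at the step ``$\widetilde{f}_i(x;x) - \widetilde{f}_i(\widetilde{x};x) \le -\nabla\widetilde{f}_i(x;x)^\top d$ (from convexity).'' You already extracted $\tfrac{\widetilde{\mu}}{2}\|d\|^2$ from the function-value comparison at the minimizer; bounding $\widetilde{f}_i(x;x)-\widetilde{f}_i(\widetilde{x};x)$ via mere convexity then leaves only that single $\tfrac{\widetilde{\mu}}{2}$ in the final estimate. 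If you trace your own substitutions carefully, step~4 gives $G(\widetilde{x})-G(x)\le -(Iy_i^k)^\top d - \tfrac{\widetilde{\mu}}{2}\|d\|^2$, and plugging this into step~1 produces $\tfrac{\widetilde{\mu}}{2}$, not $\widetilde{\mu}$.

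The fix is minor: in that same step invoke \emph{strong} convexity of $\widetilde{f}_i(\cdot;x)$ at $x$, i.e.
\[
\widetilde{f}_i(x;x) - \widetilde{f}_i(\widetilde{x};x) \le -\nabla\widetilde{f}_i(x;x)^\top d - \tfrac{\widetilde{\mu}}{2}\|d\|^2,
\]
which adds the missing $\tfrac{\widetilde{\mu}}{2}\|d\|^2$ and recovers the full $\widetilde{\mu}$. Alternatively, and this is what the paper does, bypass function values altogether: use the first-order optimality (variational) inequality of~\eqref{eq:sca} at $z=x_{i^k}^k$ and then the gradient-monotonicity form of strong convexity, $(\nabla\widetilde{f}_i(\widetilde{x};x)-\nabla\widetilde{f}_i(x;x))^\top d \ge \widetilde{\mu}\|d\|^2$, which delivers $\widetilde{\mu}$ in one shot.
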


\begin{proof}
	Applying the first order optimality condition to~\eqref{eq:sca} and invoking the strong convexity of $\tf_{i^k}$ (Assumption~\ref{assump:surrogate})
	we have\vspace{-0.1cm}
	\begin{equation}\label{eq:mini_prin}
	\begin{aligned}
	&	 - (\dx^k)^{\top} Iy_{i^k}^k + G(x_{i^k}^k) - G(\tl{x}_{i^k}^k) \\
	&	 \geq - (\dx^k)^{\top} (\nabla f_{i^k}(x_{i^k}^k)-\nabla \widetilde{f}_{i^k}(\tl{x}_{i^k}^k;x_{i^k}^k))  \\
	&	=  (\dx^k)^{\top} \big(\nabla \widetilde{f}_{i^k}(\tl{x}_{i^k}^k;x_{i^k}^k) - \nabla \widetilde{f}_{i^k}(x_{i^k}^k;x_{i^k}^k) \big) \geq \tilde{\mu} \cdot \norm{\dx^k}^2.
	\end{aligned}
	\end{equation}
	As $F$ is $L$-smooth, applying the descent lemma gives\vspace{-0.2cm}
	\begin{align*}
		\begin{split}
			& F(v_{i^k}^{k+1})  \leq F(x_{i^k}^{k}) + \gamma \cdot \nabla F(x_{i^k}^{k})^\top  \dx^k + \frac{L}{2} \gamma^2 \norm{\dx^k}^2 \\
			& =  F(x_{i^k}^{k}) + \gamma \cdot (I y_{i^k}^k )^\top  \dx^k  + \gamma \cdot \left(\nabla F(x_{i^k}^{k}) - I y_{i^k}^k  \right)^\top   \dx^k   + \frac{L}{2} \gamma^2 \norm{\dx^k}^2 \\
			& \overset{\eqref{eq:mini_prin}}{\leq}  F(x_{i^k}^{k}) + \gamma \left( G(x_{i^k}^k) - G(\tl{x}_{i^k}^k) - \tilde{\mu} \norm{\dx^k}^2 \right)   + \frac{L}{2} \gamma^2 \norm{\dx^k}^2  + \gamma \cdot \left(\nabla F(x_{i^k}^{k}) - I y_{i^k}^k  \right)^\top \dx^k.
		\end{split}
	\end{align*}
	 By the convexity of $G$, we have
	\begin{equation*}
	\gamma \left( G(x_{i^k}^k) - G(\tl{x}_{i^k}^k)  \right)  \leq G(x_{i^k}^k) - G(v_{i^k}^{k+1}).
	\end{equation*}
	Combining the above two results proves~\eqref{eq:v_descent_1}.\vspace{-0.2cm}
\end{proof}

We build now on (\ref{eq:v_descent_1}) and establish descent on  a suitable defined Lyapunov function.
Define the mapping $ \widetilde{U}: \mathbb{R}^{S\times n} \to \mathbb{R}^S$ as $\widetilde{U}(\augX)=[U(\augx_1),\cdots,U(\augx_S)]^{\top}$ for $\augX = [\augx_1,\cdots, \augx_S]^\top \in \mathbb{R}^{S \times n}$.  That is, $\widetilde{U}(\augX)$  is a vector constructed by stacking the value of the objective function $U$ evaluated at each local variable $h_i$. Recalling the definition of the weights ${\psi}^{k}$ (cf. Lemma \ref{lm:Wscramb}), we introduce the Lyapunov function \vspace{-0.2cm}\begin{equation}\label{eq:Ly-convex}
	\Ly/^k\triangleq {{\psi}^{k}}^{\top}  \widetilde{U} (\augX^k),\vspace{-0.2cm}
\end{equation}
and study next its descent properties. 

\begin{proposition}\label{thm:lyapu}
	Let $\{(x^k,v^k,y^k)\}$ be the sequence generated by Algorithm~\ref{alg:ASYSCA} under Assumptions~\ref{ass:cost_functions}, \ref{ass:strong_conn},~\ref{assump:surrogate}, and  \ref{assumption_weights}.  Then,
	\begin{equation}\label{eq:lyapu}
		\begin{aligned}
		&	\Ly/^{k+1}
			\leq \Ly/^0 \\
		&  ~- \sum_{t=0}^k (\NT^t)^2  \gamma \left( \eta \tilde{\mu} - \gamma \left( \frac{L}{2}   +   l \, I^{\frac{3}{2}}\sqrt{ \varrho_{x}}  +  I \sqrt{\varrho_{y}}  \right)\right) + C,
		\end{aligned}
	\end{equation}
	for all $k\geq 0$, where $C$ is some constant independent of $\gamma$ and $k$; and   $\varrho_{x}$ and $\varrho_{y}$ are defined in Lemma~\ref{lm:sqr_sum}.
\end{proposition}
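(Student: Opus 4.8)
The plan is to telescope the one‑step descent from Lemma \ref{lem:v_descent_1} over the weighting vectors $\psi^k$, exploiting the key identity ${\psi^{k+1}}^\top \W^k = {\psi^k}^\top$ from \eqref{eq:prod_mat}, and then control the cross‑term perturbation $\gamma(\nabla F(x_{i^k}^k) - I y_{i^k}^k)^\top \dx^k$ by the tracking/consensus bounds of Lemma \ref{lm:sqr_sum}. First I would write $\Ly^{k+1}-\Ly^k = {\psi^{k+1}}^\top \widetilde U(\augX^{k+1}) - {\psi^k}^\top \widetilde U(\augX^k)$ and observe that, by construction of $\W^k$, only the $i^k$‑th block of $\augX$ changes value (it becomes $v_{i^k}^{k+1}$) while the remaining blocks are shuffled/shifted without changing value. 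Combining the shuffling structure of $\W^k$ (Lemma \ref{lm:Wscramb}(i), row stochasticity) with the identity ${\psi^{k+1}}^\top\W^k={\psi^k}^\top$, the sum $\sum_i \psi_i^{k+1} U(\augX_i^{k+1})$ telescopes against $\sum_i \psi_i^k U(\augX_i^k)$ up to the single replaced coordinate, yielding something of the form
\begin{equation*}
\Ly^{k+1} - \Ly^k \;\le\; \psi_{i^k}^{k+1}\bigl(U(v_{i^k}^{k+1}) - U(x_{i^k}^k)\bigr).
\end{equation*}
(The precise bookkeeping of which $\psi$‑entry multiplies the changed block is where one must be careful; this is essentially \cite[Lemma~17]{Tian_arxiv} machinery applied to $\widetilde U$ instead of $\augX$ itself.)

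Next I would insert Lemma \ref{lem:v_descent_1} into this inequality. Using $\psi_{i^k}^{k+1}\ge \eta$ (the lower bound $\lbpsi$; note $\eta=\bar m^{K_1}$ plays the role of $\lbpsi$ here) on the negative quadratic term $-\gamma(\tilde\mu-\tfrac{\gamma L}{2})\|\dx^k\|^2$, and bounding $\psi_{i^k}^{k+1}\le 1$ on the cross term, I get
\begin{equation*}
\Ly^{k+1}-\Ly^k \;\le\; -\gamma\Bigl(\eta\tilde\mu - \tfrac{\gamma L}{2}\Bigr)(\NT^k)^2 + \gamma\bigl(\nabla F(x_{i^k}^k)-Iy_{i^k}^k\bigr)^\top \dx^k.
\end{equation*}
For the cross term I would use Young's inequality, $\gamma(\nabla F(x_{i^k}^k)-Iy_{i^k}^k)^\top\dx^k \le \tfrac{\gamma}{2a}\FY^k + \tfrac{\gamma a}{2}(\NT^k)^2$ for a suitable $a$, then sum over $t=0,\dots,k$ and invoke Lemma \ref{lm:sqr_sum} to replace $\sum_t \FY^t$ by $c + (\text{const})\sum_t \gamma^2(\NT^t)^2$ — here the constants $8Il^2\varrho_x$ and $2I^2\varrho_y$ surface, and after taking square roots / adjusting $a$ one recovers exactly the coefficients $l I^{3/2}\sqrt{\varrho_x}$ and $I\sqrt{\varrho_y}$ in \eqref{eq:lyapu}, while the accumulated $c_x,c_y$ and the $\rho^k$ transients get absorbed into the constant $C$. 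Telescoping the summed inequality from $0$ to $k$ gives $\Ly^{k+1}\le \Ly^0 - \sum_{t=0}^k\gamma(\eta\tilde\mu - \gamma(\tfrac L2 + lI^{3/2}\sqrt{\varrho_x} + I\sqrt{\varrho_y}))(\NT^t)^2 + C$, as claimed.

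The main obstacle I anticipate is the first step: correctly tracking how $\widetilde U(\augX^k)$ transforms under $\W^k$ and why the telescoping leaves only the single term $\psi_{i^k}^{k+1}(U(v_{i^k}^{k+1})-U(x_{i^k}^k))$. Unlike the consensus recursion \eqref{eq:chi}, where $\W^k$ acts linearly on $\augX$, here $U$ is applied coordinatewise \emph{after} the linear map, so one cannot simply write $\widetilde U(\W^k\augX) = \W^k\widetilde U(\augX)$ — this holds only because $\W^k$ restricted to the unchanged coordinates is a $0$–$1$ permutation/shift matrix, so it merely relabels the entries of $\widetilde U(\augX^k)$, and the $i^k$‑row of $\W^k$ (which does mix values) multiplies blocks that are all equal to the pre‑update value $x_{i^k}^k$ except for the new $v_{i^k}^{k+1}$. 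Making this argument airtight requires the explicit block structure of $\W^k$ recalled (in the commented‑out display) in Sec.~\ref{sec:prelim}, together with row‑stochasticity of $W$ and the invariance ${\psi^{k+1}}^\top\W^k={\psi^k}^\top$. A secondary but routine difficulty is ensuring the step‑size‑independence of $C$: the transient terms $C_1\rho^k\|g^0\|$, $C_2\rho^k\CE^0$ and the constants $c_x,c_y$ from Lemma \ref{lm:sqr_sum} all carry no $\gamma$ (or only bounded powers of $\gamma\le 1$), so their contribution to the telescoped sum is uniformly bounded; I would state this explicitly rather than recompute it.
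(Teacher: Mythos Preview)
Your first step contains a genuine error. You claim that the $i^k$-row of $\W^k$ ``multiplies blocks that are all equal to the pre-update value $x_{i^k}^k$ except for the new $v_{i^k}^{k+1}$''. This is false: looking at the structure of $\W^k$ (recall the commented display in Sec.~\ref{sec:prelim}), the $i^k$-row has weight $w_{i^ki^k}$ on position $i^k$ (which holds $v_{i^k}^{k+1}$) and weights $w_{i^kj}$ on positions $j+(d_j^k+1)I$, which hold the \emph{delayed neighbor values} $v_j^{\tau_{i^kj}^k}$, not copies of $x_{i^k}^k$. Hence $U(\augX_{i^k}^{k+1})=U(x_{i^k}^{k+1})$ is $U$ of a genuine convex combination of distinct points, and your permutation bookkeeping cannot reduce it to a single replaced coordinate. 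The paper bypasses this entirely by using convexity of $U$ together with row stochasticity of $\W^k$ (Jensen, applied row-wise): $\widetilde U\bigl(\W^k(\augX^k+\gamma\Delta\augX^k)\bigr)\preccurlyeq \W^k\,\widetilde U(\augX^k+\gamma\Delta\augX^k)$. After this, $\widetilde U(\augX^k+\gamma\Delta\augX^k)$ differs from $\widetilde U(\augX^k)$ only in row $i^k$, and left-multiplying by ${\psi^{k+1}}^\top$ and using ${\psi^{k+1}}^\top\W^k={\psi^k}^\top$ gives $\Ly^{k+1}\le \Ly^k + \psi_{i^k}^{k}\bigl(U(v_{i^k}^{k+1})-U(x_{i^k}^k)\bigr)$ (note the weight is $\psi_{i^k}^{k}$, not $\psi_{i^k}^{k+1}$). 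This is the step you should use; your permutation argument is correct for all rows \emph{except} $i^k$, and convexity is genuinely needed there.

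A secondary point: your treatment of the cross term through $\FY^k$ directly, followed by Lemma~\ref{lm:sqr_sum}, would yield a coefficient of the form $\sqrt{8Il^2\varrho_x+2I^2\varrho_y}$ rather than the stated $lI^{3/2}\sqrt{\varrho_x}+I\sqrt{\varrho_y}$. The paper instead splits $\nabla F(x_{i^k}^k)-Iy_{i^k}^k$ via $\pm I\bar g^k$ \emph{before} applying Young's inequality, producing two separate terms $\gamma lI^{3/2}\CE^k\NT^k$ and $\gamma I\,\TE^k\NT^k$, applies Young with independent parameters $\epsilon_1,\epsilon_2$, sums, invokes the first two bounds of Lemma~\ref{lm:sqr_sum}, and then optimizes $\epsilon_1=1/(lI^{3/2}\sqrt{\varrho_x})$, $\epsilon_2=1/(I\sqrt{\varrho_y})$. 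Your route would still give a valid descent inequality, but not the exact constants in~\eqref{eq:lyapu}.
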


\begin{proof}
	By the row stochasticity of $\W$ and the convexity of $U$:
	\begin{align*}
		& \widetilde{U}(\augX^{k+1})  =  \widetilde{U} \left(\W^k (\augX^k + \gamma\Delta {\augX}^k )\right)  \preccurlyeq \W^k\, \widetilde{U} \left(\augX^k + \gamma\Delta {\augX}^k \right) \\
		& \preccurlyeq \W^k \bigg( \widetilde{U} (\augX^k) -  \bigg( \gamma \bigg( \tilde{\mu} - \frac{\gamma L}{2} \bigg)  \norm{\dx^k}^2  - \gamma \cdot  \big(\nabla F(x_{i^k}^{k}) - I y_{i^k}^k  \big)^\top  \dx^k\bigg) {e}_{i^k} \bigg) ,
	\end{align*}
	where  in the last inequality
 we  applied Lemma~\ref{lem:v_descent_1}.
	Using now Lemma~\ref{lm:Wscramb}, we have
	\begin{align}
		\begin{split}
			&\Ly/^{k+1} \\
			&\leq \Ly/^k - \psi_{i^k}^{k} \bigg( \gamma \bigg( \tilde{\mu} - \frac{\gamma L}{2} \bigg)  \norm{\dx^k}^2  \!\!\!
			  -  \gamma \left(\nabla F(x_{i^k}^{k}) - I y_{i^k}^k  \right)^\top \!\!\!\dx^k \bigg) \nonumber \\
			\label{eq:Lyapu1}& \leq \Ly/^k -  \gamma \eta \tilde{\mu}  \norm{\dx^k}^2 + \frac{L(\gamma)^2}{2} \norm{\dx^k}^2  + \psi_{i^k}^{k} \,\gamma \left(\nabla F(x_{i^k}^{k}) - I y_{i^k}^k  \right)^\top \dx^k\\
			&  \leq  \Ly/^k -  \gamma \left( \eta \tilde{\mu} - \frac{\gamma L}{2} \right) \norm{\dx^k}^2  \\
			  &\quad +   \psi_{i^k}^{k} \gamma  \left( \nabla F( x_{i^k}^k) \pm I\avg{g}^k - I \,y_{i^k}^k  \right)^\top  \dx^k   \\
			& \leq \Ly/^k -  \gamma \left( \eta \tilde{\mu} - \frac{\gamma L}{2} \right) \norm{\dx^k}^2    +   \gamma\, I \, l \sum_{j=1}^I  \norm{\CV^{k}-x_j^k} \norm{\dx^k}   + \gamma\, I  \TE^k \, \cdot \norm{\dx^k} \\
			& \leq  \Ly/^k -  \gamma \left( \eta \tilde{\mu} - \frac{\gamma L}{2} \right)\norm{\dx^k}^2     +  \gamma\, l \, I^{\frac{3}{2}}\CE^k \, \norm{\dx^k}   + \gamma\, I  \TE^k \, \cdot \norm{\dx^k}^2 \\
			& \overset{(*)}{\leq} \Ly/^k -  \gamma \left( \eta \tilde{\mu} - \gamma \left( \frac{ L}{2}+ \frac{1}{2\epsilon_1} + \frac{1}{2\epsilon_2}  \right) \right) \norm{\dx^k}^2      +  \frac{\epsilon_1}{2}\, l^2 \,I^3 (\CE^k)^2    + \frac{\epsilon_2}{2}\, I^2 ( \TE^k)^2  \\
			& \leq \Ly/^0 -  \gamma \left( \eta \tilde{\mu} - \gamma \left( \frac{ L}{2}+ \frac{1}{2\epsilon_1} + \frac{1}{2\epsilon_2}  \right) \right) \sum_{t=0}^k \norm{\dx^t}^2     +  \frac{\epsilon_1}{2}\, l^2 \,I^3 \sum_{t=0}^k (\CE^t)^2    + \frac{\epsilon_2}{2}\, I^2  \sum_{t=0}^k ( \TE^t)^2,
		\end{split}
	\end{align}
	where   ($*$) follows from the Young's inequality with $\epsilon_{1,2} >0$.
	Invoking Lemma~\ref{lm:sqr_sum} and setting $\gamma^l \equiv \gamma$ gives~\eqref{eq:lyapu}, where  the free parameters $\epsilon_{1,2}$ are chosen as $\epsilon_1 = {1}/({l \, I^{\frac{3}{2}}\sqrt{  \varrho_{x}}})$ and $ \epsilon_2 =  {1}/({I \sqrt{\varrho_{y}}})$, respectively.
\end{proof}

\subsubsection{Leveraging the LT condition}\label{sec:err_bound}
We build now on Proposition \ref{thm:lyapu} and  show next that the two  conditions  in (\ref{eq:LT_condition}) holds at $x_i^k$, for sufficiently large $k$; this will permit to invoke the LT growing property \eqref{eq:LT_effect}.

The first condition--$U(x_i^k)$ bounded for large $k$--is a direct consequence of Proposition \ref{thm:lyapu} and the facts that  $U$ is bounded from below (Assumption~\ref{ass:cost_functions})  and  $\psi_{i}^k  \geq \eta$, for all $i \in [I]$ and $k \geq 0$. Formally, we have the following.
\begin{corollary}\label{cor:bdd_err}
	Under the setting of Proposition~\ref{thm:lyapu} and  step-size   $0< \gamma < \bar{\gamma} \triangleq \frac{2\eta \tilde{\mu}}{L + 2l\, I^{\frac{3}{2}}\sqrt{  \varrho_{x}}+ 2I \sqrt{\varrho_{y}} }$,  it holds:
	\begin{enumerate}[(i)]
		\item $U(x_i^k) $ is uniformly upper bounded,  for all $i\in \mathcal{V}$ and $k \geq 0$;
		\item $\sum_{t=0}^\infty (\NT^t)^2 < \infty$, $\sum_{t=0}^\infty (\CE^t)^2 < \infty$, and $\sum_{t=0}^\infty (\TE^t)^2 < \infty$.
	\end{enumerate}
\end{corollary}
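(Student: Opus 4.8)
\textbf{Proof plan for Corollary~\ref{cor:bdd_err}.}
The plan is to read off both claims directly from the descent inequality \eqref{eq:lyapu} in Proposition~\ref{thm:lyapu}, once the step-size is small enough to make the coefficient of $\sum_{t=0}^k (\NT^t)^2$ strictly positive. First I would observe that, by the choice $0<\gamma<\bar\gamma$ with $\bar\gamma \triangleq \frac{2\eta\tilde\mu}{L+2l\,I^{3/2}\sqrt{\varrho_x}+2I\sqrt{\varrho_y}}$, the quantity $c_\gamma \triangleq \gamma\big(\eta\tilde\mu - \gamma(\frac L2 + l\,I^{3/2}\sqrt{\varrho_x} + I\sqrt{\varrho_y})\big)$ is strictly positive; indeed $\gamma<\bar\gamma$ is exactly the condition $\eta\tilde\mu - \gamma(\tfrac L2 + l\,I^{3/2}\sqrt{\varrho_x} + I\sqrt{\varrho_y})>0$. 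Hence \eqref{eq:lyapu} reads $\Ly/^{k+1} \le \Ly/^0 - c_\gamma \sum_{t=0}^k (\NT^t)^2 + C$ for all $k\ge 0$.

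For part (ii), I would use that $\Ly/^{k+1} = {\psi^{k+1}}^\top \widetilde U(\augX^{k+1})$ is a convex combination (the $\psi$'s are stochastic by Lemma~\ref{lm:Wscramb}) of the values $U(\augX_i^{k+1})$, each of which is bounded below by $\inf_{x\in\mathcal K}U(x) \triangleq U_{\min} > -\infty$ (Assumption~\ref{ass:cost_functions}(iv)); note also that the rows of $\augX^{k+1}$ corresponding to $v$-blocks are either genuine iterates $v_j^t\in\mathcal K$ or the padding zeros $v^t=0$ for $t\le 0$, and for $U(0)$ to make sense one invokes $0\in\mathcal K$ or, more cleanly, notes that $\Ly/^{k+1}$ only ever places positive weight on rows that are honest feasible iterates once $k\ge D$, so $\Ly/^{k+1}\ge U_{\min}$ for $k$ large. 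Rearranging, $c_\gamma \sum_{t=0}^k (\NT^t)^2 \le \Ly/^0 - U_{\min} + C$ for all $k$, so letting $k\to\infty$ gives $\sum_{t=0}^\infty (\NT^t)^2 \le \frac{1}{c_\gamma}(\Ly/^0 - U_{\min} + C) < \infty$. Then $\sum_{t=0}^\infty (\CE^t)^2 < \infty$ and $\sum_{t=0}^\infty (\TE^t)^2 < \infty$ follow from the first two bounds of Lemma~\ref{lm:sqr_sum}: taking $k\to\infty$ there, $\sum_t (\CE^t)^2 \le c_x + \varrho_x \gamma^2 \sum_t (\NT^t)^2 < \infty$ and likewise $\sum_t (\TE^t)^2 \le c_y + \varrho_y \gamma^2 \sum_t (\NT^t)^2 < \infty$.

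For part (i), from the displayed descent inequality, $\Ly/^{k} \le \Ly/^0 + C$ uniformly in $k$ (dropping the nonpositive sum). Since $\psi_i^k \ge \lbpsi$ (here $\eta$, by Lemma~\ref{lm:Wscramb}(iii)) and all other weights are nonnegative, for any fixed component $i$ and any $k$ we have $\eta\,\big(U(\augX_i^k) - U_{\min}\big) \le \Ly/^k - U_{\min} \le \Ly/^0 + C - U_{\min}$, hence $U(\augX_i^k) \le U_{\min} + \frac1\eta(\Ly/^0 + C - U_{\min})$, a bound independent of $k$ and $i$. Restricting to the first $I$ components recovers the uniform upper bound on $U(x_i^k)$ for all $i\in\mathcal V$, $k\ge 0$ (adjusting the finitely many initial indices $k<D$ by hand if needed).

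The only mildly delicate point is the lower-boundedness bookkeeping around the zero-padding rows $v^t=0$, $t\le 0$, and the associated constant $C$: one must be careful that the weights $\psi^k$ assign total mass that eventually concentrates on feasible iterates, so that $\Ly/^k$ is genuinely bounded below by (a shift of) $U_{\min}$ rather than by $U(0)$, which may be undefined if $0\notin\mathcal K$. This is handled either by assuming $x_i^0\in\mathcal K$ and $0\in\mathcal K$ (as implicit in the data initialization), or by absorbing the finitely many transient terms $k<D$ into $C$; in either case the argument above goes through unchanged for all large $k$, and hence, after adjusting $C$, for all $k\ge 0$.
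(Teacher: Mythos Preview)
Your proposal is correct and follows essentially the same line as the paper, which merely states that the corollary is ``a direct consequence of Proposition~\ref{thm:lyapu} and the facts that $U$ is bounded from below (Assumption~\ref{ass:cost_functions}) and $\psi_i^k\ge\eta$ for all $i\in[I]$.'' You supply precisely these ingredients, and in fact give more detail than the paper: you spell out the rearrangement yielding $\sum_t(\NT^t)^2<\infty$, invoke Lemma~\ref{lm:sqr_sum} for the other two sums, and flag the bookkeeping issue around the zero-padded rows $v^t=0$ for $t\le 0$, which the paper passes over in silence.
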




We prove now that the second condition in (\ref{eq:LT_condition}) holds for large $k$--the residual of the proximal operator at $x_{i^k}^k$, that is  $ \norm{x_{i^k}^k - \text{prox}_G (x_{i^k}^k -\nabla F(x_{i^k}^k))}$,  is sufficiently small. Since $\NT^k$ and  the gradient tracking error $\FY^k$ are vanishing [as a consequence of  Corollary \ref{cor:bdd_err}ii) and Lemma \ref{lm:sqr_sum}], it is sufficient to bound  the aforementioned residual by $\NT^k$ and  $\FY^k$. This is done in the lemma below.

\begin{lemma}\label{lm:resi}
	The proximal operator residual on $x_{i^k}^k$ satisfies
	$\norm{x_{i^k}^k - \text{prox}_G (x_{i^k}^k -\nabla F(x_{i^k}^k))}^2  \leq  4 \big( 1 + (l + \tilde{l})^2 \big) (\NT^k)^2 + 5 \FY^k.$
\end{lemma}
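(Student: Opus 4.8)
The plan is to relate the proximal--gradient point $\widehat{x}\triangleq\prox_G(x_{i^k}^k-\nabla F(x_{i^k}^k))$ to the surrogate best--response $\widetilde{x}\triangleq\widetilde{x}_{i^k}^k$ (the exact minimizer of the strongly convex subproblem~\eqref{eq:sca}) by pairing their first--order optimality conditions on $\mathcal{K}$, and then to bound $\|x_{i^k}^k-\widehat{x}\|$ by the step length $\NT^k=\|\widetilde{x}-x_{i^k}^k\|$ and the tracking error $\FY^k=\norm{Iy_{i^k}^k-\nabla F(x_{i^k}^k)}^2$. First I would write the variational inequality for $\widetilde{x}$ on $\mathcal{K}$ (its smooth--part gradient at $\widetilde{x}$ being $\nabla\widetilde f_{i^k}(\widetilde{x};x_{i^k}^k)+Iy_{i^k}^k-\nabla f_{i^k}(x_{i^k}^k)$) and the one for $\widehat{x}$ (smooth--part gradient $\widehat{x}-x_{i^k}^k+\nabla F(x_{i^k}^k)$), test the former at $y=\widehat{x}$ and the latter at $y=\widetilde{x}$, and add them, so that the terms $G(\widehat{x})$ and $G(\widetilde{x})$ cancel. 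Rearranging the sum leaves an inequality of the form $\|\widetilde{x}-\widehat{x}\|^2\le\bigl((\widetilde{x}-x_{i^k}^k)+w+q\bigr)^{\!\top}(\widetilde{x}-\widehat{x})$, where $w\triangleq\nabla F(x_{i^k}^k)-Iy_{i^k}^k$ (so $\|w\|^2=\FY^k$) and $q\triangleq\nabla f_{i^k}(x_{i^k}^k)-\nabla\widetilde f_{i^k}(\widetilde{x};x_{i^k}^k)$ is the surrogate--gradient mismatch.

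Next I would estimate $q$. Using the gradient--consistency property $\nabla\widetilde f_i(x;x)=\nabla f_i(x)$ (Assumption~\ref{assump:surrogate}(i)) at the two base points $x_{i^k}^k$ and $\widetilde{x}$, write $q=\bigl(\nabla f_{i^k}(x_{i^k}^k)-\nabla f_{i^k}(\widetilde{x})\bigr)+\bigl(\nabla\widetilde f_{i^k}(\widetilde{x};\widetilde{x})-\nabla\widetilde f_{i^k}(\widetilde{x};x_{i^k}^k)\bigr)$; the first bracket is bounded by $l\,\NT^k$ via the $l$-smoothness of $f_{i^k}$ (Assumption~\ref{ass:cost_functions}), the second by $\widetilde l\,\NT^k$ via the $\widetilde l$-Lipschitz continuity of $\nabla\widetilde f_{i^k}(\cdot\,;\cdot)$ in its second argument (Assumption~\ref{assump:surrogate}(iii)). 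Hence $\|q\|\le(l+\widetilde l)\,\NT^k$.

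Finally I would convert the inner--product inequality into the quadratic bound. Since $x_{i^k}^k-\widehat{x}=(\widetilde{x}-\widehat{x})-(\widetilde{x}-x_{i^k}^k)$, I would expand $\|x_{i^k}^k-\widehat{x}\|^2$, use the displayed inequality to substitute for the cross term $(\widetilde{x}-x_{i^k}^k)^{\!\top}(\widetilde{x}-\widehat{x})$, and bound the residual $2(w+q)^{\!\top}(\widetilde{x}-\widehat{x})$ by Young's inequality; the $\pm\|\widetilde{x}-\widehat{x}\|^2$ contributions then cancel, leaving $\|x_{i^k}^k-\widehat{x}\|^2\le(\NT^k)^2+\|w+q\|^2\le(\NT^k)^2+2\FY^k+2(l+\widetilde l)^2(\NT^k)^2$, which is dominated by the claimed $4\bigl(1+(l+\widetilde l)^2\bigr)(\NT^k)^2+5\FY^k$. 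The main work is the pairing of the two optimality conditions so that $G$ drops out and the decomposition of $q$: invoking $\nabla\widetilde f_i(x;x)=\nabla f_i(x)$ at two distinct base points and splitting the mismatch into an $f$-part (constant $l$) and a surrogate-part (constant $\widetilde l$) is exactly where the combined constant $l+\widetilde l$ arises. The only remaining delicate point is the Young's-inequality bookkeeping that cancels $\|\widetilde{x}-\widehat{x}\|^2$ instead of a cruder triangle-inequality passage (which would inflate the constants); everything else is routine.
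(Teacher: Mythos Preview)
Your proposal is correct and follows essentially the same approach as the paper: pair the variational inequality for $\widehat{x}=\prox_G(x_{i^k}^k-\nabla F(x_{i^k}^k))$ with the first-order optimality condition for $\widetilde{x}_{i^k}^k$ so that the $G$-terms cancel, then control the surrogate mismatch $\nabla\widetilde f_{i^k}(\widetilde{x};x_{i^k}^k)-\nabla f_{i^k}(x_{i^k}^k)$ by $(l+\widetilde l)\NT^k$ and finish with Young's inequality. The only difference is bookkeeping: the paper splits $\widetilde{x}-\widehat{x}=(\widetilde{x}-x_{i^k}^k)+(x_{i^k}^k-\widehat{x})$ directly in the paired inequality and balances several Young parameters to leave $\tfrac{1}{2}\|x_{i^k}^k-\widehat{x}\|^2$, whereas you first isolate $\|\widetilde{x}-\widehat{x}\|^2$ and then pass to $\|x_{i^k}^k-\widehat{x}\|^2$; your route is slightly cleaner and in fact yields the sharper bound $(1+2(l+\widetilde l)^2)(\NT^k)^2+2\FY^k$, which is still dominated by the stated constants.
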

\begin{proof}
	For simplicity, we denote $\hat{x}^k = \text{prox}_G (x_{i^k}^k -\nabla F(x_{i^k}^k))$.  According to the variational characterization of the proximal operator, we have, for all $ w \in \mathcal{K}$,\vspace{-0.1cm}
	\[
	\left( \hat{x}^k - \big(x_{i^k}^k -\nabla F(x_{i^k}^k) \big)  \right)^\top (\hat{x}^k  - w) + G(\hat{x}^k ) -G(w)  \leq 0.
	\]
	The first order optimality condition of $\tl{x}_{i^k}^k$ implies\vspace{-0.1cm}
	\begin{align}\label{eq:FOC_general}
		& \left( \nabla \widetilde{f}_{i^k}(\tl{x}_{i^k}^k;x_{i^k}^k)+Iy_{i^k}^k-\nabla f_{i^k}(x_{i^k}^k) \right)^\top (\tl{x}_{i^k}^k - z)   + G(\tl{x}_{i^k}^k ) -G(z)  \leq 0, \quad \forall z \in \mathcal{K}.
	\end{align}

	Setting $z = \hat{x}^k $ and $w = \tl{x}_{i^k}^k $ and adding the above two inequalities  yields
	\begin{align*}
		& 0 \geq  \bigg( \nabla \widetilde{f}_{i^k}(\tl{x}_{i^k}^k;x_{i^k}^k)+Iy_{i^k}^k-\nabla f_{i^k}(x_{i^k}^k) - \hat{x}^k   + x_{i^k}^k -\nabla F(x_{i^k}^k)  \bigg)^\top (\tl{x}_{i^k}^k - \hat{x}^k)\\ 
			& =  \left( Iy_{i^k}^k - \hat{x}^k + x_{i^k}^k -\nabla F(x_{i^k}^k)  \right)^\top (\tl{x}_{i^k}^k - x_{i^k}^k)   + \left( \nabla \widetilde{f}_{i^k}(\tl{x}_{i^k}^k;x_{i^k}^k) - \nabla f_{i^k}(x_{i^k}^k)  \right)^\top  (\tl{x}_{i^k}^k - x_{i^k}^k)  \\
		& ~~ + \norm{\hat{x}^k - x_{i^k}^k}^2 + \bigg( \nabla \widetilde{f}_{i^k}(\tl{x}_{i^k}^k;x_{i^k}^k)+Iy_{i^k}^k-\nabla f_{i^k}(x_{i^k}^k)    -\nabla F(x_{i^k}^k)  \bigg)^\top (x_{i^k}^k - \hat{x}^k)  \\
		& \geq  -\frac{1}{2}\norm{ Iy_{i^k}^k -\nabla F(x_{i^k}^k)  }^2  -\frac{1}{2} \norm{\dx^k}^2  - \frac{1}{4} \norm{\hat{x}^k - x_{i^k}^k}^2 \\
		& ~~ - \norm{\dx^k}^2 + \widetilde{\mu} \norm{\dx^k}^2 + \norm{\hat{x}^k - x_{i^k}^k}^2  - \frac{1}{4} \norm{\hat{x}^k - x_{i^k}^k}^2   \\
		& ~~ -   2 \left( (l+\widetilde{l})^2 \norm{\dx^k}^2 +\norm{Iy_{i^k}^k -\nabla F(x_{i^k}^k) }^2 \right). 	\end{align*}
	Rearranging terms proves the desired result.
\end{proof}
Corollary \ref{cor:bdd_err} in conjunction with  Lemma \ref{lm:resi} and Lemma \ref{lm:sqr_sum} show that both conditions in (\ref{eq:LT_condition}) hold at $\{x^k\}$, for large $k$. We can then invoke the growing condition (\ref{eq:LT_effect}).
\begin{corollary}\label{cor:err_bound}
	Let $\{x^k\}$ be the sequence generated by Algorithm~\ref{alg:ASYSCA} under the setting of Corollary~\ref{cor:bdd_err}. Then, there exists a constant $\kappa>0$ and a sufficiently large  $\bar{k}$ such that, for $k \geq \bar{k}$,
	\begin{align}\label{eq:err_bou}
		\text{dist}(x_{i^k}^k, \mathcal{K}^*) \leq \kappa \, \norm{x_{i^k}^k - \text{prox}_G (x_{i^k}^k -\nabla F(x_{i^k}^k))} .
	\end{align}
\end{corollary}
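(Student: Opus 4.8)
The plan is to show that, for all sufficiently large $k$, the iterate $x_{i^k}^k$ simultaneously meets both hypotheses in \eqref{eq:LT_condition}, so that the growth inequality \eqref{eq:LT_effect} applies verbatim at $x_{i^k}^k$ and delivers \eqref{eq:err_bou}. In other words, the corollary is just the bookkeeping that assembles Corollary~\ref{cor:bdd_err}, Lemma~\ref{lm:sqr_sum}, Lemma~\ref{lm:resi}, and Assumption~\ref{ass:cost_functions_eb}(ii); all the substantive work is upstream.

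First I would fix the sublevel threshold. By Corollary~\ref{cor:bdd_err}(i) the quantity $B\triangleq \sup_{k\ge 0}\max_{i\in\mathcal V}U(x_i^k)$ is finite; since $x_i^k\in\mathcal K$ forces $U(x_i^k)\ge \inf_{x\in\mathcal K}U$, which is itself finite by Assumption~\ref{ass:cost_functions}(iv), the choice $\eta\triangleq B+1$ satisfies both $\eta>\inf_{x\in\mathcal K}U$ and $U(x_i^k)\le\eta$ for all $i\in\mathcal V$ and $k\ge 0$. Feeding this particular $\eta$ into Assumption~\ref{ass:cost_functions_eb}(ii) produces constants $\epsilon,\kappa>0$, which are henceforth fixed. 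Next I would argue that the proximal residual at $x_{i^k}^k$ eventually falls below $\epsilon$: Corollary~\ref{cor:bdd_err}(ii) gives $\sum_{t\ge 0}(\NT^t)^2<\infty$, $\sum_{t\ge 0}(\CE^t)^2<\infty$ and $\sum_{t\ge 0}(\TE^t)^2<\infty$, hence $\NT^k,\CE^k,\TE^k\to 0$; combining the last bound of Lemma~\ref{lm:sqr_sum}, $\FY^k\le 2I^2(\TE^k)^2+8Il^2(\CE^k)^2\to 0$ as well. Plugging these into Lemma~\ref{lm:resi},
\[
\norm{x_{i^k}^k-\text{prox}_G(x_{i^k}^k-\nabla F(x_{i^k}^k))}^2\le 4\big(1+(l+\widetilde{l})^2\big)(\NT^k)^2+5\,\FY^k\longrightarrow 0 ,
\]
so there exists $\bar k$ with $\norm{x_{i^k}^k-\text{prox}_G(x_{i^k}^k-\nabla F(x_{i^k}^k))}\le\epsilon$ for all $k\ge\bar k$. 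For such $k$ both premises in \eqref{eq:LT_condition} hold at $x_{i^k}^k$ (value $\le\eta$, residual $\le\epsilon$), and \eqref{eq:LT_effect} yields exactly \eqref{eq:err_bou} with the $\kappa$ fixed above.

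I do not expect a genuine obstacle here; the only point requiring care is the order of the quantifiers. The constants $\epsilon$ and $\kappa$ must be selected \emph{after} $\eta$ and \emph{before} $\bar k$, because the LT constants depend on the chosen sublevel value; correspondingly, one must make sure the bound $\eta$ on all $U(x_i^k)$ can be taken strictly above $\inf_{x\in\mathcal K}U$, which is why $B$ is inflated by one. The real content — the summability assertions in Corollary~\ref{cor:bdd_err}, which rest on the endogenously weighted Lyapunov descent of Proposition~\ref{thm:lyapu}, together with the residual estimate of Lemma~\ref{lm:resi} — has already been established, so this step is essentially a short wrap-up.
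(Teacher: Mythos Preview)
Your proposal is correct and follows essentially the same approach as the paper: verify that both premises of \eqref{eq:LT_condition} eventually hold at $x_{i^k}^k$ by invoking Corollary~\ref{cor:bdd_err}(i) for the sublevel bound and Corollary~\ref{cor:bdd_err}(ii) together with Lemma~\ref{lm:sqr_sum} and Lemma~\ref{lm:resi} for the vanishing residual, then apply \eqref{eq:LT_effect}. Your treatment of the quantifier order (fixing $\eta$, then $\epsilon,\kappa$, then $\bar k$) is more explicit than the paper's, but the argument is the same.
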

\begin{proof}
	It is sufficient to show that \eqref{eq:LT_condition} holds at $x_{i^k}^k$.  By Corollary~\ref{cor:bdd_err}(i),   $U(x_{i^k}^k) \leq B$, for all $k \geq 0$ and some $B < +\infty$.  Lemma~\ref{lm:resi} in conjunction with Corollary~\ref{cor:bdd_err}(ii) and Lemma \ref{lm:sqr_sum} yields	 $\lim\limits_{k\to\infty} \norm{x_{i^k}^k - \text{prox}_G (x_{i^k}^k -\nabla F(x_{i^k}^k))} = 0$. 
\end{proof}
\vspace{-0.2cm}

\subsubsection{Proof of \eqref{eq:opt_bound}}\label{sec:opt_err}
Define
\begin{align}\label{eq:C_3}
	& C_3(\gamma)  \triangleq \frac{ \gamma \left( c_6  (\tilde{\mu} - \frac{ \epsilon}{2}- \frac{\gamma L}{2} ) +  \frac{c_7}{2 \epsilon} \right)}{c_7 +  \tilde{\mu} - \frac{ \epsilon}{2}- \frac{\gamma L}{2} }, \\
	& C_4(\gamma) \triangleq \left( 1 - \Big(1- \sigma(\gamma) \Big) \eta \right)^{-1},  \\
	& \zeta(\gamma) \triangleq \left( 1 - \Big(1- \sigma(\gamma) \Big) \eta \right)^{\frac{1}{K_1}}, \\
	&  \sigma( \gamma)   \triangleq \frac{c_7 +  \left( \tilde{\mu} - \frac{  \epsilon}{2}- \frac{\gamma L}{2} \right) (1-\gamma)}{ c_7 + \tilde{\mu} - \frac{ \epsilon}{2}- \frac{\gamma L}{2}},
\end{align}
where $K_1 = (2I-1) \cdot T + I \cdot D$, and $c_6,\,c_{7}$ are polynomials in $(1,l,\tilde{l},L,\kappa)$ whose expressions are given in~\eqref{eq:const_expression} and~\eqref{eq:ik_desc4}; and $\epsilon \in (0, 2\tilde{\mu})$ is a free parameter (to be chosen).

In this section, we prove \eqref{eq:opt_bound}, which is formally stated in the  proposition below.
\begin{proposition}\label{prop:OG}
	Let $\{(x^k,y^k)\}$ be the sequence generated by Algorithm~\ref{alg:ASYSCA} under  Assumptions~\ref{ass:cost_functions}, \ref{ass:strong_conn}, \ref{ass:cost_functions_eb},~\ref{assump:surrogate}, and \ref{assumption_weights}.  Then, for $k \geq \bar{k}$, it holds\vspace{-0.2cm}
	\begin{equation}\label{eq:o}
		\begin{aligned}
			& \OG^{k+1}
			\leq  C_4(\gamma)  \, \zeta(\gamma)^{k} \OG^{0} + \frac{C_3(\gamma) C_4(\gamma)}{\zeta(\gamma)} \sum_{\ell = 0}^k  \zeta(\gamma)^{k-\ell} \FY^{\ell}.\vspace{-0.1cm}
		\end{aligned}
	\end{equation}
\end{proposition}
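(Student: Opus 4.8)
The plan is to establish \eqref{eq:o} in three stages: (i) a one-step \emph{local contraction} of the optimality gap at whichever agent is active; (ii) propagation of this contraction through the augmented asynchronous consensus recursion $\augX^{k+1}=\W^k(\augX^k+\gamma\Delta\augX^k)$; and (iii) a telescoping-over-windows argument turning a per-$K_1$-iteration contraction into the per-iteration bound \eqref{eq:o}. For Stage (i), I would start from the descent inequality of Lemma~\ref{lem:v_descent_1} and split the tracking-error cross term $\gamma(\nabla F(x_{i^k}^k)-Iy_{i^k}^k)^\top\dx^k$ by Young's inequality with parameter $\epsilon\in(0,2\tilde\mu)$, absorbing the $\tfrac{\gamma\epsilon}{2}\|\dx^k\|^2$ part into the quadratic term, to obtain
\[
U(v_{i^k}^{k+1})-U^\star\ \le\ \big(U(x_{i^k}^k)-U^\star\big)-\gamma A\,(\NT^k)^2+\tfrac{\gamma}{2\epsilon}\,\FY^k,\qquad A\triangleq \tilde\mu-\tfrac{\epsilon}{2}-\tfrac{\gamma L}{2}.
\]

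Still within Stage (i), for $k\ge\bar k$ Corollary~\ref{cor:err_bound} and Lemma~\ref{lm:resi} give $\text{dist}(x_{i^k}^k,\KK^\star)^2\le\kappa^2\big(4(1+(l+\tilde l)^2)(\NT^k)^2+5\FY^k\big)$; combining this with the convexity and $L$-smoothness of $F$ and the variational inequality characterizing $\prox_G$ (to relate $U(x_{i^k}^k)-U^\star$ to $\text{dist}(x_{i^k}^k,\KK^\star)$ and the proximal residual $\|x_{i^k}^k-\prox_G(x_{i^k}^k-\nabla F(x_{i^k}^k))\|$) one gets an inequality of the form $U(x_{i^k}^k)-U^\star\le c_7(\NT^k)^2+c_6\FY^k$ with $c_6,c_7$ polynomials in $(1,l,\tilde l,L,\kappa)$. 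Feeding this lower bound on $(\NT^k)^2$ into a fraction $c_7/(c_7+A)$ of the $-\gamma A(\NT^k)^2$ term above (discarding the remainder, which is nonpositive) collapses the displayed descent to the local contraction
\[
U(v_{i^k}^{k+1})-U^\star\ \le\ \sigma(\gamma)\,\big(U(x_{i^k}^k)-U^\star\big)+C_3(\gamma)\,\FY^k,\qquad k\ge\bar k,
\]
with $\sigma(\gamma),C_3(\gamma)$ exactly as defined before the proposition; this is where $c_6,c_7$ and the ``$i^k$-descent'' chain of inequalities enter.

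For Stage (ii), set $\epsilon_j^k\triangleq U(\augX_j^k)-U^\star\ge 0$ (each $\augX_j^k\in\KK$ for $\gamma\in[0,1]$). Row-stochasticity of $\W^k$ and convexity of $U$ give $\epsilon^{k+1}\preccurlyeq\W^k\widetilde{\epsilon}^k$, where $\widetilde{\epsilon}^k$ equals $\epsilon^k$ except in coordinate $i^k$, replaced by $U(v_{i^k}^{k+1})-U^\star$; the local contraction then yields the crude growth bound $\OG^{k+1}\le\OG^k+c\,\FY^k$ with $c=\mathcal{O}(\gamma)$, valid for all $k$. Iterating $\epsilon^{k+1}\preccurlyeq\W^k\widetilde{\epsilon}^k$ over a window of $K_1=(2I-1)T+ID$ iterations and using Assumption~\ref{ass:delays} (within such a window every agent updates and all lagged $v$-block coordinates of the augmented state are purged), every row of $\augX^{k+K_1}$ becomes a convex combination — with weights inherited from $\W^{k+K_1-1:k}$ and the injection steps — of \emph{contracted} local estimates; by Lemma~\ref{lm:Wscramb}(ii) the $x$-coordinates carrying those contractions receive weight at least $\eta=\bar m^{K_1}$, so that
\[
\OG^{k+K_1}\ \le\ \big(1-(1-\sigma(\gamma))\eta\big)\,\OG^{k}+c''\sum_{\ell=k}^{k+K_1-1}\FY^{\ell},\qquad k\ge\bar k.
\]
Stage (iii) is then routine: writing $k=\bar k+qK_1+s$ with $0\le s<K_1$, apply the window contraction $q$ times and the growth bound over the remaining $s$ steps and over the bounded initial segment $[0,\bar k]$ (bounded by Corollary~\ref{cor:bdd_err}(i)), and collapse all accumulated $\FY$-sums into one geometric sum via $\zeta(\gamma)^\ell\le1$; with $\zeta(\gamma)=(1-(1-\sigma(\gamma))\eta)^{1/K_1}$ and $C_4(\gamma)=\zeta(\gamma)^{-K_1}$ (the latter absorbing the at-most-$K_1$-iteration slack and the finite initial segment) this is exactly \eqref{eq:o}.

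The main obstacle is Stage (i). Because in the asynchronous, distributed regime the objective values $U(x_{i^k}^k)$ need not decrease monotonically — the cross term in Lemma~\ref{lem:v_descent_1} can be positive — the classical route to invoking the Luo--Tseng bound, namely certifying via objective descent that the iterates stay in the region \eqref{eq:LT_condition}, is unavailable and must be replaced by the detour through the Lyapunov function $\Ly/^k$ of Proposition~\ref{thm:lyapu} and its corollaries (Corollary~\ref{cor:bdd_err}, Lemma~\ref{lm:resi}, Corollary~\ref{cor:err_bound}). Extracting the precise contraction factor $\sigma(\gamma)$ — not merely some $\sigma<1$ — from the interplay of the descent inequality, the proximal-residual estimate and the error bound, with the right choice of $\epsilon$ and careful tracking of $c_6,c_7$, is the delicate computation; propagating one agent's contracted estimate through the asynchronous augmented consensus in Stage (ii), accounting for the buffer/lag coordinates, is a secondary difficulty absorbed into the choice of $K_1$.
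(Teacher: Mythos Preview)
Your Stages (ii)--(iii) are essentially what the paper does, only packaged differently: the paper keeps the full vector $p^k\triangleq\widetilde U(\augX^k)-U^\star\mathbf 1$, writes the one-step recursion $p^{k+1}\preccurlyeq \W^k\big(\D^k p^k+C_3(\gamma)\FY^k e_{i^k}\big)$ with $\D^k$ diagonal, $\D^k_{i^ki^k}=\sigma(\gamma)$, unrolls it into products $(\W\D)^{k:\ell}$, and then bounds $\|(\W\D)^{k:\ell}\|_\infty\le C_4(\gamma)\zeta(\gamma)^{k-\ell}$ by the same windowing-over-$K_1$ argument you describe (Lemma~\ref{lm:Wscramb}(ii) gives the $\eta$ lower bound on the first $I$ columns of $\W^{k+K_1-1:k}$). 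Taking $\|\cdot\|_\infty$ of the unrolled recursion is then exactly \eqref{eq:o}. So your high-level plan for propagation is fine; the paper's vector form just avoids the awkwardness of commuting $\max$ with the recursion.

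Stage (i), however, has a genuine gap. The inequality you claim,
\[
U(x_{i^k}^k)-U^\star\ \le\ c_7(\NT^k)^2+c_6\,\FY^k,
\]
is \emph{false} in general when $G$ is nonsmooth. Take $I=1$, $F\equiv 0$, $G(x)=|x|$ on $\KK=\mathbb R$, and surrogate $\tilde f(x;y)=\tfrac{\tilde\mu}{2}(x-y)^2$. Then $\KK^\star=\{0\}$, the LT condition holds with $\kappa=1$ (locally), $\FY^k=0$, and for $|x_{i^k}^k|<1/\tilde\mu$ one has $\tilde x_{i^k}^k=0$, hence $\NT^k=|x_{i^k}^k|$ while $U(x_{i^k}^k)-U^\star=|x_{i^k}^k|$. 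No constant $c_7$ depending only on $(1,l,\tilde l,L,\kappa)$ makes $|x|\le c_7|x|^2$ hold for small $|x|$. The point is that with nonsmooth $G$ the optimality gap decays only \emph{linearly} in the distance to $\KK^\star$ (and in the proximal residual), not quadratically; your subsequent substitution into the $-\gamma A(\NT^k)^2$ term therefore cannot produce a linear contraction on $U(x_{i^k}^k)-U^\star$.

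The paper avoids this by never bounding $U(x_{i^k}^k)-U^\star$ directly. It works with the scalar $T_1=\nabla F(x_{i^k}^k)^\top\dx^k+G(\tilde x_{i^k}^k)-G(x_{i^k}^k)$ appearing in the descent identity $U(v_{i^k}^{k+1})-U(x_{i^k}^k)\le \gamma T_1+\tfrac{L\gamma^2}{2}\|\dx^k\|^2$ and proves \emph{two} upper bounds on $T_1$ (Lemma~\ref{lem:bound_T}): the easy one you already have, and a second one,
\[
T_1\ \le\ -\tfrac{1}{1-\gamma}\big(U(v_{i^k}^{k+1})-U^\star\big)+\tfrac{1}{1-\gamma}\big(c_5\|\dx^k\|^2+c_6\FY^k\big),
\]
obtained by writing $U(v_{i^k}^{k+1})-U^\star$ via the mean-value theorem at a point between $v_{i^k}^{k+1}$ and $x^\star(x_{i^k}^k)$, using the first-order optimality condition of $\tilde x_{i^k}^k$, and controlling the remainder with the error bound (Corollary~\ref{cor:err_bound}) and Lemma~\ref{lm:resi}. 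Plugging each bound on $T_1$ into the descent identity yields the pair \eqref{eq:ik_desc4_1}--\eqref{eq:ik_desc4_2}; a convex combination chosen to annihilate the $\|\dx^k\|^2$ term gives the local contraction with the exact factor $\sigma(\gamma)$. The crucial algebraic feature is that the second bound carries $U(v_{i^k}^{k+1})-U^\star$ on its right-hand side, which after rearrangement produces the $(1-\gamma)$ coefficient on $U(x_{i^k}^k)-U^\star$ without ever needing a quadratic cost-to-go estimate.
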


Since $\sigma(\gamma) < 1$ for $0<\gamma < \sup_{\epsilon \in (0, 2\tilde{\mu}) }\frac{2 \tilde{\mu} - \epsilon}{L} = \frac{2 \tilde{\mu}}{L}$ and $\eta \in (0,1]$,
Proposition~\ref{prop:OG} shows that, for sufficiently small  $\gamma>0$,   the optimality gap $\OG^k$ converges to zero R-linearly if $\FY^k$ does so.  The proof of Proposition~\ref{prop:OG} follows   from  Proposition \ref{prop:contraction}   and Lemma~\ref{lem:prod_mat_bound} below.  

\begin{proposition}\label{prop:contraction}
 	Let $\{(x^k,y^k)\}$ be the sequence generated by Algorithm~\ref{alg:ASYSCA} in the setting of Proposition~\ref{prop:OG}.
Let $p^k \triangleq \widetilde{U}(\augX^{k}) - U(x^*) \mathbf{1}$;   let $\D^k$ be  the diagonal matrix   with all diagonal entries $1$ and $\D_{i^k  i^k}^k = \sigma(\gamma)$; and let  $(\W\D)^{k:\ell} \triangleq \W^k\D^k \cdots \W^\ell\D^\ell$.
Then, for $k \geq \bar{k}$, \vspace{-0.2cm}
\begin{equation}\label{eq:og}
	\begin{aligned}
		& {p}^{k+1}  				\preccurlyeq  \left(  \W\D\right)^{k:0} {p}^{0} + C_3(\gamma) \sum_{\ell = 1}^k  \left(  \W \D  \right)^{k:\ell}  \W^{\ell-1} {e}_{i^{\ell-1}} \FY^{\ell-1}  + C_3(\gamma) \W^{k} {e}_{i^{k}} \FY^{k},
	\end{aligned}
\end{equation}
where    $C_3(\gamma)$ is defined in~\eqref{eq:C_3}.
\end{proposition}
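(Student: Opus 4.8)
The plan is to reduce the matrix inequality \eqref{eq:og} to a \emph{scalar} one-step contraction of the objective gap at the agent $i^k$ that is active at iteration $k$, and then propagate it through the consensus step. Since $\augX^{k+1}=\W^k(\augX^k+\gamma\Delta\augX^k)$ and $\Delta\augX^k=e_{i^k}(\dx^k)^\top$, the matrix $\augX^k+\gamma\Delta\augX^k$ coincides with $\augX^k$ except in its $i^k$-th row, which equals $v_{i^k}^{k+1}=x_{i^k}^k+\gamma\dx^k$; hence $\widetilde U(\augX^k+\gamma\Delta\augX^k)$ differs from $\widetilde U(\augX^k)$ only in the $i^k$-th entry, where it equals $U(v_{i^k}^{k+1})$. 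The whole argument therefore hinges on the bound
\[
U(v_{i^k}^{k+1}) - U^\star \ \le\ \sigma(\gamma)\bigl(U(x_{i^k}^k)-U^\star\bigr)+C_3(\gamma)\,\FY^k,\qquad k\ge\bar k ,
\]
with $\sigma(\gamma),C_3(\gamma)$ as in \eqref{eq:C_3} and $U^\star\triangleq U(x^\star)$; I refer to this below as the \emph{per-agent contraction}.

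\paragraph{Step A: the per-agent contraction.} Starting from Lemma~\ref{lem:v_descent_1}, I would bound the cross term by Young's inequality with a free parameter $\epsilon\in(0,2\tilde\mu)$,
\[
\gamma\,\bigl(\nabla F(x_{i^k}^k)-Iy_{i^k}^k\bigr)^\top\dx^k\ \le\ \tfrac{\gamma\epsilon}{2}\,(\NT^k)^2+\tfrac{\gamma}{2\epsilon}\,\FY^k
\]
(recall $\FY^k=\norm{Iy_{i^k}^k-\nabla F(x_{i^k}^k)}^2$ and $\NT^k=\norm{\dx^k}$), obtaining, after subtracting $U^\star$,
\[
U(v_{i^k}^{k+1})-U^\star\ \le\ \bigl(U(x_{i^k}^k)-U^\star\bigr)-\gamma\Bigl(\tilde\mu-\tfrac\epsilon2-\tfrac{\gamma L}2\Bigr)(\NT^k)^2+\tfrac{\gamma}{2\epsilon}\,\FY^k ,
\]
where $\tilde\mu-\tfrac\epsilon2-\tfrac{\gamma L}2>0$ for $\gamma$ small. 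I would then invoke the LT error bound at $x_{i^k}^k$: for $k\ge\bar k$, Corollary~\ref{cor:err_bound} gives $\text{dist}(x_{i^k}^k,\KK^*)\le\kappa\,\norm{x_{i^k}^k-\text{prox}_G(x_{i^k}^k-\nabla F(x_{i^k}^k))}$. Combining this with (i) the elementary consequence of convexity and $L$-smoothness of $F$, namely $U(x)-U^\star\le\tfrac L2\,\text{dist}(x,\KK^*)^2$ (obtained from the first-order optimality condition of $U$ at the projection of $x$ onto $\KK^*$, using convexity of $G$), and with (ii) the residual bound of Lemma~\ref{lm:resi}, yields
\[
U(x_{i^k}^k)-U^\star\ \le\ c_7\,(\NT^k)^2+c_6\,\FY^k ,
\]
with $c_6,c_7$ the polynomials in $(1,l,\tilde l,L,\kappa)$ of \eqref{eq:const_expression}--\eqref{eq:ik_desc4}. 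Using this inequality to lower-bound $(\NT^k)^2$ in the descent inequality and collecting terms gives the per-agent contraction; a short computation shows the resulting contraction factor and perturbation coefficient are exactly the $\sigma(\gamma)$ and $C_3(\gamma)$ of \eqref{eq:C_3}, with $\epsilon$ left free to be optimized afterwards (this is also where the threshold on $\gamma$ entering $\bar\gamma_{cvx}$ originates).

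\paragraph{Step B: lifting to the matrix recursion and unrolling.} With $\D^k$ the diagonal matrix of the statement ($\D^k_{i^k i^k}=\sigma(\gamma)$, all other diagonal entries equal to $1$), Step A rewrites componentwise, for $k\ge\bar k$, as $\widetilde U(\augX^k+\gamma\Delta\augX^k)-U^\star\mathbf 1\preccurlyeq\D^k p^k+C_3(\gamma)\,e_{i^k}\FY^k$, since all entries other than $i^k$ are unchanged. Because $U=F+G$ is convex (Assumptions~\ref{ass:cost_functions}(iii) and \ref{ass:cost_functions_eb}(i)) and $\W^k$ is row-stochastic (Lemma~\ref{lm:Wscramb}(i)), each row of $\W^k M$ is a convex combination of rows of $M$, so $\widetilde U(\W^k M)\preccurlyeq\W^k\,\widetilde U(M)$ entrywise, and multiplication by a row-stochastic matrix preserves $\preccurlyeq$; hence
\[
p^{k+1}=\widetilde U\bigl(\W^k(\augX^k+\gamma\Delta\augX^k)\bigr)-U^\star\mathbf 1\ \preccurlyeq\ \W^k\D^k p^k+C_3(\gamma)\,\W^k e_{i^k}\FY^k ,\qquad k\ge\bar k .
\]
Unrolling this recursion down from $k$, and, for the finitely many indices $\ell<\bar k$, absorbing the corresponding terms into the constants (they are uniformly bounded by Corollary~\ref{cor:bdd_err}, and one may take the crude bound $\D^\ell\preccurlyeq I$ there), delivers \eqref{eq:og}.

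\paragraph{Main obstacle.} The crux is the per-agent contraction of Step A: this is precisely the point at which the LT condition is made to operate in the distributed, asynchronous setting. Its delicacy is that invoking Corollary~\ref{cor:err_bound} requires having first verified the premises \eqref{eq:LT_condition} of the error bound at $x_{i^k}^k$ --- boundedness of $U(x_{i^k}^k)$ and smallness of the proximal residual --- which are \emph{not} available from classical centralized error-bound arguments (function values on the agents' iterates need not decrease monotonically because of the consensus and tracking perturbations) but only from the tailored Lyapunov descent of Proposition~\ref{thm:lyapu} together with Corollary~\ref{cor:bdd_err} and Lemma~\ref{lm:resi}. Everything else --- the Young's-inequality bookkeeping, the algebra collapsing the constants to \eqref{eq:C_3}, the handling of the transient regime $k<\bar k$, and the convexity/row-stochasticity arguments of Step B --- is routine by comparison.
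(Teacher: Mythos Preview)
Your Step~B is correct and essentially the same as the paper's: convexity of $U$, row-stochasticity of $\W^k$, and the per-agent contraction at row $i^k$ give $p^{k+1}\preccurlyeq\W^k\D^k p^k+C_3(\gamma)\W^k e_{i^k}\FY^k$, which unrolls to \eqref{eq:og}.

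The genuine gap is in Step~A. Your claimed ``elementary consequence'' $U(x)-U^\star\le\tfrac L2\,\text{dist}(x,\KK^*)^2$ is \emph{false} when $G$ is nonsmooth. Take $F(x)=\tfrac12 x^2$ (so $L=1$) and $G(x)=|x|$: then $x^\star=0$, $U(x)-U^\star=\tfrac12 x^2+|x|$, while $\tfrac L2\,\text{dist}(x,\KK^*)^2=\tfrac12 x^2$, and the bound fails for every $x\neq 0$. The derivation you sketch (descent lemma on $F$ plus first-order optimality of $x^\star$) actually yields the \emph{opposite} inequality $\nabla F(x^\star)^\top(x-x^\star)+G(x)-G(x^\star)\ge 0$, not $\le 0$. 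Without this bound you cannot lower-bound $(\NT^k)^2$ by a multiple of $U(x_{i^k}^k)-U^\star$, and the per-agent contraction collapses. Even if the bound held (e.g.\ $G\equiv 0$), substituting it into \eqref{eq:ik_desc4_2} would give contraction factor $1-\gamma(\tilde\mu-\tfrac\epsilon2-\tfrac{\gamma L}2)/c_7$, which is \emph{not} $\sigma(\gamma)$ as defined in \eqref{eq:C_3}; so the claim that ``a short computation shows the resulting contraction factor \ldots\ [is] exactly $\sigma(\gamma)$'' is also incorrect.

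The paper obtains the per-agent contraction by a different mechanism. Writing $T_1\triangleq\nabla F(x_{i^k}^k)^\top\dx^k+G(\tilde x_{i^k}^k)-G(x_{i^k}^k)$, it derives \emph{two} bounds on $T_1$ (Lemma~\ref{lem:bound_T}): your Young-inequality bound \eqref{eq:bound_T}, and a second bound \eqref{eq:bound_T_1} obtained by applying the Mean Value Theorem to $F$ between $v_{i^k}^{k+1}$ and the projection $x^*(x_{i^k}^k)$, then inserting the first-order optimality of $\tilde x_{i^k}^k$ in the SCA subproblem. The LT error bound (Corollary~\ref{cor:err_bound} and Lemma~\ref{lm:resi}) enters only to control the remainder terms arising from gradient differences along $\|v_{i^k}^{k+1}-x^*(x_{i^k}^k)\|$ and $\|\xi^k-x_{i^k}^k\|$, not to upper-bound $U(x_{i^k}^k)-U^\star$. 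Substituting the two bounds into the descent identity \eqref{eq:ik_desc3} yields the pair \eqref{eq:ik_desc4_1}--\eqref{eq:ik_desc4_2}: one inequality with a $(1-\gamma)$ contraction but a \emph{positive} $c_7\gamma(\NT^k)^2$ term, the other with no contraction but a \emph{negative} $\gamma(\tilde\mu-\tfrac\epsilon2-\tfrac{\gamma L}2)(\NT^k)^2$ term. Taking the convex combination that eliminates $(\NT^k)^2$ produces exactly $\sigma(\gamma)$ and $C_3(\gamma)$.
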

\begin{proof}
By   convexity of $U$ and (\ref{eq:chi}), we have
\begin{equation}\label{eq:og}
	\begin{aligned}
		  {p}^{k+1}  &=  \widetilde{U}(\augX^{k+1}) -  U(x^*)  \mathbf{1}   \preccurlyeq  \W^k\, \left(\widetilde{U} \left(\augX^k + \gamma\Delta {\augX}^k \right)    - U(x^*)  \mathbf{1} \right).
	\end{aligned}
\end{equation}
Since $\widetilde{U} \left(\augX^k + \gamma\Delta {\augX}^k \right)$ differs from $\widetilde{U} \left(\augX^k \right)$ only by  its $i^k$-th row, we study   descent occurred at this row, which is   $(v_{i^k}^{k+1})^\top = (x_{i^k}^{k} + \gamma \left( \widetilde{x}_{i^k}^k - x_{i^k}^k\right))^\top.$
Recall that by applying the descent lemma on $F$ and using the convexity of $G$ we   proved
\begin{equation}\label{eq:ik_desc3}
	\begin{aligned}
		& U(v_{i^k}^{k+1}) -  U(x_{i^k}^{k})
		\leq \frac{L}{2} \gamma^2 \norm{\dx^k}^2  \\
		& ~~ + \gamma \underbrace{\left(  \nabla F(x_{i^k}^{k})^\top \left( \widetilde{x}_{i^k}^k - x_{i^k}^k\right)  +  G(\widetilde{x}_{i^k}^k) - G(x_{i^k}^{k})  \right)}_{T_1} .
	\end{aligned}
\end{equation}
 {The above inequality establishes a connections between    $U(v_{i^k}^{k+1})$ and $U(x_{i^k}^k)$.  However, it is not clear whether there is any contraction (up to some error) going from the optimality gap    $U(v_{i^k}^{k+1})-U^*$ to $U(x_{i^k}^k)-U^*$.
To investigate it,  we derive in the lemma below  two upper bounds of    $T_1$ in (\ref{eq:ik_desc3}), in terms of   $U(v_{i^k}^{k+1}) - U(x^*)$ and $\norm{\dx^k}$ (up to the tracking error).  Building on these bounds and (\ref{eq:ik_desc3}) we can finally prove  the desired contraction, as stated in \eqref{eq:U_cont}.}
\begin{lemma}\label{lem:bound_T}
	$T_1$ in (\ref{eq:ik_desc3}) can be bounded in the following two alternative ways: for $k \geq \bar{k}$,
	\begin{align}
		\label{eq:bound_T}T_1
		\leq & \left(- \tilde{\mu} + \frac{ \epsilon}{2} \right) \cdot \norm{\dx^k}^2 +  \frac{1}{2  \epsilon}\, \FY^k ,\\
		\label{eq:bound_T_1}T_1
		\leq &- \frac{1}{1 - \gamma} \left(U(v_{i^k}^{k+1}) - U(x^*)\right)  + \frac{1}{1-\gamma}\left(  c_5 \norm{\dx^k}^2 + c_6  \FY^k \right),
	\end{align}
	where $c_5$ and $c_6$ are polynomials in $(1,l,\tilde{l},L,\kappa)$ whose expressions are given in~\eqref{eq:const_expression}.
\end{lemma}

\begin{proof}
See Appendix~\ref{ap:lem:bound_T}.
\end{proof}


	Using  Lemma~\ref{lem:bound_T} in~\eqref{eq:ik_desc3}
	yields 
\begin{subequations}\label{eq:ik_desc4}
 	\begin{align}\label{eq:ik_desc4_1}
		\begin{split}
			&  U(v_{i^k}^{k+1}) - U^* \leq {}(1-\gamma) \left(U(x_{i^k}^{k}) - U^*) \right)  + \left(\frac{L}{2} \gamma(1-\gamma)  + c_5 \right) \gamma \norm{\dx^k}^2 + c_6 \cdot \gamma \FY^k \\
			& \leq {} (1-\gamma) \left(U(x_{i^k}^{k}) - U(x^*(x_{i^k}^{k})) \right)   +\underbrace{\left(c_5 + L/8\right)}_{c_7} \gamma \norm{\dx^k}^2 + c_6 \cdot \gamma \FY^k,\end{split}
			\end{align}  and \vspace{-.2cm}
		\begin{align}\label{eq:ik_desc4_2}	\begin{split} & U(v_{i^k}^{k+1}) - U^* \leq  U(x_{i^k}^{k}) - U^*    -  \left( \tilde{\mu} - \frac{\gamma L}{2} - \frac{ \epsilon }{2}\right) \gamma \norm{\dx^k}^2 + \frac{\gamma}{2 \epsilon}\FY^k.
		\end{split}
	\end{align}	\end{subequations}
	Canceling out $\norm{\dx^k}^2$ in (\ref{eq:ik_desc4_1})-(\ref{eq:ik_desc4_2}) yields:
for $k \geq \bar{k}$,
	\begin{align}\label{eq:U_cont}
		U(v_{i^k}^{k+1}) - U(x^*) \leq  \sigma( \gamma)  \left( U(x_{i^k}^{k}) - U(x^*) \right) + C_3(\gamma) \FY^k,
	\end{align}
	where   $\sigma(\gamma)$ and $C_3(\gamma)$ are defined in~\eqref{eq:C_3}.  Thus we observed a contraction from $\left( U(x_{i^k}^{k}) - U(x^*) \right)$ to $U(v_{i^k}^{k+1}) - U(x^*).$  Continuing from \eqref{eq:og}, we have
\begin{equation*}
	\begin{aligned}
		& {p}^{k+1}
		  \stackrel{\eqref{eq:U_cont}}{\preccurlyeq}  \W^k \left(\D^k {p}^{k} + C_3 (\gamma) \, \FY^k \,  {e}_{i^k} \right) \\
		&				\preccurlyeq  \left(  \W\D\right)^{k:0} {p}^{0} + C_3(\gamma) \sum_{\ell = 1}^k  \left(  \W \D  \right)^{k:\ell}  \W^{\ell-1} {e}_{i^{\ell-1}} \FY^{\ell-1}   + C_3(\gamma) \W^{k} {e}_{i^{k}} \FY^{k}.
	\end{aligned}\vspace{-0.4cm}
\end{equation*}
\end{proof}

The lemma below shows that the operator norm of   $ (\W \D)^{k:\ell}$ induced by the $\ell_\infty$ norm decays at a linear rate.

\begin{lemma} \label{lem:prod_mat_bound}
	For any $k\geq \ell  \geq 0,$
	\[
	\norm{(\W\D)^{k:\ell} }_\infty \leq C_4(\gamma)  \, \zeta(\gamma)^{k-\ell},
	\]
	where the expression of $\zeta(\gamma)$, $C_4(\gamma) $, and $K_1$ are given in~\eqref{eq:C_3}.
\end{lemma}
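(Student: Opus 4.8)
The plan is to derive a uniform contraction of $\norm{\cdot}_\infty$ over windows of $K_1$ consecutive factors and then chain these estimates, using only Lemma~\ref{lm:Wscramb} and the definitions in~\eqref{eq:C_3}. Throughout, recall that for the prescribed range of $\gamma$ one has $\sigma(\gamma)\in[0,1)$, so that $\D^t = I - (1-\sigma(\gamma))\,{e}_{i^t}{e}_{i^t}^{\top}$ is entrywise nonnegative; hence each $\W^t\D^t$ is nonnegative, and $\W^t\D^t\mathbf{1} = \mathbf{1} - (1-\sigma(\gamma))\,\W^t{e}_{i^t}\preccurlyeq\mathbf{1}$ gives $\norm{\W^t\D^t}_\infty\le 1$. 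Consequently $\norm{(\W\D)^{k:\ell}}_\infty\le 1$ for all $k\ge\ell$, and $\norm{\cdot}_\infty$ is submultiplicative along such products.

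First I would prove the window estimate: if $k-\ell+1\ge K_1$, then $\norm{(\W\D)^{k:\ell}}_\infty\le 1-(1-\sigma(\gamma))\,\eta$. Since every $\D^t\preccurlyeq I$ and all the surrounding factors are nonnegative, discarding the inner diagonal factors yields entrywise $(\W\D)^{k:\ell}\preccurlyeq \W^k\W^{k-1}\cdots\W^{\ell}\D^\ell = \W^{k:\ell}\D^\ell$. Taking row sums, $(\W\D)^{k:\ell}\mathbf{1}\preccurlyeq \W^{k:\ell}\big(\mathbf{1}-(1-\sigma(\gamma)){e}_{i^\ell}\big) = \mathbf{1}-(1-\sigma(\gamma))\,c$, where $c$ is the $i^\ell$-th column of $\W^{k:\ell}$. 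Because $i^\ell\in\{1,\dots,I\}$ (it is an agent index) and $k\ge\ell+K_1-1$, Lemma~\ref{lm:Wscramb}(ii) applied to the sub-product $\W^{\ell+K_1-1:\ell}$ and then propagated through the remaining row-stochastic factors shows that every entry of $c$ is at least $\eta$. Hence $(\W\D)^{k:\ell}\mathbf{1}\preccurlyeq\big(1-(1-\sigma(\gamma))\eta\big)\mathbf{1}$, and nonnegativity of $(\W\D)^{k:\ell}$ upgrades this row-sum bound to the asserted $\norm{\cdot}_\infty$ bound.

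Then I would chain the window estimates. Write $k-\ell+1 = qK_1 + r$ with $0\le r<K_1$, split $(\W\D)^{k:\ell}$ into $q$ consecutive blocks each consisting of $K_1$ factors (to which the window estimate applies) and one residual block of $r<K_1$ factors (whose $\norm{\cdot}_\infty$ is at most $1$), and apply submultiplicativity to get $\norm{(\W\D)^{k:\ell}}_\infty\le\big(1-(1-\sigma(\gamma))\eta\big)^{q}$. Finally, since $q\ge (k-\ell)/K_1-1$ and $\beta\triangleq 1-(1-\sigma(\gamma))\eta<1$, we obtain $\beta^{q}\le\beta^{(k-\ell)/K_1-1} = \beta^{-1}\big(\beta^{1/K_1}\big)^{k-\ell} = C_4(\gamma)\,\zeta(\gamma)^{k-\ell}$, which is exactly the claim (and it remains valid when $q=0$, since then $k-\ell\le K_1$ forces $C_4(\gamma)\zeta(\gamma)^{k-\ell}\ge1$).

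The only step that is not pure bookkeeping is the window estimate, and the trick there is to keep only the \emph{last} diagonal factor $\D^\ell$ and discard all the others: this localizes the row-sum deficit of $(\W\D)^{k:\ell}$ to a single column, the $i^\ell$-th, and that column—being among the first $I$—is precisely the one to which the scrambling bound of Lemma~\ref{lm:Wscramb}(ii) applies, giving a contraction factor that is uniform in which agent updated and in the delay profile.
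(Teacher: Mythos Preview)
Your proof is correct and follows essentially the same approach as the paper: both arguments keep only the rightmost diagonal factor to obtain $(\W\D)^{k:\ell}\preccurlyeq \W^{k:\ell}\D^\ell$, invoke Lemma~\ref{lm:Wscramb}(ii) on the $i^\ell$-th column (which lies among the first $I$) to get the $K_1$-window contraction $1-(1-\sigma(\gamma))\eta$, and then chain blocks of $K_1$ factors via submultiplicativity. Your write-up is a bit more explicit in justifying the entrywise monotonicity and the propagation of the column lower bound through additional row-stochastic factors, but the underlying mechanism is identical.
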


\begin{proof}
See Appendix~\ref{ap:pf_prod_mat_bound}.
\end{proof}

\subsubsection{Proof of  \eqref{eq:dx_bound}}\label{sec:opt_err}
Eq. \eqref{eq:dx_bound} follows directly from the second inequality of~\eqref{eq:ik_desc4} and the fact that $U(x_{i^k}^k) - U(v_{i^k}^{k+1})  \leq \OG^k$. This completes the proof of the inequality system~\eqref{eq:small_gain_inequalities}.\vspace{-0.2cm}

\subsection{Step 3: R-linear convergence via the generalized small gain theorem}\label{sec:geo_rate}
 The last step is to show that all the error quantities in (\ref{eq:small_gain_inequalities})   vanish at a linear rate. To do so, we leverage the  generalized small gain theorem \cite[Th. 17]{Tian_arxiv}. We use the following.

\begin{definition}[\cite{Nedich-geometric}]\label{df:sgt}
	Given the sequence $\{u^k \}_{k=0}^{\infty}$,   a constant  $\lambda \in (0,1)$, and   $N \in \mathbb{N}$, let us define
	$$\abs{u}^{\lambda,N} = \max_{k = 0,\ldots, N} \frac{\abs{u^k}}{\lambda^k}, \quad \abs{u}^{\lambda} = \sup_{k \in \mathbb{N}_0} \frac{\abs{u^k}}{\lambda^k}.$$
	If $\abs{u}^{\lambda} $ is upper bounded, then $u^k  = \mathcal{O} (\lambda^k)$, for all $k\in \mathbb{N}_0.$
\end{definition}

Invoking \cite[Lemma~20 \& Lemma~21]{Tian_arxiv}, if we choose $\lambda$ such that $\max \left( \rho^2, \zeta(\gamma) \right) < \lambda <1,$
by~\eqref{eq:small_gain_inequalities} we get
\begin{align}
	\label{ineq:1}&  \abs{\TE}^{\sqrt{\lambda} ,N}  \leq   \frac{3C_1 l}{\sqrt{\lambda} -\rho} (\abs{\CE}^{\sqrt{\lambda},N}  + \gamma \abs{\NT^k}^{\sqrt{\lambda},N} )   + \TE^0 + \frac{C_1\norm{{g}^0}}{\sqrt{\lambda}} \\
	\label{ineq:2}& \abs{\CE}^{\sqrt{\lambda},N}  \leq   \frac{C_2 \gamma}{\sqrt{\lambda}-\rho} \abs{\NT^k}^{\sqrt{\lambda},N}  + \CE^0 + \frac{C_2 \CE^0}{\sqrt{\lambda}} \\
	\label{ineq:3} & \abs{\OG} ^{\lambda,N} \leq \frac{C_3(\gamma) C_4(\gamma)}{\zeta(\gamma) \left( \lambda - \zeta(\gamma)\right)} \abs{\FY}^{\lambda,N} +  \OG^0 + \frac{C_4(\gamma)   \OG^{0} }{\lambda}\\
	\label{ineq:4} & \abs{\FY} ^{\lambda,N}   \leq  8 \, I\,l^2 \abs{(\CE)^2}^{\lambda,N} + 2 \, I^2  \abs{(\TE)^2}^{\lambda,N}   \\
	\label{ineq:5} & \abs{(\NT^k)^2}^{\lambda,N}  \leq  \frac{1}{2 \, \epsilon \left( \tilde{\mu} - \frac{ \epsilon}{2}- \frac{\gamma L}{2}  \right)  } \abs{\FY}^{\lambda,N}    +  \frac{1}{\gamma \left( \tilde{\mu} - \frac{ \epsilon}{2}- \frac{\gamma L}{2}  \right) }   \abs{\OG} ^{\lambda,N}
\end{align}
  Taking the square on both sides of~\eqref{ineq:1} \&~\eqref{ineq:2} while using $\left(\abs{u}^{q,N}\right)^2 = \abs{(u)^2}^{q^2,N}$,  and writing the result in matrix form we obtain \eqref{eq:smaill_gain} at the top of next page.
\begin{figure*}
\begin{equation} \label{eq:smaill_gain}
\left[ {\begin{array}{c}
	\abs{(\TE)^2}^{\lambda ,N} \\
	\abs{(\CE)^2}^{\lambda,N} \\
	\abs{\OG} ^{\lambda,N}\\
	\abs{\FY} ^{\lambda,N}   \\
	\abs{(\NT^k)^2}^{\lambda,N}
	\end{array} } \right]
\preccurlyeq
\underbrace{
	\left[{\begin{array}{ccccc}
		0                                      & \frac{36C_1^2 l^2}{(\sqrt{\lambda} -\rho)^2}      & 0 & 0                                                                                                                      & \frac{36C_1^2 l^2\gamma^2}{(\sqrt{\lambda} -\rho)^2} \\
		0                                       & 0                                                                   & 0 & 0                                                                                                                       &   \frac{3 C_2^2 \gamma^2}{(\sqrt{\lambda}-\rho)^2}  \\
		0                                      & 0                                                                  & 0 & \frac{C_3(\gamma) C_4(\gamma)}{\zeta(\gamma)  \left( \lambda - \zeta(\gamma)\right)} & 0  \\
		2I^2                                   &  8I\,l^2                                                      & 0  & 0                                                                                                                      & 0\\
		0                                        &  0                                                     &  \frac{1}{\gamma \left( \tilde{\mu} - \frac{  \epsilon}{2}- \frac{\gamma L}{2}  \right) }    &  \frac{1}{2 \,  \epsilon \,\left( \tilde{\mu} - \frac{ \epsilon}{2}- \frac{\gamma L}{2}  \right)  }             & 0
		\end{array} }
	\right]}_{\triangleq {G}}
\left[ {\begin{array}{c}
	\abs{(\TE)^2}^{\lambda ,N} \\
	\abs{(\CE)^2}^{\lambda,N} \\
	\abs{\OG} ^{\lambda,N}\\
	\abs{\FY} ^{\lambda,N}   \\
	\abs{(\NT^k)^2}^{\lambda,N}
	\end{array} } \right]
+ {\epsilon}^N.
\end{equation}
\vspace{-0.8cm}
\end{figure*}

We are now ready to apply  \cite[Th. 17]{Tian_arxiv}: 
 a sufficient condition for  $\TE,$ $\CE,$ $\OG,$ $\FY$, and $\NT^2$ to vanish  at an R-linear rate is $\rho({G})<1$. By \cite[Lemma~23]{Tian_arxiv},
 this is equivalent to requiring  $p_{G}(1)>0$, where  $p_{G}(z)$ is the characteristic polynomial  of ${G}$, This leads to the following condition:\vspace{-0.2cm}
\begin{align*}
\hspace{-0.3cm}	& \mathcal{B}(\lambda;\gamma) \\
	& =  \left( \frac{72\, I^2\,C_1^2 \, l^2 \,\gamma^2}{(\sqrt{\lambda} -\rho)^2}   + \frac{24 \,I \,l^2\, C_2^2 \gamma^2}{(\sqrt{\lambda}-\rho)^2} +  \frac{216 \,I^2\, C_1^2\,C_2^2\, l^2\,\gamma^2}{(\sqrt{\lambda} -\rho)^4}    \right)  \\
	&  \cdot \left( \frac{1}{2\epsilon \left( \tilde{\mu} - \frac{\epsilon}{2}- \frac{\gamma L}{2}  \right) }      +  \frac{C_3(\gamma) C_4(\gamma)}{\zeta(\gamma)  \left( \lambda - \zeta(\gamma)\right)}  \frac{1}{\gamma \left( \tilde{\mu} - \frac{\epsilon}{2}- \frac{\gamma L}{2}  \right)  }  \right)  < 1.
\end{align*}

It is not hard to see that  $\mathcal{B}(\lambda; \gamma)$ is continuous at $\lambda =1$, for any $\gamma \in (0, \frac{2 \tilde{\mu} -\epsilon}{L})$. Therefore, as long as
\begin{equation}\label{eq:b_1_gamma}
	\begin{aligned}
		&  \mathcal{B}(1;\gamma) = \left(\frac{72\, I^2\,C_1^2 \, l^2 }{(1 -\rho)^2}   + \frac{24 \,I \,l^2\, C_2^2}{(1-\rho)^2} +  \frac{216 \,I^2\, C_1^2\,C_2^2\, l^2}{(1 -\rho)^4}       \right) \gamma \cdot \\
		&  \left( \frac{\gamma}{2\epsilon\left( \tilde{\mu} - \frac{\epsilon}{2}- \frac{\gamma L}{2}  \right)   }      +  \frac{C_3(\gamma) C_4(\gamma)}{\zeta(\gamma)  \left( 1 - \zeta(\gamma)\right)}  \frac{1}{\left( \tilde{\mu} - \frac{\epsilon}{2}- \frac{\gamma L}{2}  \right) }  \right) < 1,
	\end{aligned}
\end{equation}
there will exist some $\lambda \in (0,1)$ such that $\mathcal{B}(\lambda;\gamma)<1.$

We show now that $ \mathcal{B}(1;\gamma)  < 1$, for sufficiently small $\gamma$. We only need to prove boundedness of the following quantity when $\gamma\downarrow 0$:
$$ \frac{C_3(\gamma) C_4(\gamma)}{\zeta(\gamma) \left( 1- \zeta(\gamma)\right)}  = \underbrace{\frac{  c_6  (\tilde{\mu} - \frac{ \epsilon}{2}- \frac{\gamma L}{2} ) +  \frac{c_7}{2 \epsilon}   }{ \left(  c_7 +  \tilde{\mu} - \frac{ \epsilon}{2}- \frac{\gamma L}{2}   \right)   \zeta(\gamma)^{K_1+1}}}_{\triangleq h(\gamma)} \cdot \frac{\gamma}{1-\zeta(\gamma)}.$$
It is clear that $h(\gamma)$ is right-continuous at $0 $ and thus $\lim_{\gamma \downarrow 0} h(\gamma)<\infty$.  Hence,  it is left to check that $\frac{\gamma}{1-\zeta(\gamma)}$ is bounded when  $\gamma \downarrow 0.$
According to L'H$\hat{\text{o}}$pital's rule,
\begin{align*}
	& \lim_{\gamma\, \downarrow 0}  \frac{\gamma}{1-\zeta(\gamma)}
	= -\frac{K_1}{\left( 1 - \left(1- \sigma(\gamma) \right) \eta \right)^{\frac{1}{K_1}-1}} \frac{1}{\eta \sigma'(\gamma)}\bigg\vert_{\gamma = 0} =  \frac{K_1 \left( c_7+  \tilde{\mu}- \frac{\epsilon}{2}  \right)}{ \eta \left( \tilde{\mu}- \frac{\epsilon}{2}  \right) } < \infty.
\end{align*}

Finally, we prove  that all $(x_i^k)_{k \geq \bar{k}}$   converge  linearly to some $x^\star$. By the definition of the augmented matrix $\augX$ and the update~\eqref{eq:chi}, we have: for $k \geq \bar{k}$,
\begin{align*}
	& \| h^{k+1} - h^{k}\|  = \| (\W - I) h^k + \gamma \augY^k\|\\
	& \leq \| (\W - I) (\augX^{k}- \bm{1} \cdot (\CV^{k})^\top)\| + \gamma \|\augY^k\| \leq 3 \CE^k + \gamma \NT^k.
\end{align*}
Since both $\CE^k$ and $\NT^k$ are $\mathcal{O}\left( (\sqrt{\lambda})^k \right)$, $\sum_{k=0}^{\infty}\|h^{k+1} - h^{k} \| < + \infty$;   thus $\{\augX^k\}_{k \in \mathbb{N}}$ is Cauchy and converges to some $\mathbf{1} (x^\star )^\top$, implying all $x_i^k$ converges to $x^\star$.  We prove next that $x_i^k$ converges to $x^\star$ R-linearly. For any $k' > k \geq \bar{k}$, we have $\|\augX^k - \augX^{k'}\| \leq \sum_{t = k}^{k' - 1} \| \augX^t - \augX^{t+1}\|  \leq \sum_{t = k}^{k' - 1} \left(3 \CE^t + \gamma \NT^t\right) = \mathcal{O} \left( (\sqrt{\lambda})^k \right)$.
Taking $k' \to \infty$ completes the proof.


\section{Proof of Theorem~\ref{thm:sublinear_const}}\label{sec:pf_2}
In this section we prove the sublinear convergence of \ASYSCA/. We organize the proof in two steps. Step 1: we prove     $\sum_{k=0}^{\infty} (\NT^k)^2 < +\infty$ by showing the descent of a properly constructed Lyapunov function. This function represents a major novelty of our analysis--see Remark \ref{rmk:Lyapunov}. Step 2:  we connect the decay rate of $\NT^k$  and that of the merit function $M_F(x^k)$.\vspace{-0.2cm}

\subsection{Step 1:  $\NT^k$ is square summable}
In   Sec.~\ref{sec:prelim} we have shown that the weighted average of the local variables $\CV$ evolves according to the   dynamics   Eq.~\eqref{eq:avg_dynamic_x}.
Using  $\CV^0 = {\bd{{\psi}}^0}^{\top}{\augX}^0$,  \eqref{eq:avg_dynamic_x} 
can be rewritten recursively as
\begin{align}
	\CV^{k+1} = \CV^k + \gamma {\psi^k}^\top \Delta \augX^k = \CV^k + \gamma \psi_{i^k}^k \dx^k.
\end{align}

Invoking the descent lemma 
while recalling  $\NT^k = \norm{\dx^k}$,
yields \vspace{-0.2cm}
\begin{equation}\label{descent}
	\begin{aligned}
		&  F(\CV^{k+1}) \leq   F(\CV^{k}) + \gamma \psi_{i^k}^k \nabla F(\CV^{k})^\top \dx^k + \frac{L(\gamma \psi_{i^k}^k)^2}{2} (\NT^k)^2 \\%
		& \overset{\eqref{eq:mini_prin}}{\leq } 
		F(\CV^{k}) + \frac{L \gamma^2}{2} (\NT^k)^2 - \gamma\psi_{i^k}^k \left(\tilde{\mu}  (\NT^k)^2+G(\tl{x}_{i^k}^k)  - G(x_{i^k}^k) \right) \\
		&\qquad  + \gamma \psi_{i^k}^k \left(\nabla F(\CV^{k})-I\avg{g}^k \right)^\top \dx^k    + \gamma\psi_{i^k}^k  \left(I\avg{g}^k-Iy_{i^k}^k \right)^\top \dx^k \\%
		& \leq  F(\CV^{k}) + \frac{L \gamma^2}{2} (\NT^k)^2 - \gamma\psi_{i^k}^k \left(\tilde{\mu}  (\NT^k)^2+G(\tl{x}_{i^k}^k)  - G(x_{i^k}^k) \right)   + \gamma l \sqrt{I} \CE^k \, \NT^k + \gamma  I \TE^k \, \NT^k.
	\end{aligned}
\end{equation}

Introduce the Lyapunov function
\begin{align}\label{eq:noncon_lyapu}
	L^k \triangleq  F(\CV^{k}) +{{\psi}^k}^{\top}\widetilde{G}(\augX^k)
\end{align}
where  $\widetilde{G}: \mathbb{R}^{S \times n} \to \mathbb{R}^S$ is defined as $\widetilde{G}(\augX) \triangleq [G(\augx_1),\cdots,G(\augx_S)]^{\top}$, for $\augX = [\augx_1,\cdots, \augx_S]^\top \in \mathbb{R}^{S \times n}$.
\begin{remark}\label{rmk:Lyapunov}
Note that $L^k$ contrasts with the functions used in the literature of distributed algorithms to study convergence in the nonconvex setting. Existing choices either cannot deal with asynchrony \cite{YingMAPR,sun2019convergence} (e.g. the unbalance in the update frequency of the agents and the use of outdated information)   or cannot handle nonsmooth functions in the objective and constraints \cite{Tian_arxiv}.  A key feature of $L^k$ is to  combine current and past information throughout suitable dynamics, $\{\CV^k\}$, and weights  averaging via $\{{\psi}^k\}$. \end{remark}

Using the dynamics of $\augX^k$ as in \eqref{eq:chi}, 
we get
$$	\widetilde{G}(\augX^{k+1}) \preccurlyeq  \W^k \left((1-\gamma)\widetilde{G}({\augX}^k)+\gamma \widetilde{G}(\augX^k +\augY^k) \right).$$
where we used the convexity of $G$ and the row-stochasticity of $\W^k$.
Thus
\begin{align*}
	& {{\psi}^{k+1}}^{\top}\widetilde{G}(\augX^{k+1}) \\
	& \leq {{\psi}^{k+1}}^{\top}\W^k \left((1-\gamma)\widetilde{G}({\augX}^k)+\gamma \widetilde{G}(\augX^k +\augY^k) \right)\\
	& = {{\psi}^k}^{\top} \left((1-\gamma)\widetilde{G}({\augX}^k)+\gamma \widetilde{G}(\augX^k +\augY^k) \right),
\end{align*}
where in the last equality we used ${{\psi}^{t+1}}^{\top} \W\spe{t} = {{\psi}^t}^{\top}$ [cf.~\eqref{eq:prod_mat}].
Therefore,
\begin{align*}
	& \gamma \psi_{i^k}^k \left(G(x_{i^k}^{k})- G(\tl{x}_{i^k}^{k}) \right) \\
	& = \gamma \left({{\psi}^k}^{\top} \widetilde{G}(\augX^k) - {{\psi}^k}^{\top} \widetilde{G}(\augX^k +\augY^k) \right) \\
	& \leq  {{\psi}^k}^{\top}\widetilde{G}(\augX^k)-{{\psi}^{k+1}}^{\top}\widetilde{G}(\augX^{k+1}).
\end{align*}

Combining the above inequality with (\ref{descent}), we get
\begin{equation}\label{lyapunov_eq}
	\begin{aligned}
		& L^{k+1}
		\leq  {} L^k  -  \eta \tilde{\mu}  (\NT^k)^2  \gamma +\frac{L}{2} (\NT^k)^2 \gamma^2 + \frac{\epsilon_1}{2}  l^2 I (\CE^k)^2   +  \frac{1}{2 \epsilon_1} \gamma^2 (\NT^k)^2   +  \frac{\epsilon_2}{2}I^2 (\TE^k)^2 + \frac{1}{2\epsilon_2} \gamma^2 (\NT^k)^2  \\
		& =   L^k   -   (\NT^k)^2  \gamma \left( \eta \tilde{\mu} - \gamma \left( \frac{L}{2} +  \frac{1}{2 \epsilon_1} +\frac{1}{2 \epsilon_2} \right)\right)   + \frac{\epsilon_1}{2}  l^2 I (\CE^k)^2 +    \frac{\epsilon_2}{2}I^2 (\TE^k)^2 \\
		& \leq {} L^0  - \sum_{t=0}^k (\NT^t)^2  \gamma \left( \eta \tilde{\mu} - \gamma \left( \frac{L}{2} +  \frac{1}{2 \epsilon_1} +\frac{1}{2 \epsilon_2} \right)\right)    + \frac{\epsilon_1}{2}  l^2 I  \sum_{t=0}^k {\CE^t}^2 + \frac{\epsilon_2}{2}I^2  \sum_{t=0}^k {\TE^t}^2 .
	\end{aligned}
\end{equation}
To bound the last two terms in \eqref{lyapunov_eq}, we apply Proposition~\ref{lm:sqr_sum}:\vspace{-0.2cm}
\begin{align*}\label{eq:lyapunov}
	\begin{split}
		& \quad L^{k+1} \leq L^0 - \sum_{t=0}^k (\NT^t)^2  \gamma \bigg( \eta \tilde{\mu} - \gamma \bigg( \frac{L}{2} +  \frac{1}{2 \epsilon_1} +\frac{1}{2 \epsilon_2}   + \frac{\epsilon_1}{2}  l^2 I \varrho_{x}  + \frac{\epsilon_2}{2}I^2 \varrho_{y}  \bigg)\bigg) + \frac{\epsilon_1}{2}  l^2 I  c_{x} + \frac{\epsilon_2}{2}I^2  c_{y}\\
		& = L^0 - \sum_{t=0}^k (\NT^t)^2  \gamma \left( \eta \tilde{\mu} - \gamma \left( \frac{L}{2}  + \sqrt{l^2 I \varrho_{x}} +\sqrt{I^2 \varrho_{y}} \right)\right)  + \frac{ l^2 I  c_{x}}{2\sqrt{l^2 I \varrho_{x}}}  + \frac{I^2  c_{y}}{2\sqrt{I^2 \varrho_{y}}},
	\end{split}
\end{align*}
where in the last equality we set $\epsilon_1 = 1/\sqrt{l^2 I \varrho_{x}}$ and $\epsilon_2 =1/\sqrt{I^2 \varrho_{y}}$.
Note that
\begin{align*}
& L^k = F(\CV^{k}) +{{\psi}^k}^{\top}\widetilde{G}(\augX^k) \\
& \geq F(\CV^{k}) + G({{\psi}^k}^{\top}\augX^k) = U(\CV^{k}) \geq U^\ast,\end{align*}
for all $k \in \mathbb{N}_+$. Thus, for sufficiently small  $\gamma$, such that
\begin{equation}\label{eq:noncon_step}
	\gamma  \leq \bar{\gamma}_{ncvx}\triangleq  \eta \tilde{\mu} \left( L  + 2\sqrt{l^2 I \varrho_{x}} + 2\sqrt{I^2 \varrho_{y}} \right)^{-1},
\end{equation}
we can obtain the following bound
\begin{align}\label{eq:61}
	\sum_{t=0}^\infty (\NT^t)^2 \leq \frac{ 2L^0 - 2U^*+  \frac{ l^2 I  c_{x}}{\sqrt{l^2 I \varrho_{x}}}  + \frac{I^2  c_{y}}{\sqrt{I^2 \varrho_{y}}}}{\gamma \eta \tilde{\mu}}.
\end{align}

\subsection{Step 2: $M_F (x^k)$ vanishes at sublinear rate}
In this section we establish the connection between $M_F (x^k)$ and $\NT^k$, $\CE^k$, and $\TE^k$.

Invoking Lemma~\ref{lm:resi} we can bound $\|\bar{x}^k - \prox_G (\bar{x}^k - \nabla F (\bar{x}^k))\|$ as\vspace{-0.2cm}
\[
\begin{aligned}
& \|\bar{x}^k - \prox_G (\bar{x}^k - \nabla F (\bar{x}^k))\|^2 \\
\leq {}& 3 \|\bar{x}^k - x_{i^k}^k\|^2 +3 \|x_{i^k}^k - \prox_G(x_{i^k}^k - \nabla F (x_{i^k}^k))\|^2  + 3 \|\prox_G(x_{i^k}^k - \nabla F (x_{i^k}^k)) - \prox_G(\bar{x}^k - \nabla F (\bar{x}^k))\|^2\\
\overset{(*)}{ \leq } & 3 \|\bar{x}^k - x_{i^k}^k\|^2 +3 \|x_{i^k}^k - \prox_G(x_{i^k}^k - \nabla F (x_{i^k}^k))\|^2  + \|x_{i^k}^k - \nabla F (x_{i^k}^k) - (\bar{x}^k - \nabla F (\bar{x}^k))\|^2\\
\leq  & (5 + 2 L^2) \|\bar{x}^k - x_{i^k}^k\|^2 +3 \|x_{i^k}^k - \prox_G(x_{i^k}^k - \nabla F (x_{i^k}^k))\|^2 \\
\leq & 4 (5 + 2 L^2) (\CE^k)^2 +3 \|x_{i^k}^k - \prox_G(x_{i^k}^k - \nabla F (x_{i^k}^k))\|^2 \\
\leq & 4 (5 + 2 L^2) (\CE^k)^2 +3\left(  4 \left( 1 + (l + \tilde{l})^2 \right) (\NT^k)^2 + 5 \FY^k \right),
\end{aligned}
\]
where  (*) follows from the nonexpansiveness of a proximal operator. Further applying Lemma~\ref{lm:sqr_sum} and~\eqref{eq:tracking_bound}, yields:
\begin{align*}
	\begin{split}
		&\sum_{t=0}^{k}M_F (x^t) \\
		\leq & \sum_{t=0}^k  \|\bar{x}^k - \prox_G (\bar{x}^k - \nabla F (\bar{x}^k))\|^2 + \sum_{t=0}^k (\CE^t)^2 \\
		\leq & \sum_{t=0}^k \left( (21 + 8L^2) (\CE^t)^2 +  3 \left( 4 (1+ (l + \tilde{l})^2) (\NT^t)^2 + 5 \FY^t \right) \right)\\
		\leq &  \sum_{t=0}^k \left( (21 + 8L^2)(\CE^t)^2 + 15 \left( 8I l^2 (\CE^t)^2 + 2 I^2 (\TE^t)^2\right) \right) 	 + 12 (1+ (l + \tilde{l})^2) \sum_{t=0}^k (\NT^t)^2\\
		\leq & \left(21 + 8L^2 + 120  I l^2\right) \left( c_{x} + \varrho_{x} \sum_{t=0}^{k} \gamma^2 (\NT^t)^2 \right)   + 30  I^2 \left(  c_{y} + \varrho_{y} \sum_{t=0}^{k} \gamma^2 (\NT^t)^2  \right) + 12 (1+ (l + \tilde{l})^2) \sum_{t=0}^k (\NT^t)^2\\
		= & \big( \left(21 + 8L^2 + 120 I l^2\right) \varrho_{x} \gamma^2+ 30 \kappa^2 I^2 \varrho_{y} \gamma^2   	+ 12  (1+ (l + \tilde{l})^2) \big) \\
		&  \sum_{t=0}^k (\NT^t)^2  + \left(21 + 8L^2 + 120  I l^2\right) c_x  + 30 \kappa^2 I^2 c_y\\
		\overset{\eqref{eq:61}}{\leq }& \big(  \left(21 + 8L^2 + 120 I l^2\right) \varrho_{x} \gamma^2+ 30 \kappa^2 I^2 \varrho_{y} \gamma^2   + 12  (1+ (l + \tilde{l})^2) \big) \left(\frac{ 2L^0 - 2U^* +  \frac{ l^2 I  c_{x}}{\sqrt{l^2 I \varrho_{x}}}  + \frac{I^2  c_{y}}{\sqrt{I^2 \varrho_{y}}}}{ \gamma \eta \tilde{\mu}}\right) \\
		& + \left(21 + 8L^2 + 120  I l^2\right) c_x + 30 \kappa^2 I^2 c_y \triangleq B_{opt},
	\end{split}
\end{align*}
where     $\varrho_{x}$ and $\varrho_{y}$ are defined in Lemma~\ref{lm:sqr_sum}.

Let $T_\delta = \inf \{ k \in \mathbb{N} \,|\, M_F(x^k) \leq \delta\}$. Then it holds:
$
T_\delta \cdot \delta < \sum_{k=0}^{T_\delta - 1}  M_F(x^k) \leq B_{opt}
$ and thus $T_\delta = \mathcal{O} (B_{opt} /\delta)$.
 	\section{Conclusion}\vspace{-0.1cm}
	We proposed \ASYSCA/, an asynchronous decentralized method for multiagent convex/nonconvex composite minimization problems over (di)graphs. The algorithm employs SCA techniques and is robust against agents' uncoordinated activations and use of outdated information (subject to arbitrary  but bounded delays). For convex (not strongly convex) objectives satisfying the LT error bound condition,   \ASYSCA/ achieves R-linear convergence rate while
	 sublinear convergence is   established  for nonconvex objectives.\vspace{-0.3cm}

\appendix\vspace{-0.1cm}
\section{Proof of Lemma~\ref{lm:tracking_err}}\label{ap:lm:tracking_err}
	Applying \cite[Th.~6]{Tian_arxiv} with the identifications:
	$	\epsilon^t = \nabla f_{i^t}(x_{i^t}^{t+1}) - \nabla f_{i^t}(x_{i^t}^{t}) $
	and
	\begin{align*}
& \mathfrak{m}^{k}_z = \sum_{i=1}^I {{z}_i^0} + \sum_{t=0}^{k-1} \epsilon^t   = \sum_{i=1}^I \nabla f_i (x_i^0) + \sum_{t=0}^{k-1} \left( \nabla f_{i^t}(x_{i^t}^{t+1}) - \nabla f_{i^t}(x_{i^t}^{t}) \right) \\
& \overset{(*)}{=}  I \cdot  \underbrace{\frac{1}{I}\sum_{i=1}^I \nabla f_i (x_i^k)}_{\triangleq \bar{g}^k},
	\end{align*}
	we arrive at
	$\TE^{k+1}  \leq   C_1 \left( \rho^k \norm{{g}\spe{0}} + \sum_{l = 0}^k \rho^{k-l} \|\epsilon^l\|\right),
	$
	where in $(*)$ we have used $x_j^{t+1} = x_j^t$ for $j\neq i^t.$
	The rest of the proof follows the same argument as in~\cite[Prop.~18]{Tian_arxiv}.\hfill $\square$

\section{Proof of Lemma~\ref{lem:bound_T}}\label{ap:lem:bound_T}
  	  Using  \eqref{eq:mini_prin}, we have: for any $\epsilon > 0$,
	\begin{align*}
		\begin{split}
			T_1 = & \left( \nabla F(x_{i^k}^{k}) \pm  Iy_{i^k}^k\right)^\top \left( \widetilde{x}_{i^k}^k - x_{i^k}^k\right)  +  G(\widetilde{x}_{i^k}^k) - G(x_{i^k}^{k}) \\
			\leq & - \tilde{\mu} \cdot \norm{\dx^k}^2 + \frac{1}{2  \epsilon}\, \FY^k +  \frac{ \epsilon}{2}\, \norm{\dx^k}^2.
		\end{split}
	\end{align*}
	Next we prove~\eqref{eq:bound_T_1}. For any $z \in \mathcal{K}$, let $x^*(z) \in \mathcal{P}_{\mathcal{K}^*} (z)$.
	By the Mean Value Theorem, there exists $\xi^k = \beta \, x^*(x_{i^k}^{k}) + (1-\beta) v_{i^k}^{k+1}$, with  $\beta \in (0,1)$, such that
	\begin{align}\label{eq:MVT}
		\begin{split}
		&	U(v_{i^k}^{k+1}) - U(x^*(x_{i^k}^{k}))
			=   \nabla F(\xi^k)^\top \big(v_{i^k}^{k+1} - x^*(x_{i^k}^{k}))   + G(v_{i^k}^{k+1} \big) - G(x^*(x_{i^k}^{k}) ).
		\end{split}
	\end{align}
	To deal with the inner product term, we invoke the algorithmic update~\eqref{eq:sca} and the first order optimality condtion~\eqref{eq:FOC_general} (with $z = x^*(x_{i^k}^{k}) $):
	\begin{align*}
		\begin{split}
			& \left( \nabla \widetilde{f}_{i^k}(\tl{x}_{i^k}^k;x_{i^k}^k)+Iy_{i^k}^k-\nabla f_{i^k}(x_{i^k}^k) \right)^\top \left( v_{i^k}^{k+1} - x^*(x_{i^k}^{k}) \right)  \\
			= & \left( \nabla \widetilde{f}_{i^k}(\tl{x}_{i^k}^k;x_{i^k}^k)+Iy_{i^k}^k-\nabla f_{i^k}(x_{i^k}^k) \right)^\top \big(\tl{x}_{i^k}^k - x^*(x_{i^k}^{k})   + (\gamma-1) (\tl{x}_{i^k}^k - x_{i^k}^k) \big)  \\
			\leq & - (1 - \gamma) \left( \nabla \widetilde{f}_{i^k}(\tl{x}_{i^k}^k;x_{i^k}^k)+Iy_{i^k}^k-\nabla f_{i^k}(x_{i^k}^k) \right)^\top  (\tl{x}_{i^k}^k - x_{i^k}^k) + G(x^*(x_{i^k}^{k}) ) - G(\tl{x}_{i^k}^k).
		\end{split}
	\end{align*}
	Therefore
	\begin{equation}\label{eq:ik_desc1} 		\begin{aligned}
			& U(v_{i^k}^{k+1}) - U(x^*(x_{i^k}^{k})) \\
			= & \left( \nabla F(\xi^k)   \pm \big(\nabla \widetilde{f}_{i^k}(\tl{x}_{i^k}^k;x_{i^k}^k)+Iy_{i^k}^k-\nabla f_{i^k}(x_{i^k}^k) \big) \right)^\top   \big(v_{i^k}^{k+1} - x^*(x_{i^k}^{k})) + G(v_{i^k}^{k+1} \big) - G(x^*(x_{i^k}^{k}) )  \\
			\leq &  \left( \nabla \widetilde{f}_{i^k}(\tl{x}_{i^k}^k;x_{i^k}^k)+Iy_{i^k}^k-\nabla f_{i^k}(x_{i^k}^k) \right) (v_{i^k}^{k+1} - x^*(x_{i^k}^{k}))   +  G(v_{i^k}^{k+1} ) - G(x^*(x_{i^k}^{k}) ) \\
			&+
			\underbrace{
			 \bigg( \norm{\nabla F(\xi^k) - \nabla F(x_{i^k}^k)} + \norm{\nabla F(x_{i^k}^k) -Iy_{i^k}^k } 
			 + \norm{\nabla \widetilde{f}_{i^k}(\tl{x}_{i^k}^k;x_{i^k}^k) -\nabla f_{i^k}(x_{i^k}^k) } \bigg)  \norm{v_{i^k}^{k+1} - x^*(x_{i^k}^{k})}
			}_{R_1}  \\
			\leq &- (1 - \gamma) \left( \nabla \widetilde{f}_{i^k}(\tl{x}_{i^k}^k;x_{i^k}^k)+Iy_{i^k}^k-\nabla f_{i^k}(x_{i^k}^k) \right)^\top   (\tl{x}_{i^k}^k - x_{i^k}^k)
			-  (1-\gamma) (G(\tl{x}_{i^k}^k) - G(x_{i^k}^k) )
			+ R_1,
		\end{aligned}
	\end{equation}
	where in the last inequality we have used the convexity of $G$.
	We thus arrive at the following bound on $T_1$:
	\begin{equation}\label{eq:inner_prod_bound}
		\begin{aligned}
			T_1  =  & \left( \nabla F(x_{i^k}^{k}) \pm \left( \nabla \widetilde{f}_{i^k}(\tl{x}_{i^k}^k;x_{i^k}^k)+Iy_{i^k}^k-\nabla f_{i^k}(x_{i^k}^k) \right)\right)^\top   \left( \widetilde{x}_{i^k}^k - x_{i^k}^k\right)  +  G(\widetilde{x}_{i^k}^k) - G(x_{i^k}^{k}) \\
			\leq & - \frac{1}{1 - \gamma} \left(U(v_{i^k}^{k+1}) - U(x^*(x_{i^k}^{k}))\right) +\frac{1}{1 - \gamma}  \cdot R_1 \\
			 + &\underbrace{
			 \bigg( \norm{\nabla \widetilde{f}_{i^k}(\tl{x}_{i^k}^k;x_{i^k}^k) -\nabla f_{i^k}(x_{i^k}^k)  } 
			+  \norm{  \nabla F(x_{i^k}^{k})  - Iy_{i^k}^k }   \bigg) \norm{ \dx^k} }_{R_2}.
		\end{aligned}
	\end{equation}
	It remains to bound the remainder terms $R_{1}$ and $R_2$.
	Note that
	\begin{align*}
		& \norm{v_{i^k}^{k+1} - x^*(x_{i^k}^{k})} = \norm{v_{i^k}^{k+1} \pm x_{i^k}^k - x^*(x_{i^k}^{k})}  \leq \text{dist} ( x_{i^k}^k, \mathcal{K}^*) + \gamma \,\norm{\dx^k}, \\
		& \norm{\xi^k - x_{i^k}^k}  \leq \beta \norm{x_{i^k}^{k} - x^*(x_{i^k}^{k})} + (1-\beta)\norm{v_{i^k}^{k+1} - x_{i^k}^k}   \leq \text{dist} ( x_{i^k}^k, \mathcal{K}^*) + \gamma \, \norm{\dx^k}.
	\end{align*}
	Applying Lemma~\ref{lm:resi}  and Corollary~\ref{cor:err_bound}, the following holds: for $k \geq \bar{k}$,
	\begin{align*}
		\left( \text{dist} ( x_{i^k}^k, \mathcal{K}^*) \right)^2 \leq  \kappa^2 \norm{x_{i^k}^k - \text{prox}_G (x_{i^k}^k -\nabla F(x_{i^k}^k))}^2  \leq \kappa^2 \left( 4 \left( 1 + (l + \tilde{l})^2 \right) \norm{\dx^k}^2 + 5 \FY^k \right).
	\end{align*}
	With the above inequalities and using the fact that $\gamma \leq 1$ we can bound $R_1$ as\vspace{-0.2cm}
	\begin{align*}
		\begin{split}
		\!\!\!\!\!	R_1 \leq & \norm{\nabla F(\xi^k) - \nabla F(x_{i^k}^k)}^2 +  \norm{\nabla F(x_{i^k}^k) -Iy_{i^k}^k }^2  +  \norm{\nabla \widetilde{f}_{i^k}(\tl{x}_{i^k}^k;x_{i^k}^k) -\nabla f_{i^k}(x_{i^k}^k) }^2  +  \norm{v_{i^k}^{k+1} - x^*(x_{i^k}^{k})}^2\\
			\leq &  L^2 \norm{\xi^k- x_{i^k}^k}^2 +  \FY^k +  (l + \widetilde{l})^2 \norm{\dx^k}^2  + 2 \, \text{dist} ( x_{i^k}^k, \mathcal{K}^*)^2 + 2  \gamma^2 \,\norm{\dx^k}^2\\
			\leq & \left( 2 L^2 + 2 \right)\text{dist} ( x_{i^k}^k, \mathcal{K}^*)^2 +  \FY^k   + \left(  2 L^2 \gamma^2 + 2  \gamma^2 +  (l + \widetilde{l})^2\right) \norm{\dx^k}^2 \\
			\leq & \underbrace{\left( 8  \kappa^2 (L^2 + 1) \big( 1 + (l + \tilde{l})^2 \big) + 2 L^2  + 2   +  (l + \widetilde{l})^2 \right)}_{c_3} \norm{\dx^k}^2    + \underbrace{\left( 10 \kappa^2 (L^2 + 1) + 1 \right)}_{c_4}
			\FY^k.
		\end{split}
	\end{align*}
	Similarly, $R_2$ can be bounded as
	\begin{align*}
		\begin{split}
			& R_2 \leq \norm{\nabla F(x_{i^k}^k) -Iy_{i^k}^k }^2 +  \norm{\nabla \widetilde{f}_{i^k}(\tl{x}_{i^k}^k;x_{i^k}^k) -\nabla f_{i^k}(x_{i^k}^k) }^2   + \norm{\dx^k}^2 \leq  \FY^k + \left( 1+ (l + \widetilde{l})^2 \right)\norm{\dx^k}^2.
		\end{split}
	\end{align*}
	Substituting the bounds of $R_1$ and $R_2$ in  \eqref{eq:inner_prod_bound} yields
	\begin{align*}
		\begin{split}
			& T_1 \leq  - \frac{1}{1 - \gamma} \left(U(v_{i^k}^{k+1}) - U(x^*(x_{i^k}^{k}))\right) +\FY^k  +\frac{1}{1 - \gamma}  \cdot \left( c_3 \norm{\dx^k}^2 + c_4  \FY^k \right)  + \left( 1+ (l + \widetilde{l})^2 \right)\norm{\dx^k}^2\\
			& \leq  - \frac{1}{1 - \gamma} \left(U(v_{i^k}^{k+1}) - U(x^*(x_{i^k}^{k}))\right)   + \frac{1}{1-\gamma} \left( c_5 \norm{\dx^k}^2 + c_6  \FY^k \right),
		\end{split}
	\end{align*}
	where \vspace{-0.3cm}
	\begin{align}\label{eq:const_expression}
		\hspace{-0.3cm}\begin{split}
			c_5  & \triangleq  8  \kappa^2 (L^2 + 1) \big( 1 + (l + \tilde{l})^2 \big)  + 2 L^2   + 2   +  (l + \widetilde{l})^2 + 1+ (l + \widetilde{l})^2,\\
			c_6 & \triangleq 10 \kappa^2 (L^2 + 1) + 2. 
		\end{split}\vspace{-1cm}
	\end{align}
\hfill $\square$
\section{Proof of Lemma~\ref{lem:prod_mat_bound}}\label{ap:pf_prod_mat_bound}
We know from Lemma~\ref{lm:Wscramb}~(ii): for all $k \geq 0$, all elements in the first $I$ columns of $\W^{k + K_1 -1:k}$ are no less than $\eta$.
	Since $\W^{k+K_1-1:k}\D^k$ is nonnegative, we have for each $i\in[S]$
	\begin{align*}
	& \norm{\W^{k+K_1-1:k}\D^k }_\infty \leq \max_{i=1,\ldots,S} \left\{  1 -  \big(1- \sigma(\gamma) \big) \W^{k+K_1-1:k}_{i,i^k}  \right\} \\
	& \leq  1 - \big(1- \sigma(\gamma) \big) \eta .
	\end{align*}

	On the other hand, because ${0} \preccurlyeq \D^k \preccurlyeq {I}\,$ for all $k$, we know
	$\label{eq:w_non_in}   \big(\W \D \big)^{m:k} \preccurlyeq \W^{m:k} \,\D^k, \,\,  \forall m\geq k. $
	Thus
	\begin{equation*}
		\norm{  (\W \D)^{k+K_1-1:k}  }_\infty \leq  \norm{\W^{k+K_1-1:k} \D^k}_\infty \leq 1 - \left(1- \sigma(\gamma) \right) \eta.
	\end{equation*}
	Finally for any $k\geq \ell  \geq 0,$
	\[
	\begin{aligned}
	& \norm{ (\W \D)^{k:\ell}  }_\infty \leq  \Bigg( \prod_{t = 1}^{\floor{\frac{k+1-\ell}{K_1}}}  \norm{\left(\W \D  \right)^{\ell+t\, K_1 -1:\ell+(t-1)K_1}}_\infty \Bigg) \\
	& ~~~~ \norm{\left(\W \D  \right)^{k:\ell+\floor{\frac{k+1-\ell}{K_1}}K_1} }_\infty\\
	&  \leq  \prod_{t = 1}^{\floor{\frac{k+1-\ell}{K_1}}}  \norm{\left(\W \D  \right)^{\ell+t\, K_1 -1:\ell+(t-1)K_1}}_\infty    \\
	& \leq  \left( 1 - \big(1- \sigma(\gamma) \big) \eta \right)^{\floor{\frac{k+1-\ell}{K_1}}} \leq \left( 1 - \big(1- \sigma(\gamma) \big) \eta \right)^{\frac{k-\ell}{K_1} -1 }  \\
	&=\frac{1}{1 - \big(1- \sigma(\gamma) \big) \eta}  \left(  \left( 1 - \big(1- \sigma(\gamma) \big) \eta \right)^{\frac{1}{K_1}}  \right)^{k-\ell},
	\end{aligned}
	\]
	where we defined $\prod_{t = 1}^0 x^t= 1$, for any sequence $\{x^t\}.$


\bibliographystyle{plain}
\bibliography{reference}

\end{document}